\documentclass[12pt]{article}
\usepackage{amsmath}
\usepackage{amsmath,amssymb, amsthm, latexsym, amsfonts, epsfig, color,graphicx,}
\usepackage{mathrsfs}
\usepackage{indentfirst}
\usepackage{chngcntr}
\usepackage{bbm}
\usepackage[blocks]{authblk} 
\usepackage{newtxmath} 
\usepackage[colorlinks=True,linkcolor=blue,anchorcolor=blue,citecolor=blue,CJKbookmarks=true]{hyperref}
\usepackage{geometry}
\usepackage{verbatim}
\usepackage{amsfonts}
\DeclareMathOperator{\supp}{supp}

\begin{document}
\renewcommand{\a}{\alpha}
\newcommand{\D}{\Delta}
\newcommand{\ddt}{\frac{d}{dt}}
\counterwithin{equation}{section}
\newcommand{\e}{\epsilon}
\newcommand{\eps}{\varepsilon}
\newtheorem{theorem}{Theorem}[section]
\newtheorem{proposition}{Proposition}[section]
\newtheorem{lemma}[theorem]{Lemma}
\newtheorem{remark}[theorem]{Remark}
\newtheorem{example}{Example}[section]
\newtheorem{definition}{Definition}[section]
\newtheorem{corollary}[theorem]{Corollary}
\makeatletter
\newcommand{\rmnum}[1]{\romannumeral #1}
\newcommand{\Rmnum}[1]{\expandafter\@slowromancap\romannumeral #1@}
\makeatother

\title{\bf Fractal Uncertainty and Quantitative Uniqueness for the Fourier–Bessel Transform}
\author{
    Xingyu Zhao, Longben Wei, Zhiwen Duan
}
\date{}
\maketitle
\begin{abstract} 
We establish quantitative uniqueness and a fractal uncertainty principle for the Fourier–Bessel transform. In arbitrary dimension, we prove a quantitative uniqueness estimate on relatively dense sets for functions whose Fourier–Bessel transforms decay according to a quasi-analytic weight. In dimension one, if \(X\subset[0,1]\) and \(Y\subset[a,a+h^{-1}]\) are \(\delta\)-regular on the relevant scales, then, for \(a\geq a_0h^{-1}\),

\[
\operatorname{supp}\mathcal H_\nu f\subset Y
\quad\Longrightarrow\quad
\|\mathbf 1_Xf\|_{L^2_\nu}
\leq Ch^\beta\|f\|_{L^2_\nu}.
\]

The lack of translation invariance prevents a direct application of the classical Fourier argument. We overcome this by constructing damping functions adapted to translated regular sets and combining Beurling–Malliavin multipliers with large-argument Bessel asymptotics and a Bourgain–Dyatlov multiscale iteration.

\end{abstract}
\noindent{\bf 2020 Mathematics Subject Classification:} 43A32; 28A80.
\newline 
\noindent{\bf Key words:} Multidimensional
Fourier–Bessel transform, Quantitative unique continuation, Fractal uncertainty principle
\section{Introduction}
In their seminal article \cite{BD}, Bourgain and Dyatlov established a new variant of the uncertainty principle for the Fourier transform in one dimension, known as the fractal uncertainty principle (FUP). It asserts that no function can be concentrated on a fractal set in both position and frequency domains. 
\par To state the fractal uncertainty principle precisely, we first fix the convention for the Fourier transform. For \( f \in L^1(\mathbb{R}) \), define
\begin{equation}
\hat{f}(\xi)=\mathscr{F}f(\xi)=\int_{\mathbb{R}}e^{-2\pi i x\xi}f(x)\,dx.
\end{equation}
A useful feature of this normalization is that \(\mathscr{F}\) extends to a unitary operator on \(L^2(\mathbb{R})\). We also recall the notion of Ahlfors–David (AD) regularity.
\begin{definition}
Let $X\subset \mathbb{R}$ be a nonempty closed set and $\delta \in[0,1]$, $C_R\geq1$, $0\leq \alpha_0\leq\alpha_1\leq \infty$. We say that $X$ is $\delta$-regular with constant $C_R$ on scales $\alpha_0$
to $\alpha_1$ if there exists a Borel measure $\mu_X$ on $\mathbb{R}$ such that
\par $\bullet$ $\mu_X$ is supported on $X$, that is $\mu_X(\mathbb{R}\backslash X)=0$;

 $\bullet$ for each interval $I$ of size $|I|\in [\alpha_0, \alpha_1]$, we have $\mu_X(I)\leq C_R |I|^{\delta}$;
 
  $\bullet$ if additionally $I$ is centered at a point in $X$, then $\mu_X(I)\geq C_R^{-1}|I|^{\delta}$.

\end{definition}
We now state Bourgain and Dyatlov’s theorem.
\begin{theorem}(Bourgain-Dyatlov). Let $0\leq \delta<1$, $C_R\geq 1$, $h\leq 1$, and assume that
\par $\bullet$ \(X\subset[-1,1]\)  is $\delta$-regular with constant $C_R$ on scales $h$ to 1; and

$\bullet$ \(Y\subset[-h^{-1},h^{-1}]\) is $\delta$-regular with constant $C_R$ on scales 1 to $h^{-1}$.\\
\\
Then there exist \(\beta,C>0\) depending only on $\delta$, $C_R$ such that for all \(f\in L^2(\mathbb{R})\)
\begin{equation}
\operatorname{supp}\hat{f}\subset Y\Longrightarrow \lVert f \mathbf{1}_{X}\rVert_{L^2(\mathbb{R})}\leq Ch^{\beta}\lVert f\rVert_{L^2(\mathbb{R})}.
\end{equation}
Here $L^2(X)$ is defined using the Lebesgue measure.
\end{theorem}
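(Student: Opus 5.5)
We follow the Bourgain--Dyatlov strategy, which splits into a single-scale uncertainty statement and a multiscale iteration. The first step is to rescale and discretize. We cover $X$ by intervals of length $h$ and $Y$ by unit intervals, and we work at the intermediate scales $L^{-k}$, where $L=L(\delta,C_R)$ is a large fixed integer. The target is a gain of a fixed factor $1-c$, with $c=c(\delta,C_R)>0$, at each of roughly $\log(1/h)/\log L$ scales. Multiplying these gains gives $\|f\mathbf 1_X\|\le (1-c)^{\log(1/h)/\log L}\|f\|=Ch^\beta\|f\|$.

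The key input is a quantitative unique continuation estimate for functions with Fourier support in a porous set. Because $\delta<1$, $\delta$-regularity on the relevant scales implies that $Y$ is porous. That is, every interval of length $R\in[1,h^{-1}]$ contains a subinterval of length $R/L$ that misses $Y$. The same porosity holds for $X$ on the scales $h$ to $1$.

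We use porosity to build a damping function. The aim is a weight $\psi\in L^2(\mathbb R)$ with $\operatorname{supp}\hat\psi\subset[-c_0,c_0]$ and $|\psi(x)|\le \exp(-c_1\,\omega(x))$, where $\omega$ grows along the complement of $Y$. Porosity lets us choose $\omega$ with $\int\omega(x)/(1+x^2)\,dx<\infty$. The Beurling--Malliavin multiplier theorem then produces a nonzero band-limited $\psi$ that decays like $e^{-\omega}$. We convolve a smooth cutoff of $\mathbf 1_Y$ against $\hat\psi$ to obtain a function that is adapted to $Y$ and has a fixed-size band in frequency.

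From the damping function we derive a one-scale uncertainty lemma. If $\operatorname{supp}\hat f\subset Y$, then for every interval $I$ of length comparable to $1$ we have $\|f\|_{L^2(I)}\le C\|f\|_{L^2(I\setminus X_{\rm loc})}$, where $X_{\rm loc}$ is $X$ thickened at the appropriate scale. The proof uses the Beurling--Malliavin construction together with a Cauchy-estimate, or Remez-type, argument for functions of exponential type. Porosity of $X$ guarantees that each such $I$ contains a gap of $X$ of proportional size.

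The multiscale induction uses a partition of unity $\chi_j$ adapted to $L^{-k}$-intervals. We apply the one-scale lemma to each rescaled frequency-localized piece and obtain
\[
\|\mathbf 1_{X_{k+1}} f\|\le (1-c)\|\mathbf 1_{X_k} f\|+ \text{(tails)},
\]
where $X_k$ is the $L^{-k}$-neighbourhood of $X$. The tails are controlled by the decay of $\psi$.

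We expect the main obstacle to be the Beurling--Malliavin construction. The difficulty is to make the constants uniform over all scales and all translates, so that the per-scale gain $c$ depends only on $\delta$ and $C_R$. We would first work with porosity alone and prove a Fourier-side bound of the form $\int \log(1+\omega(x))$, and then check that all constants depend only on $(\delta,C_R)$.
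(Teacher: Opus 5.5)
The paper does not prove this theorem; it is quoted from Bourgain--Dyatlov. Section~\ref{Sec4} does, however, reproduce its architecture. Measured against that, your outline names the right ingredients but gets three load-bearing steps wrong.

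\textbf{First gap: the damping must act on $Y$, not on its complement.} Since $\hat f$ vanishes off $Y$, damping there is useless. Moreover, a weight growing on $\mathbb{R}\setminus Y\supset\{|\xi|>h^{-1}\}$ at the needed near-linear rate has infinite logarithmic integral. What is needed (Definition~\ref{def:damping}, Lemma~\ref{lem:damping}) is a $\psi$ with the following properties:
\begin{itemize}
\item $\psi$ is supported in $[-c_1,c_1]$ in physical space, with $c_1$ small compared to the intervals composing $S$;
\item $|\hat\psi|\ge c_2$ near $0$;
\item $|\hat\psi(\xi)|\le 1/W(|\xi|)$ for $\xi\in Y$, where $W(t)=\exp(c_3t/\log^{\alpha}(2+t))$ with $\alpha<1$.
\end{itemize}
The sparseness $\delta<1$ is exactly what lets a Lipschitz weight of this size on $Y$ have finite logarithmic integral, so Beurling--Malliavin applies. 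At the same time $\int\log W/(1+t^2)=\infty$ still holds, so $W$ is a quasi-analytic weight.

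\textbf{Second gap: the single-scale estimate must be global and $h$-independent.} The $h$-independence cannot come from Remez or Cauchy estimates for functions of exponential type: $f$ has type $\sim h^{-1}$, and such arguments lose $e^{Ch^{-1}}$. Nor is $\hat\psi$ to be convolved against a cutoff of $\mathbf 1_Y$. Instead $\psi$ is used as a Fourier multiplier, as in the proof of Proposition~\ref{proposition4.1}:
\begin{itemize}
\item Set $f_\eta=f*\psi_\eta$ over a lattice of $\eta$, with $\psi_\eta$ the modulated damping function for $Y-\eta$. This is uniform in $\eta$ because regularity is translation invariant, and it gives $\sum_\eta|\hat\psi_\eta|^2\gtrsim1$.
\item The good pieces satisfy $\|W(|\cdot-\eta|)^{1/2}\hat f_\eta\|\le A\|f_\eta\|$. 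They therefore lie in a Denjoy--Carleman quasi-analytic class and obey uniform Logvinenko--Sereda-type bounds.
\item The bad pieces carry little mass.
\item The small support of $\psi_\eta$ passes from $f_\eta$ on a shrunken $S$ to $f\mathbf 1_S$, and almost orthogonality sums over $\eta$.
\end{itemize}
The outcome is the \emph{global} bound of Proposition~\ref{propositionBD}. Your per-interval version $\|f\|_{L^2(I)}\le C\|f\|_{L^2(I\setminus X_{\mathrm{loc}})}$ is false in general. When $Y$ contains an interval of length $1/\pi$, a modulated translate of $(\sin(x/N)/(x/N))^N$ is admissible. It vanishes to order $N$ at any prescribed point, for example inside a small gap that is all of $I\setminus X_{\mathrm{loc}}$, so the ratio blows up as $N\to\infty$.

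\textbf{Third gap: the multiscale induction cannot use sharp cutoffs.} You propose $\|\mathbf 1_{X_{k+1}}f\|\le(1-c)\|\mathbf 1_{X_k}f\|+\text{(tails)}$. But $\mathbf 1_{X_k}f$ has no controlled Fourier support, so no uncertainty estimate applies to it at the next scale. Additive errors accumulated over $\sim\log(1/h)$ scales would also destroy the power gain. The decay of $\psi$ plays no role here.

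The missing device (Section~\ref{sec:iteration}) is to use band-limited cutoffs
$\Psi_n=\psi_n*\mathbf 1_{X+[-2^{-n-T/2},2^{-n-T/2}]}$, where $\psi_n\ge0$, $\int\psi_n=1$ and $\operatorname{supp}\hat\psi_n\subset[-2^{n+T},2^{n+T}]$. Then $\Psi_n\ge1-C2^{-T}$ on $X$, and $\Psi_n\le C2^{-T}$ at distance $\ge5\cdot2^{-n-T/2}$ from $X$. The iteration runs as follows:
\begin{itemize}
\item $f_m=\prod_{j<m}\Psi_{jT}f$ has $\operatorname{supp}\hat f_m\subset Y+[-2^{mT+1},2^{mT+1}]$.
\item After rescaling by $2^{mT}$, this set is still porous on scales $\sim1$ to $2^{-mT}h^{-1}$.
\item The global estimate on the gaps $\mathscr S$ of $X$ at scale $2^{-mT}$, where $\Psi_{mT}\le\frac12$, then gives $\|f_{m+1}\|\le(1-\gamma_0)\|f_m\|$.
\end{itemize}
The only losses are the multiplicative factors in $\|f_{m_0}\|\ge(1-C2^{-T})^{m_0}\|f\mathbf 1_X\|$, and these are absorbed by taking $T$ large.

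Finally, the uniformity over translates and scales that you flag as the main obstacle comes for free from the translation and dilation invariance of regularity. This holds once the weight on $Y$ and the Beurling--Malliavin multiplier are built with constants depending only on $(\delta,C_R)$.
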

 This theorem has striking applications to quantum chaos. Indeed, by applying the fractal uncertainty principle (FUP) to fractal sets arising from chaotic dynamical systems, one can control high-frequency waves on such systems. For instance, FUP has been used to obtain lower bounds on the mass of eigenfunctions (Dyatlov, Jin, and Nonnenmacher \cite{DJ,DJN}), spectral gaps for open quantum systems (Dyatlov--Zahl and Dyatlov--Zworski \cite{DZ,DZwor}), and control results for the Schr\"{o}dinger equation as well as exponential decay estimates for the damped wave equation \cite{JLong,JLong1}. We refer the reader to the survey \cite{DYAS} for a more comprehensive overview.
 
 Beyond these important applications, the fractal uncertainty principle is also of significant theoretical interest in its own right. Research in this direction includes the computation of sharp constants \cite{JZhang} and the establishment of FUP analogues for other transforms \cite{DZwor}. In particular, the extension of FUP to fractal sets in higher dimensions has attracted considerable attention \cite{HS,CTao,BLZ}; it was not until very recently that Cohen's remarkable result \cite{CA} became sufficiently strong to be applicable to problems in high-dimensional quantum chaos. 
 
 Motivated by these developments, the primary objective of this paper is to generalize the fractal uncertainty principle from the setting of the Fourier transform to another fundamental integral transform: the Fourier--Bessel (or Hankel) transform. The study of this transform dates back to some of the earliest works in harmonic analysis; see, for example, \cite{ZA}. Various classical uncertainty principles have been extensively established for the Fourier--Bessel transform, including Heisenberg-type inequalities \cite{PC, MM}, the local uncertainty principle \cite{SO}, Hardy-type results \cite{VK}, and the Amrein--Berthier and Logvinenko--Sereda theorems \cite{GJ, GJ1, XZW}.
 \par To state our results precisely, we need to introduce some notation. Let \(\nu=(\nu_1,\ldots,\nu_d)\) be a multi-index with each \(\nu_i \in (-1/2,\infty)\). For \(1 \le p < \infty\), denote by \(L_{\nu}^p(\mathbb{R}_+^d)\) the Banach space of measurable functions on \(\mathbb{R}_+^d := (0,\infty)^d\) endowed with the norm
 \[
 \|f\|_{L_\nu^p} = \left( \int_{\mathbb{R}_+^d} |f(\xi)|^p \, d\mu_\nu(\xi) \right)^{1/p},
 \]
 where \(d\mu_\nu(\xi) := \prod_{i=1}^{d} \xi_i^{2\nu_i+1} \, d\xi\). For \(f \in L_{\nu}^{1}(\mathbb{R}^{d}_{+})\), the multidimensional Fourier–Bessel transform \(\mathcal{H}_{\nu}\) is defined (see, e.g., \cite{BC}) by  
 \[
 \mathcal{H}_{\nu}(f)(x) = \int_{\mathbb{R}_+^d} \mathbb{J}_\nu(x \xi)\, f(\xi)\, d\mu_\nu(\xi), 
 \quad x \in \mathbb{R}^d_+ ,
 \]
 where the kernel \(\mathbb{J}_\nu(x \xi)\) is given by  
 \[
 \mathbb{J}_\nu(x \xi) = \prod_{i=1}^d j_{\nu_i}(x_i \xi_i),
 \]
 with  
 \begin{equation*}
 	j_{\nu_i}(t) := \frac{J_{\nu_i}(t)}{t^{\nu_i}}
 	= \frac{1}{2^{\nu_i}} \sum_{k=0}^{\infty} \frac{(-1)^k}{k!\,\Gamma(\nu_i+k+1)} 
 	\left( \frac{t}{2} \right)^{2k}.
 \end{equation*}
 Here, \(J_{\nu_i}\) denotes the Bessel function of the first kind, and \(\Gamma\) denotes the Gamma function. It is known that \(\mathcal{H}_{\nu}\) extends to a unitary operator on \(L^2_\nu(\mathbb{R}_+^d)\).
 \par Motivated by these works \cite{BD, BJ, CA}, our aim in this paper is twofold. First, we establish a quantitative uniqueness property for \(L^2\)-functions with rapidly decaying Fourier–Bessel transforms, extending our recent result for band-limited functions \cite{XZW}. Second, we prove a Bourgain–Dyatlov-type fractal uncertainty principle for the Fourier–Bessel transform. We first present the quantitative uniqueness result and then explain why, unlike in the classical Fourier setting, it does not by itself yield the corresponding FUP. The main obstruction is the lack of translation invariance of the Fourier–Bessel transform. Before stating our results, we recall two basic definitions.
  
 \begin{definition}\label{Def1.1}
 For every $\nu \in (-1/2, \infty)^d$, a measurable set $E \subset \mathbb{R}_+^d$ is called \textit{relatively dense} with respect to $\mu_\nu$ if there exist constants $\gamma > 0$ and $L > 0$ such that for every cube $Q$ with side length $L$, we have
 \begin{equation}\label{dense}
 \mu_\nu(E \cap Q) \ge \gamma \mu_\nu(Q).
 \end{equation}
 Here \(\mu_\nu(A) := \int_A d\mu_\nu\) for any subset $A$.
 \end{definition}
 \begin{remark}
    It is worth noting that the definition of AD regularity is fundamentally a geometric property of the metric space $(M, d)$, characterizing the distribution of a candidate measure $\mu_X$ rather than relying on the background measure of the ambient space. Consequently, unlike the definition of thick sets discussed in the sequel, which is intrinsically tied to the density of the underlying measure, we do not need to introduce a new definition of regularity specifically adapted to weighted measures such as the Hankel measure $d\mu_\nu$. In fact, the AD regularity condition is almost equivalent to a more direct concept, the notions of porosity, up to a change in parameters, see \cite{DYAS}.
   \end{remark}
 Note that when $\nu = (-1/2, \dots, -1/2)$, the measure $\mu_\nu$ reduces to the standard Lebesgue measure, and this definition coincides with the classical notion of relative density. It was shown in \cite{XZW} that these definitions are equivalent up to the constants $(\gamma, L)$. For brevity, in the Lebesgue case, we shall simply refer to $E$ as a $(\gamma, L)$-relatively dense set and use this notion in the sequel. We also need to recall from \cite {BJ} the following definition of unique continuation weight.
 \begin{definition}\label{Def1.2}
 We say a weight $W : [0, \infty) \to [0, \infty]$ is a \textbf{unique continuation weight} if
 \begin{enumerate}
     \item $W(0) = 1$,
     \item $W$ is non-decreasing,
     \item $W$ is lower semicontinuous,
     \item The mapping $s \mapsto \log W(e^s)$ is convex on $[0, \infty)$,
     \item $W(t) \ge c_n t^n$ for all $n \ge 0$ and some $c_n > 0$,
     \item $\int_{0}^{\infty} \frac{\log W(t)}{1 + t^2} \, dt = \infty$.
 \end{enumerate}
 \end{definition}
 We now proceed to state our first result in this paper.
 \begin{theorem}\label{Th1.2}
 For any $\nu \in (-1/2, \infty)^d$, let $W$ be a unique continuation weight and let $E \subset \mathbb{R}_+^d$ be a $(\lambda, L)$-relatively dense measurable set. If $f \in L_\nu^2(\mathbb{R}_+^d)$ satisfies \begin{equation}
 \bigl\| \mathcal{H}_{\nu} f \, W(|\cdot|) \bigr\|_{L^2_\nu(\mathbb{R}_+^d)} \le A_w \|f\|_{L^2_\nu(\mathbb{R}_+^d)},
 \end{equation}
 then
 \begin{equation}\label{UCT}
 \|f\|_{L^2_\nu(\mathbb{R}_+^d)} \le C(W,d,A_w,\lambda,L,\nu) \|f\|_{L^2_\nu(E)}.
 \end{equation}
 \end{theorem}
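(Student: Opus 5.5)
We reduce Theorem~\ref{Th1.2} to the Euclidean quantitative unique continuation theorem of Bhattacharya--Jaye type \cite{BJ}, via the standard identification of the Fourier--Bessel transform with a radial Fourier transform in higher dimensions. This works cleanly when each $\nu_i$ is a half-integer; for general $\nu$ we instead transplant the \emph{proof} of \cite{BJ} rather than its statement.

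\textbf{Reduction for half-integer indices.} For $\nu_i=(n_i-2)/2$, set $n=\sum_i n_i$ and define
\[
F(y_1,\dots,y_d)=f(|y_1|,\dots,|y_d|),\qquad y_i\in\mathbb R^{n_i}.
\]
Then $F$ is multi-radial, $\|F\|_{L^2(\mathbb R^n)}=c_\nu\|f\|_{L^2_\nu}$, and $\widehat F(\eta)=c\,\mathcal H_\nu f(2\pi|\eta_1|,\dots,2\pi|\eta_d|)$. Since $|\eta_i|\le|\eta|$ and $W$ is non-decreasing, the hypothesis gives
\[
\|\widehat F\,W(c|\cdot|)\|_{L^2}\lesssim A_w\|F\|_{L^2}
\]
after a harmless rescaling of $W$ (which preserves properties 1--6). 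Next, $\tilde E=\{y:(|y_i|)_i\in E\}$ is relatively dense in the Lebesgue sense. To see this, use the equivalence of Lebesgue and $\mu_\nu$ relative density from \cite{XZW}: a Euclidean cube of side $L'$ projects in polar coordinates onto a region whose $\mu_\nu$-mass is comparable to that of a cube of side $\sim L'$. Applying the Euclidean theorem of \cite{BJ} to $F$ and $\tilde E$ then yields \eqref{UCT}.

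\textbf{General $\nu$.} We rerun the argument of \cite{BJ} in the Hankel setting.

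\begin{enumerate}
\item \emph{Analytic extension and Bernstein estimates.} Conditions 4--6 on $W$ give a quasi-analytic Denjoy--Carleman class: with $M_k=\sup_t t^k/W(t)$, the Carleman condition $\sum M_k^{-1/k}=\infty$ follows from condition 6. For the Bessel operators $B_i=\partial_{x_i}^2+\frac{2\nu_i+1}{x_i}\partial_{x_i}$ we have $\mathcal H_\nu(B^\alpha f)=(-|\xi|^2)^\alpha\mathcal H_\nu f$ componentwise. Plancherel together with the weight bound then gives
\[
\|\Delta_\nu^k f\|_{L^2_\nu}\lesssim A_w M_{2k}\|f\|_{L^2_\nu},
\]
and, since the $j_{\nu}$ are bounded by $1$, analogous bounds on ordinary derivatives hold locally.
\item \emph{Good cubes.} Split $\mathbb R^d_+$ into cubes $Q$ of side $L$. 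Using Chebyshev's inequality on $\sum_Q\sum_k \|\partial^k f\|_{L^2_\nu(Q)}^2/(\text{weights})$, we find that cubes carrying at least half of $\|f\|^2$ satisfy local derivative bounds $\|\partial^\alpha f\|_{L^2_\nu(Q)}\le C^{|\alpha|}M_{|\alpha|}\|f\|_{L^2_\nu(Q)}$.
\item \emph{Local estimate.} On each good cube, a quasi-analytic Remez/Nazarov--Sodin--Volberg-type inequality for Carleman classes gives
\[
\|f\|_{L^2_\nu(Q)}\le C\|f\|_{L^2_\nu(E\cap Q)},
\]
using $\mu_\nu(E\cap Q)\ge\lambda\mu_\nu(Q)$. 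Summing over good cubes gives the claim.
\end{enumerate}

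\textbf{Main obstacle.} The hardest step is the local-derivative step for general $\nu$. The operator $\mathcal H_\nu$ does not intertwine $\partial_{x_i}$ with multiplication, only $B_i$, and the weight $x^{2\nu+1}$ degenerates near the coordinate hyperplanes. We will first try to control ordinary derivatives from powers of $B_i$ by elliptic estimates on cubes away from the boundary. Near the boundary we will use the representation $f(x)=\int \mathbb J_\nu(x\xi)\mathcal H_\nu f(\xi)\,d\mu_\nu(\xi)$, in which $j_\nu$ is an even entire function of $t$ with $|j_\nu^{(k)}(t)|\le C$. Differentiating under the integral gives
\[
|\partial^\alpha f(x)|\le\int|\xi^\alpha||\mathcal H_\nu f|\,d\mu_\nu,
\]
which is bounded by $M_{|\alpha|}$ times the weighted norm, at the cost of an integrability factor handled by condition 5. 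Cubes near the boundary are also comparable in $\mu_\nu$-measure to dilated cubes, so the Remez step survives with constants depending on $\nu$.
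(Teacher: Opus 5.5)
Your reduction for half-integer indices is sound, and it is a genuinely different route from the paper's. Lifting $f$ to the multi-radial $F$ on $\mathbb{R}^{n_1}\times\cdots\times\mathbb{R}^{n_d}$ turns $\mathcal{H}_\nu$ into the Euclidean Fourier transform. Relative density transfers because a Euclidean cube of side $L'\gg L$ contains, in a fixed proportion of its volume, products of polar boxes of radial width $L$. But this covers only $2\nu_i+2\in\mathbb{N}$. For general $\nu$ there is a genuine gap, exactly at the step you flag as the main obstacle.

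The good-cube selection sums $\|\partial^\alpha f\|_{L^2_\nu(Q)}^2$ over all cubes. It therefore needs a \emph{global $L^2$} bound of the form $\|\partial^\alpha f\|_{L^2_\nu}\le C^{|\alpha|}M_{|\alpha|+N}\|f\|_{L^2_\nu}$, and Plancherel gives this only for $B^\alpha f$.

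Your observation $|j_\nu^{(k)}|\le j_\nu(0)$ is correct, but it only yields $|\partial^\alpha f(x)|\lesssim A_wM_{|\alpha|+N}\|f\|_{L^2_\nu}$. This is an $L^\infty$ bound in terms of the \emph{global} norm, which fails in two ways:
\begin{itemize}
\item it cannot be summed over infinitely many cubes;
\item it gives no bound relative to $\|f\|_{L^2_\nu(Q)}$, which the normalization in the Remez/compactness step requires.
\end{itemize}
For $d\ge 2$ there are infinitely many cubes along the coordinate hyperplanes, so they cannot be handled one at a time.

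The interior alternative is to recover $\partial^{2k}f$ from $B^kf$ by iterated local elliptic estimates. That is a theorem-of-iterates statement in a Carleman class whose coefficients $(2\nu_i+1)/x_i$ are singular at the hyperplanes. Derivatives of these coefficients and the nested cutoffs generate factorial terms. You would have to prove that the resulting sequence is still quasi-analytic and that the constants are uniform over cubes, and the proposal does neither. Remark~\ref{Remark3.3} records how the naive route fails: expanding $\partial_x^kf$ through Bessel recurrences loses $k!/\lfloor k/2\rfloor!$, which destroys quasi-analyticity.

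The missing idea is to abandon ordinary derivatives. The paper uses $D_i=\frac{1}{2x_i}\partial_{x_i}$, for which $D^\beta f=(-\frac12)^{|\beta|}\mathcal{H}_{\nu+\beta}(\mathcal{H}_\nu f)$ holds exactly. Plancherel then gives the loss-free, global $L^2$ bound $\|D^\beta f\|_{L^2_{\nu+\beta}}\le A_w2^{-|\beta|}M_{|\beta|}\|f\|_{L^2_\nu}$, which is summable over cubes. Under $s=x^2$, $D_i$ becomes $\partial_{s_i}$. The extra weight $s^\beta$ exactly compensates the stretching (side lengths $\sim 2n_i+1$) of the images of unit cubes, so rescaled good cubes carry $M_{n+d}$-type bounds.

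The degenerate layer $Z_\delta$ near the hyperplanes is discarded rather than analyzed. Lemma~\ref{L33} uses a partial transform in $x'$, Riemann--Lebesgue in $x_1$, and Cauchy--Schwarz against $W^{-2}$ to give $\|f\|_{L^2_\nu(Z_\delta)}\lesssim\delta^{\min\nu_j+1}\|f\|_{L^2_\nu}$. Your pointwise bound with $\alpha=0$ is essentially the one-dimensional case of that lemma. What your plan lacks is using that bound to make the boundary layer negligible, instead of as a derivative bound on boundary cubes.
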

 Theorem \ref{Th1.2} extends our recent uniqueness result \cite{XZW} for functions with compactly supported Fourier--Bessel transform to functions with sufficiently fast decaying Fourier--Bessel transform. Such a uniqueness result is first established in \cite{BJ} for Fourier transform inspired by Bourgain and Dyatlov's research \cite{BD}. Compared with the Logvinenko-Sereda type theorem established in \cite{XZW}, the main technical difficulty lies in the fact that the classical derivative operator estimates developed there are no longer valid (see Remark \ref{Remark3.3}). Inspired by~\cite{GJ}, we turn to the derivative operator $D_i=\frac{1}{2x_i}\frac{\partial}{\partial x_i}$. This operator is well suited to the Bessel operator $-\Delta + \sum_{i=1}^{d}\frac{2\nu_i+1}{x_i}\frac{\partial}{\partial x_i}$, as the Fourier--Bessel transform is closely related to the latter, see, for example, \cite{BT}.
 \par Moreover, we point out that in the Fourier setting, the trick of using translates to cover the frequency spectrum makes the corresponding uniqueness result a key tool for establishing the fractal uncertainty principle in \cite{BD, CA}. We observe, however, that since the Fourier--Bessel transform is no longer translation-invariant, Theorem~\ref{Th1.2} does not, strictly speaking, imply the following statement: if $f \in L_\nu^2(\mathbb{R}_+^d)$ satisfies
 \begin{equation*}
 \bigl\| \mathcal{H}_{\nu} f \, W(|\eta-\cdot|) \bigr\|_{L^2_\nu(\mathbb{R}_+^d)} \le A_w \|f\|_{L^2_\nu(\mathbb{R}_+^d)}
 \end{equation*}
 for some $\eta\in \mathbb{R}_{+}^d$, then
 \begin{equation*}
 \|f\|_{L^2_\nu(\mathbb{R}_+^d)} \le C(W,d,A_w,\lambda,L,\nu) \|f\|_{L^2_\nu(E)}.
 \end{equation*}
 Indeed, in this situation the constant $C(W,d,A_w,\lambda,L,\nu)$ would necessarily depend on $\eta$. Consequently, the technical route of covering the spectrum by translates and then invoking Theorem~\ref{Th1.2} to obtain the corresponding fractal uncertainty principle for the Fourier--Bessel transform is no longer viable. In view of this technical challenge, we instead turn to the asymptotic expansion of Bessel functions and establish the following high-frequency fractal uncertainty principle for the one- dimensional Fourier--Bessel transform.
\begin{theorem}\label{Th1.3}
Let \(1> \delta \geq 0\), \(h^{-1} \ge 1\), $a\geq 0$, $\nu>-\frac{1}{2}$ and assume that
\begin{itemize}
    \item \(X \subset [0,1]\)  is $\delta$-regular with constant $C_R$ on scales $h$ to 1; and
    \item \(Y \subset [a,a+h^{-1}]\) is $\delta$-regular with constant $C_R$ on scales \(1\) to \(h^{-1}\).
\end{itemize}
Then there exist $a_0>0$, \(\beta > 0\) and \(C > 0\) depending only on \(\delta, C_R , \nu\) such that for all $a\geq a_0h^{-1}$ and \(f \in L_\nu^2(\mathbb{R}_+)\)
\[
\operatorname{supp} \mathcal{H}_{\nu}(f) \subset Y \quad\Longrightarrow\quad 
\|f\|_{L^2_\nu(X)} \le C h^{\beta} \|f\|_{L^2_\nu(\mathbb{R}_+)}.
\]
\end{theorem}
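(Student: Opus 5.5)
We argue by duality and reduce, via large-argument Bessel asymptotics, to an essentially Fourier problem on the frequency window $[a,a+h^{-1}]$, and then run the Bourgain--Dyatlov scheme directly there. Write $g=\mathcal H_\nu f$, supported in $Y$. Since $\mathcal H_\nu$ is unitary and an involution on $L^2_\nu$, we have for $x\in X\subset[0,1]$
\[
f(x)=\int_Y j_\nu(x\xi)g(\xi)\,\xi^{2\nu+1}d\xi .
\]
Conjugating by the weights, set $F(x)=x^{\nu+1/2}f(x)$ and $G(\xi)=\xi^{\nu+1/2}g(\xi)$, so that $\|F\|_{L^2(dx)}=\|f\|_{L^2_\nu}$, and so on. Then
\[
F(x)=\int_Y \sqrt{x\xi}\,J_\nu(x\xi)\,G(\xi)\,d\xi .
\]
The claim becomes $\|\mathbf 1_X \mathcal K\mathbf 1_Y\|_{L^2\to L^2}\le Ch^\beta$, where the kernel is $K(x,\xi)=\sqrt{x\xi}J_\nu(x\xi)$.

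\textbf{Step 1 (splitting $X$).} Split $X=X_0\cup X_1$ with $X_0=X\cap[0,\epsilon]$, where $\epsilon$ will be a small power of $h$. On $X_0$ we use only the trivial bound $|K|\lesssim 1$ for $x\xi\gtrsim1$ together with regularity. Since $X$ is $\delta$-regular down to scale $h$, its $\epsilon$-neighbourhood $X_0$ has Lebesgue measure at most $C\epsilon^{1-\delta}$ at scale $\epsilon$. Hence the Hilbert--Schmidt norm of $\mathbf 1_{X_0}K\mathbf 1_Y$ is at most
\[
\Bigl(\int_{X_0}\int_Y |K|^2\Bigr)^{1/2}\lesssim (\epsilon\,|Y|)^{1/2}.
\]
This is not small, because $|Y|$ can be of size $h^{\delta-1}$. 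So instead we treat $X_0$ inside the main argument by choosing $a_0$ so large that even at $x\sim h$ we have $x\xi\ge a h\ge a_0\gg1$. This is exactly why the hypothesis $a\ge a_0h^{-1}$ is there: for all $x\ge h$ (scales below $h$ are irrelevant by regularity, up to an $h^{\beta}$ loss, after thickening $X$ to its $h$-neighbourhood) we are in the asymptotic regime.

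\textbf{Step 2 (asymptotic expansion).} Hankel's expansion gives
\[
\sqrt{t}\,J_\nu(t)=\sqrt{2/\pi}\,\cos\bigl(t-\tfrac{\nu\pi}{2}-\tfrac{\pi}{4}\bigr)+R(t),\qquad |R(t)|\lesssim t^{-1}.
\]
So $K$ is a combination of $e^{\pm i x\xi}$ plus an error bounded by $(x\xi)^{-1}\le (xa)^{-1}$. The main terms are exactly Fourier operators $\mathbf 1_X\mathscr F^{\pm1}\mathbf 1_Y$ after rescaling by $2\pi$. Translating $Y$ by $-a$ changes only a unimodular factor $e^{ixa}$, which commutes with $\mathbf 1_X$. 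So the Bourgain--Dyatlov theorem (Theorem 1.1, applied to $Y-a\subset[0,h^{-1}]$ and with adjusted constants) bounds these terms by $Ch^{\beta}$.

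\textbf{Step 3 (error term, the main obstacle).} The error operator needs an $L^2$ bound of order $h^\beta$. A crude Schur or Hilbert--Schmidt estimate gives
\[
\int_{X_h}\int_{a}^{a+h^{-1}}(x\xi)^{-2}\lesssim \frac{h^{-1}}{a^2}\int_{X_h}x^{-2}\,dx,
\]
which is borderline near $x\sim h$. Here I would instead keep one more term of the expansion: the next term is $c\,e^{\pm ix\xi}/(x\xi)$ and the remainder is $O((x\xi)^{-2})$. The second-order term is a Fourier operator composed with the multipliers $x^{-1}$ and $\xi^{-1}$. The factor $\xi^{-1}\le a^{-1}$ is harmless, and $x^{-1}\mathbf 1_{X}$ can be handled dyadically in $x$: each dyadic piece is again a Fourier FUP instance (a regular set restricted to a dyadic interval is still regular). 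The factor $(ax)^{-1}\le (a_0 2^{k}h/h)^{-1}$ then sums geometrically. The $O((x\xi)^{-2})$ remainder has Hilbert--Schmidt norm $\lesssim a^{-2}h^{-1/2}(\int_{h}^1x^{-4})^{1/2}\lesssim a_0^{-2}h^{0}$. That is only bounded, not small, so I expect to have to push the expansion to order $N$ with $N$ chosen so that the remainder contributes $\lesssim a_0^{-N}$; one can also use that the remainder near $x\sim h$ lives on a set of measure $h^{1-\delta}$-small relative size. Making this last bookkeeping give a genuine $h^\beta$ (rather than $a_0^{-N}$) is the delicate point. If it fails, the fallback is to redo the Bourgain--Dyatlov multiscale iteration directly for $\mathcal H_\nu$, constructing damping functions adapted to $[a,a+h^{-1}]$ via Beurling--Malliavin multipliers, and using the asymptotics only at each single scale.
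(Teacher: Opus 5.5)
\textbf{Verdict: there is a gap, but an easy one to fill.} Your reduction is sound: you pass to the Lebesgue-normalized kernel $K(x,\xi)=\sqrt{x\xi}\,J_\nu(x\xi)$, write $\sqrt{t}\,J_\nu(t)=\sqrt{2/\pi}\cos(t-\theta)+R(t)$, and bound the cosine part by the Fourier FUP after removing the unimodular factor $e^{\pm ixa}$. As written, however, the proof does not close. Step 3 never produces an $O(h^\beta)$ bound for the remainder operator $\mathbf 1_X R(x\xi)\mathbf 1_Y$. As you yourself note, pushing the expansion to order $N$ would only give $O(a_0^{-N})$. That is a constant which does not decay in $h$, since $a_0$ may depend only on $\delta,C_R,\nu$.

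\textbf{The missing observation.} Your estimates look borderline because you integrate $\xi$ over the whole window $[a,a+h^{-1}]$ rather than over $Y$. In fact $|Y|\le C(C_R)\,h^{-\delta}$, not $h^{\delta-1}$ as you say in Step 1. To see this, take a maximal $1$-separated subset of $Y$. The unit intervals centred at its points have disjoint interiors, each carries $\mu_Y$-mass at least $C_R^{-1}$, and $\mu_Y([a,a+h^{-1}])\le C_Rh^{-\delta}$. So there are at most $C_R^2h^{-\delta}$ such points, and the length-$2$ intervals around them cover $Y$. Combine this with $|R(t)|\le C_\nu\min(1,t^{-1})$ for all $t>0$ (valid because $\nu>-1/2$) and $\int_0^\infty\min(1,(x\xi)^{-2})\,dx=2/\xi$. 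Then the crude first-order Hilbert--Schmidt estimate already suffices:
\[
\int_Y\int_X |R(x\xi)|^2\,dx\,d\xi\;\le\;2C_\nu^2\,\frac{|Y|}{a}\;\le\;\frac{C}{a_0}\,h^{1-\delta}\qquad(a\ge a_0h^{-1}).
\]
Hence the remainder has operator norm $\lesssim h^{(1-\delta)/2}$, and the theorem follows with $\beta=\min\bigl(\beta_{\mathrm{BD}},(1-\delta)/2\bigr)$. You do not need a second-order term, a dyadic decomposition in $x$, a thickening of $X$, or separate treatment of $x<h$. The Step 1 discussion of these points is both unjustified and unnecessary.

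\textbf{Comparison with the paper.} With this fix, your argument is a genuinely different and much shorter route than the paper's. The paper never compares $\mathbf 1_X\mathcal H_\nu\mathbf 1_Y$ globally with a Fourier operator. Instead it reruns the Bourgain--Dyatlov iteration for $\mathcal H_\nu$ itself, using Hankel-convolution cutoffs $\Psi_n$ and porosity of $X$ (Lemma~\ref{L544}). The single-scale input is the unique continuation estimate of Proposition~\ref{proposition4.1}. That in turn rests on Beurling--Malliavin damping functions and on Lemma~\ref{L5.2}, where Bessel asymptotics reduce matters to the Fourier unique continuation of Proposition~\ref{propositionBD}. The hypothesis $a\ge a_0h^{-1}$ enters through the rescaling in Lemma~\ref{L544}.

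Your route uses the Fourier FUP as a black box and needs only the leading Hankel asymptotic. It also gives more. Add the bound $|X\cap J|\lesssim h^{1-\delta}|J|^{\delta}$ for intervals with $|J|\in[h,1]$. Then $\int_X\min(1,(x\xi)^{-2})\,dx\lesssim h^{1-\delta}\max(1,\xi)^{-\delta}$. Summing over dyadic blocks of $Y$, which satisfy $|Y\cap[R,2R]|\lesssim R^\delta$, gives a Hilbert--Schmidt bound $\lesssim h^{1-\delta}\log(2/h)$ for every $a\ge0$. So the comparison argument does not need the high-frequency hypothesis at all. The paper's machinery is heavier but works directly with the structure of $\mathcal H_\nu$ (Hankel convolution, damping functions). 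Yours trades that for a single kernel comparison, which is the more efficient tool for the one-dimensional statement as posed.
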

Several remarks are given in the sequel.
\begin{remark} By invoking the methodology of \cite[Lemma 2.8, 2.9]{BD}, we can provide an upper bound for the weighted measure $\mu_{\nu}$ of a $\delta$-regular set. Specifically, for a $\delta$-regular set $X \subset [0, \alpha_1]$ with constant $C_R$ on scales $\alpha_0>0$ to $\alpha_1$, a direct estimation yields:
\begin{equation}\label{eq:measure_est}
\mu_{\nu}(X) \leq 24 C_R^2 \alpha_1^{2\nu+1+\delta} \alpha_0^{1-\delta}.
\end{equation}
Equipped with this estimate, we apply H\"{o}lder's inequality along with the $L^1_\nu \to L^\infty$ bound for the Hankel transform. For any $f \in L_\nu^2(\mathbb{R}_+)$ satisfying the spectral support condition $\operatorname{supp}(\mathcal{H}_{\nu} f) \subset Y$, we have:
\begin{align*}
 \|f\|_{L^2_\nu(X)} &\leq \sqrt{\mu_{\nu}(X)} \|f\|_{L^{\infty}} \\
 &\leq C(\nu) \sqrt{\mu_{\nu}(X)} \|\mathcal{H}_{\nu} f\|_{L^{1}_{\nu}} \\
 &\leq C(\nu) \sqrt{\mu_{\nu}(X) \mu_{\nu}(Y)} \|\mathcal{H}_{\nu} f\|_{L^{2}_{\nu}} \\
 &\leq C(\nu, C_R) h^{\frac{1}{2}-\delta}(a+h^{-1})^{\nu+\frac{1}{2}} \|f\|_{L^{2}_{\nu}}.
\end{align*}

In the regime $\nu>-\delta$, which implies that the trivial upper bound $C(\nu, C_R) h^{\frac{1}{2}-\delta}(a+h^{-1})^{\nu+\frac{1}{2}}$ diverges as $h \to 0$. This highlights the fact that Theorem~\ref{Th1.3} is highly non-trivial: it establishes a bound independent of $a$ that tends to zero in a region where the standard $L^1$ and $L^\infty$ estimates fail to remain bounded.
\end{remark}
\begin{remark}
The factor $h^{-1}$ in the lower bound on the frequency appears in Lemma~\ref{L544}; it is a consequence of a rescaling argument in conjunction with Proposition~\ref{proposition4.1}.
\end{remark}
\begin{remark}
A key tool for establishing Theorem~\ref{Th1.3} is the unique continuation property stated in Proposition~\ref{proposition4.1}, in the case where \(Y \subset [0, h^{-1}]\) is \(\delta\)-porous at all scales from \(1\) to \(h^{-1}\). We note that, similarly to Proposition~\ref{proposition4.1}, employing a cutoff parameter \(K\) yields the existence of a positive constant \(K_0(\delta, C_R, \lambda, \nu)\) such that for all \(K \ge K_0\) and for any \(f \in L^2_\nu(\mathbb{R}_+)\) with \(\operatorname{supp} \mathcal{H}_{\nu} f \subset Y \cap [K, \infty)\), we have (see Proposition~\ref{proposition4.1} for the definition of $S$)
\[
\|f\|_{L^2_\nu} \le C(\delta, C_R, \lambda,\nu) \|f\mathbf{1}_S\|_{L^2_\nu}.
\]
One may consider splitting a function \(f\) with \(\operatorname{supp} \mathcal{H}_{\nu} f \subset Y\) into two parts \(f = f_1 + f_2\), where
\[
\operatorname{supp} \mathcal{H}_{\nu} f_1 \subset Y \cap [0, K_0], \qquad 
\operatorname{supp} \mathcal{H}_{\nu} f_2 \subset Y \cap [K_0, \infty).
\]
Combining this decomposition with the Logvinenko--Sereda--type theorem established in \cite{XZW} yields
\[
\|f_i\|_{L^2_\nu} \le C(\delta, C_R, \lambda,\nu) \|f_i\mathbf{1}_S\|_{L^2_\nu}, \qquad i=1,2.
\]
However, it is not clear how to combine these two estimates to obtain the desired global result
\[
\|f\|_{L^2_\nu} \le C(\lambda,\delta,\nu) \|f\mathbf{1}_S\|_{L^2_\nu}.
\]
Thus, for the low frequency and higher dimension cases,  further work is required to overcome the lack of translation invariance.
\end{remark}
\par {\bf Plan of the paper.} The rest of the paper is structured as follows. Section~\ref{Sec2} contains the preliminary material: we introduce our notation, recall the essential properties of the Fourier--Bessel transform, prove a higher-dimensional Paley--Wiener type theorem, and briefly review quasi-analytic classes and their relevant features. In Section~\ref{Sec3}, we establish the unique continuation result stated in Theorem~\ref{Th1.2}. Finally, Section~\ref{Sec4} is dedicated to the proof of the high-frequency fractal uncertainty principle, Theorem~\ref{Th1.3}.
\section{Preliminaries}\label{Sec2}
\subsection{Notation and basic properties of the Fourier–Bessel transform}
In this subsection, we provide a summary of the notation and introduce some main properties of the multidimensional Fourier–Bessel transform which will be used in the sequel.
\begin{itemize}
    \item Throughout this paper, we will use $C(a,b,c,...)$ to denote a constant that is allowed to depend on parameters $a$, $b$, $c$, etc. Occasionally we use subscripts such as $C_d$ to emphasize dependence on the single parameter $d$.
    \item \(B_+(0,r) = B(0,r) \cap \mathbb{R}^d_+\), where \(B(0,r)\) denotes the ball in $\mathbb{R}^d$ centered at the origin of radius \(r\).
    \item For a fixed \(d\)-tuple \(\nu = (\nu_1,\dots,\nu_d) \in (0,\infty)^d\), denote \(|\nu| = \nu_1 + \cdots + \nu_d\).
    \item For a measurable set \(A \subset \mathbb{R}^d_+\), \(|A|\) is its Lebesgue measure, \(\mu_\nu(A) := \int_A d\mu_\nu\), and \(\mathbf{1}_A\) is its characteristic function.
    \item \(\mathcal{C}_{e,0}(\mathbb{R}^d)\): the space of even continuous functions on \(\mathbb{R}^d\) such that
    \[
    \lim_{\|\xi\|\to\infty} f(\xi)=0,\qquad 
    \|f\|_{\mathcal{C}_{e,0}} = \sup_{\xi\in\mathbb{R}^d_+} |f(\xi)| < \infty,
    \]
    where \(\|\xi\|^2 = \xi_1^2 + \cdots + \xi_d^2\).
    \item \(\mathcal{S}_e(\mathbb{R}^d)\): the Schwartz space of even \(C^\infty\) functions on \(\mathbb{R}^d\) that are rapidly decreasing together with all derivatives. The topology is defined by the seminorms
    \[
    \rho_m(f) = \sup_{\xi\in\mathbb{R}^d,\ |\gamma|\le m} (1+\|\xi\|^2)^m |\partial^\gamma f(\xi)| < \infty, \quad \forall m\in\mathbb{N},
    \]
    where \(\partial^\gamma f(\xi) = \frac{\partial^{|\gamma|} f}{\partial \xi_1^{\gamma_1}\cdots\partial \xi_d^{\gamma_d}}\).
\end{itemize}
It was shown in \cite{BC,HMZS} that the multidimensional Fourier–Bessel transform \(\mathcal{H}_{\nu}\) satisfies the following properties.
\begin{proposition}
\begin{enumerate}
\item \textbf{Riemann–Lebesgue lemma:}
For any \(f \in L^1_\nu(\mathbb{R}_+^d)\), we have \(\mathcal{H}_{\nu}(f) \in \mathcal{C}_{e,0}(\mathbb{R}^d)\) and
\[
\|\mathcal{H}_{\nu}(f)\|_{L_\nu^\infty} \leq \prod_{i=1}^d \frac{1}{2^{\nu_i}\Gamma(\nu_i+1)}\,\|f\|_{L_\nu^1}.
\]

\item The transform \(\mathcal{H}_{\nu}\) is a topological isomorphism from \(\mathcal{S}_e(\mathbb{R}_+^d)\) onto itself, and its inverse is given by the same formula:
\[
\mathcal{H}_{\nu}^{-1}(f)(x) = \mathcal{H}_{\nu}(f)(x), \qquad x \in \mathbb{R}_+^d.
\]

\item \textbf{Parseval formula:}
For all \(f,g \in \mathcal{S}_e(\mathbb{R}_+^d)\),
\[
\int_{\mathbb{R}_+^d} f(\xi) \overline{g(\xi)} \, d\mu_\nu(\xi)
= \int_{\mathbb{R}_+^d} \mathcal{H}_{\nu}(f)(\xi) \overline{\mathcal{H}_{\nu}(g)(\xi)} \, d\mu_\nu(\xi).
\]

\item For all real‑valued functions \(f,g \in L^1_\nu(\mathbb{R}_+^d)\),
\[
\int_{\mathbb{R}_+^d} \mathcal{H}_{\nu}(f)(\xi) g(\xi) \, d\mu_\nu(\xi)
= \int_{\mathbb{R}_+^d} f(\xi) \mathcal{H}_{\nu}(g)(\xi) \, d\mu_\nu(\xi).
\]

\item \textbf{Inversion formula:}
If \(f \in L^1_\nu(\mathbb{R}_+^d)\) and \(\mathcal{H}_{\nu}(f) \in L^1_\nu(\mathbb{R}_+^d)\), then
\[
f(\xi) = \int_{\mathbb{R}_+^d} \mathcal{H}_{\nu}(f)(x) \mathbb{J}_\nu(x\xi) \, d\mu_\nu(x), \qquad \xi \in \mathbb{R}_+^d.
\]

\item \textbf{Plancherel theorem:}
\(\mathcal{H}_{\nu}\) extends to a unitary operator on \(L^2_\nu(\mathbb{R}_+^d)\); for all \(f \in L^2_\nu(\mathbb{R}_+^d)\),
\[
\|\mathcal{H}_{\nu}(f)\|_{L^2_\nu} = \|f\|_{L^2_\nu}.
\]
\end{enumerate}
\end{proposition}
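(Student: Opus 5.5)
The plan is to reduce everything to dimension one and then follow the classical scheme for the Fourier transform. Both the kernel \(\mathbb{J}_\nu(x\xi)=\prod_i j_{\nu_i}(x_i\xi_i)\) and the measure \(d\mu_\nu=\prod_i \xi_i^{2\nu_i+1}d\xi_i\) are tensor products. So \(\mathcal{H}_\nu\) is the composition of the commuting one-dimensional transforms \(\mathcal{H}_{\nu_i}\), each acting in its own variable, and Fubini–Tonelli reduces most items to \(d=1\). I will verify the items in this order: (1), (4), (2), inversion on \(\mathcal{S}_e\), then (3), (6) and (5).

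\textbf{Item 1.} For \(\nu>-1/2\) I use the Poisson representation
\[
j_\nu(t)=\frac{1}{2^\nu\sqrt{\pi}\,\Gamma(\nu+\tfrac12)}\int_{-1}^1(1-s^2)^{\nu-\frac12}\cos(ts)\,ds .
\]
It gives \(|j_\nu(t)|\le j_\nu(0)=\frac{1}{2^\nu\Gamma(\nu+1)}\). This yields the \(L^\infty\) bound, and dominated convergence gives continuity. Evenness is clear, since \(j_\nu\) is an even entire function.

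For the decay at infinity I use density. On \(C_c^\infty\) even functions I exploit the eigenfunction identity \(L_x j_\nu(x\xi)=-\xi^2 j_\nu(x\xi)\), where \(L=\partial_x^2+\frac{2\nu+1}{x}\partial_x\). The operator \(L\) is symmetric with respect to \(x^{2\nu+1}dx\), and the boundary terms vanish by evenness and compact support. Hence \(\mathcal{H}_\nu(Lf)=-\xi^2\mathcal{H}_\nu f\), which gives decay. The general case follows from the uniform \(L^1\to L^\infty\) bound.

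\textbf{Item 4.} This is Fubini applied to \(\iint |f(x)||g(\xi)|\,d\mu_\nu d\mu_\nu<\infty\), using that \(\mathbb{J}_\nu\) is bounded and symmetric in \((x,\xi)\).

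\textbf{Item 2.} I use two intertwining relations:
\[
\mathcal{H}_\nu(Lf)=-|\xi|^2\mathcal{H}_\nu f,\qquad L_\xi\mathcal{H}_\nu f=-\mathcal{H}_\nu(|x|^2f).
\]
For derivatives I also use \(j_\nu'(t)=-t\,j_{\nu+1}(t)\). It gives \(\tfrac{1}{\xi}\partial_\xi\mathcal{H}_\nu f=-\mathcal{H}_{\nu+1}f\), where \(\mathcal{H}_{\nu+1}\) is taken with the measure \(x^{2\nu+3}dx\). Iterating these relations shows that \(\mathcal{H}_\nu f\) is smooth and even, and that every Schwartz seminorm of \(\mathcal{H}_\nu f\) is controlled by finitely many seminorms of \(f\). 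This gives continuity of \(\mathcal{H}_\nu\) on \(\mathcal{S}_e\). Bijectivity will follow from \(\mathcal{H}_\nu\circ\mathcal{H}_\nu=\mathrm{Id}\).

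\textbf{Inversion on \(\mathcal{S}_e\): the main obstacle.} I compute the transform of a Gaussian from the power series of \(j_\nu\):
\[
\mathcal{H}_\nu\bigl(e^{-s x^2/2}\bigr)=s^{-\nu-1}e^{-\xi^2/(2s)}.
\]
Next I use Weber's second exponential integral,
\[
\int_0^\infty e^{-\epsilon x^2}j_\nu(ax)j_\nu(bx)\,x^{2\nu+1}dx=\frac{1}{2\epsilon}e^{-\frac{a^2+b^2}{4\epsilon}}\frac{I_\nu\!\left(\frac{ab}{2\epsilon}\right)}{(ab)^{\nu}} .
\]
It shows that \(\int \mathcal{H}_\nu f(x)\,e^{-\epsilon x^2}j_\nu(x\xi)\,d\mu_\nu(x)\) equals \(\int f(y)\,P_\epsilon(\xi,y)\,d\mu_\nu(y)\), where \(P_\epsilon\ge0\) is the Hankel heat kernel. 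It has total mass \(1\), because it integrates the constant function to \(1\) by the Gaussian formula. By the large-argument asymptotics of \(I_\nu\), it concentrates at \(y=\xi\) as \(\epsilon\to0\). Letting \(\epsilon\to0\) gives \(\mathcal{H}_\nu\mathcal{H}_\nu f=f\). This is the step where the lack of translations must be replaced by the heat kernel, and I expect verifying the approximate-identity properties of \(P_\epsilon\) to be the delicate point.

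\textbf{Items 3, 6 and 5.} Parseval follows from item 4 combined with inversion: apply item 4 to the real and imaginary parts of \(f\) and \(\mathcal{H}_\nu\bar g\), and note that \(\mathcal{H}_\nu\) commutes with conjugation because the kernel is real. Plancherel then follows by density of \(\mathcal{S}_e\) in \(L^2_\nu\), extending the isometry, and surjectivity comes from \(\mathcal{H}_\nu^2=\mathrm{Id}\). Finally, item 5 is obtained by running the same Gaussian-summability argument for \(f\in L^1_\nu\). The left side converges to \(f\) in \(L^1_\nu\), hence a.e., by the approximate-identity property. The right side converges pointwise to \(\int\mathcal{H}_\nu f\,\mathbb{J}_\nu\,d\mu_\nu\) by dominated convergence, since \(\mathcal{H}_\nu f\in L^1_\nu\).
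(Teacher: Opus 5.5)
Your proof is correct, but it does not follow the paper, which gives no argument for this proposition and simply imports it from \cite{BC,HMZS}. You instead derive everything from classical special-function facts: tensorization to $d=1$, the Poisson integral for the constant $j_\nu(0)=1/(2^\nu\Gamma(\nu+1))$, the Bessel equation and $j_\nu'(t)=-t\,j_{\nu+1}(t)$ for the action on $\mathcal{S}_e$, and Weber's second exponential integral for inversion. Your normalizations match the paper's. With this kernel and $d\mu_\nu$ the transform is its own inverse, and $\mathcal{H}_\nu(e^{-sx^2/2})=s^{-\nu-1}e^{-\xi^2/(2s)}$ is right. The unit mass of $P_\epsilon(\xi,\cdot)$, which you only gesture at, follows by pairing with $e^{-\eta y^2}$ and letting $\eta\to0$.

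The citation buys brevity. Your argument buys transparency exactly where translation invariance is missing: up to a constant factor, $P_\epsilon(\xi,\cdot)$ is the generalized translate $T^\nu_\xi$ of the Gaussian $\mathcal{H}_\nu(e^{-\epsilon|\cdot|^2})$, i.e.\ the structure introduced in Section~\ref{Sec2}. It also yields two by-products. First, the formula $\partial_x\mathcal{H}_\nu g=-x\,\mathcal{H}_{\nu+1}g$, which the paper uses without proof for the Bernstein-type inequality \eqref{Eq3.6}. Second, a sharper reading of the statement: item~4 needs no real-valuedness, and item~5 holds almost everywhere. That is exactly what your $L^1_\nu$ argument gives, since positivity, symmetry and unit mass make $P_\epsilon$ an $L^1_\nu$ contraction, and Scheff\'e's lemma handles nonnegative $f\in C_c$.

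You leave one step implicit. The seminorms $\rho_m$ of $\mathcal{S}_e$ are built from $\partial^\gamma$, while your relations control powers of $\xi^{-1}\partial_\xi$. The conversion is elementary: $\partial_\xi^k$ is a combination of $(\xi^{-1}\partial_\xi)^i$, $i\le k$, with polynomial coefficients, and $x^{-1}f'(x)=\int_0^1 f''(tx)\,dt$ for even $f$. It should still be stated.
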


\subsection{Generalized convolution product and Paley–Wiener-type  theorem}
We first recall the generalized translation operator, which is used to define the convolution product.

For all \(x, y \in \mathbb{R}_+^d\), the generalized translation operator \(T_y^\nu\) is defined (see \cite{BC}) by
\[
T_y^\nu (f)(x) = \int_{\mathbb{R}_+^d} W_\nu(x, y, z) \, f(z) \, d\mu_\nu(z),
\]
where the kernel \(W_\nu\) is given by
\[
W_\nu(x, y, z) =
\begin{cases}
\displaystyle \frac{c_\nu}{2^{2|\nu|-d}}
\frac{\prod_{i=1}^{d} \bigl[ (z_i^2 - (x_i - y_i)^2)((x_i+y_i)^2 - z_i^2) \bigr]^{\nu_i - 1/2}}
{\prod_{i=1}^{d} (x_i y_i z_i)^{2\nu_i}},
& \text{if } z \in \prod_{i=1}^{d} [|x_i - y_i|, x_i + y_i],\\[1em]
0, & \text{otherwise},
\end{cases}
\]
with
\[
c_\nu = \prod_{i=1}^{d} \frac{\Gamma(\nu_i + 1)}{\Gamma(\frac{1}{2})\,\Gamma(\nu_i + \frac{1}{2})}.
\]
An alternative representation is
\[
T_y^\nu (f)(x)=c_\nu\int_{[0,\pi]^d}
f\Bigl(\sqrt{x_1^2+y_1^2-2x_1y_1\cos\theta_1},\dots,
\sqrt{x_d^2+y_d^2-2x_dy_d\cos\theta_d}\Bigr)
\prod_{i=1}^d(\sin\theta_i)^{2\nu_i}\,d\theta.
\]

The generalized Bessel convolution product \(*_\nu\) for suitable functions \(f\) and \(g\) is defined for all \(y \in \mathbb{R}^d_+\) by
\[
(f *_\nu g)(y) = \int_{\mathbb{R}^d_+} T^\nu_y (f)(t)\, g(t)\, d\mu_\nu(t).
\]
This convolution is commutative, associative, and satisfies the following properties (see \cite{WA}).

\begin{proposition}
\begin{enumerate}
\item \textbf{Young’s inequality:}
Let \(p,q,r \in [1,+\infty]\) satisfy
\[
\frac{1}{p} + \frac{1}{q} - \frac{1}{r} = 1.
\]
Then for all \(f \in L^p_\nu(\mathbb{R}_+^d)\) and \(g \in L^q_\nu(\mathbb{R}_+^d)\),
\(f *_\nu g \in L^r_\nu(\mathbb{R}_+^d)\) and
\[
\|f *_\nu g\|_{L_\nu^r} \leq \|f\|_{L_\nu^p}\,\|g\|_{L_\nu^q}.
\]

\item For all \(f,g \in L^1_\nu(\mathbb{R}_+^d)\) (resp.\ \(f,g \in \mathcal{S}_e(\mathbb{R}_+^d)\)),
\(f *_\nu g \in L^1_\nu(\mathbb{R}_+^d)\) (resp.\ \(\mathcal{S}_e(\mathbb{R}_+^d)\)) and
\[
\mathcal{H}_{\nu}(f *_\nu g) = \mathcal{H}_{\nu}(f)\,\mathcal{H}_{\nu}(g).
\]

\item For all \(f,g \in L^2_\nu(\mathbb{R}^d_+)\),
\[
\mathcal{H}_{\nu}(fg) = \mathcal{H}_{\nu}(f) *_\nu \mathcal{H}_{\nu}(g).
\]
\end{enumerate}
\end{proposition}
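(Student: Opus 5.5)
The plan is to derive all three properties from two structural facts about the generalized translation $T^\nu_y$. Throughout, fix $\nu\in(-1/2,\infty)^d$. Since $\mathbb J_\nu$, $W_\nu$ and $\mu_\nu$ are all tensor products, it suffices to argue coordinatewise, i.e.\ essentially for $d=1$.

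\textbf{Step 1: the kernel $W_\nu$.} In the regime $\nu_i>-1/2$ the kernel $W_\nu(x,y,z)\ge 0$. It is symmetric in $(x,y,z)$: the factor $\bigl[(z^2-(x-y)^2)((x+y)^2-z^2)\bigr]$ is, up to a constant, the square of the area of the triangle with sides $x,y,z$, and the denominator $(xyz)^{2\nu}$ is symmetric. Next I will use the classical Sonine--Gegenbauer product formula
\[
j_\nu(xs)\,j_\nu(ys)=\int_0^\infty W_\nu(x,y,z)\,j_\nu(zs)\,d\mu_\nu(z),
\]
up to the normalizing constants fixed by $c_\nu$. I expect this to follow from the angular representation of $T^\nu_y$ together with Gegenbauer's addition formula. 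Equivalently,
\[
T^\nu_y\bigl(\mathbb J_\nu(\cdot\,s)\bigr)(x)=\mathbb J_\nu(xs)\,\mathbb J_\nu(ys).
\]
Evaluating at $s=0$ gives $\int W_\nu(x,y,z)\,d\mu_\nu(z)=1$. Hence $T^\nu_y$ is a positive averaging operator, and by Jensen and symmetry
\[
\|T^\nu_y f\|_{L^p_\nu}\le \|f\|_{L^p_\nu}\quad\text{for all } 1\le p\le\infty .
\]
The $L^1$ case is Fubini combined with symmetry in $(x,z)$; the $L^\infty$ case is trivial.

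\textbf{Step 2: Young's inequality.} By commutativity of $*_\nu$ and the symmetry of $W_\nu$, we have $(f*_\nu g)(y)=\iint f(z)g(t)W_\nu(t,y,z)\,d\mu_\nu(z)\,d\mu_\nu(t)$. I will write $|f(z)g(t)|$ as a product of three factors $|f|^{p/r}|g|^{q/r}$, $|f|^{1-p/r}$, $|g|^{1-q/r}$ and apply Hölder with exponents $r$, $pr/(r-p)$, $qr/(r-q)$ against the probability measures $W_\nu(t,y,z)\,d\mu_\nu(z)$ and $d\mu_\nu(t)$. Each marginal of $W_\nu$ integrates to $1$ by Step 1, so this yields the bound $\|f\|_p\|g\|_q$. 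This is the standard proof of Young's inequality and needs only the unit-mass and symmetry properties. Alternatively, one can interpolate between $p=1$ (Minkowski plus Step 1) and the endpoint $q=p'$, $r=\infty$ (Hölder plus Step 1).

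\textbf{Step 3: $\mathcal H_\nu(f*_\nu g)=\mathcal H_\nu f\,\mathcal H_\nu g$.} For $f,g\in L^1_\nu$, Step 2 with $p=q=r=1$ gives $f*_\nu g\in L^1_\nu$. I then compute
\[
\int (f*_\nu g)(y)\,\mathbb J_\nu(ys)\,d\mu_\nu(y)
\]
using Fubini, which is justified by absolute integrability since $|\mathbb J_\nu|\le C$. I move $T^\nu$ onto the kernel through the symmetry of $W_\nu$ and then apply the product formula from Step 1. The expression factors as $\mathcal H_\nu f(s)\,\mathcal H_\nu g(s)$. The Schwartz statement follows because $\mathcal H_\nu$ is an isomorphism of $\mathcal S_e$: the product $\mathcal H_\nu f\,\mathcal H_\nu g$ lies in $\mathcal S_e$, and its inverse transform coincides with $f*_\nu g$ by uniqueness in $L^1_\nu$.

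\textbf{Step 4: $\mathcal H_\nu(fg)=\mathcal H_\nu f*_\nu\mathcal H_\nu g$ for $f,g\in L^2_\nu$.} Here $fg\in L^1_\nu$, so $\mathcal H_\nu(fg)(x)=\int f(s)\,g(s)\,\mathbb J_\nu(xs)\,d\mu_\nu(s)$. By Step 1 and Step 3, applied first to Schwartz functions and then extended by density using $\|T_x\|_{L^2\to L^2}\le 1$ from Step 1, we get $\mathcal H_\nu\bigl(T^\nu_x\mathcal H_\nu f\bigr)(s)=\mathbb J_\nu(xs)f(s)$. Because the kernel is real and $\mathcal H_\nu$ is its own inverse, Parseval in its bilinear form $\int uv\,d\mu_\nu=\int \mathcal H_\nu u\,\mathcal H_\nu v\,d\mu_\nu$ then gives
\[
(\mathcal H_\nu f*_\nu\mathcal H_\nu g)(x)=\int T^\nu_x(\mathcal H_\nu f)\,\mathcal H_\nu g\,d\mu_\nu=\int \mathbb J_\nu(xs)\,f(s)\,g(s)\,d\mu_\nu(s)=\mathcal H_\nu(fg)(x).
\]

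\textbf{Main obstacle.} The one genuinely nontrivial input is the product formula in Step 1, with the exact constant $c_\nu/2^{2|\nu|-d}$ matching the given normalization of $j_\nu$. I would verify it by inserting the Poisson integral representation of $j_\nu$ into the angular form of $T^\nu_y$, or else cite Watson's treatise for Sonine's formula. Everything else is Fubini and Hölder.
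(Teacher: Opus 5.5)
The paper gives no proof of this proposition; it is quoted from the literature. Your route is the standard one: positivity, symmetry and unit mass of $W_\nu$ from the Sonine--Gegenbauer product formula, then Young via the three-factor H\"older argument, the convolution theorem via Fubini, and part (iii) via the bilinear Parseval identity applied to $T^\nu_x\mathcal{H}_\nu f$. The analytic manipulations in Steps 2--4 are sound.

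The genuine problem is the normalization in Step 1, which you flagged as the crux but then resolved incorrectly. With the paper's kernel $j_\nu(t)=J_\nu(t)/t^{\nu}$ one has $\mathbb{J}_\nu(0)=\prod_i\bigl(2^{\nu_i}\Gamma(\nu_i+1)\bigr)^{-1}\neq 1$ in general. So if $T^\nu_y(\mathbb{J}_\nu(\cdot\,s))(x)=\mathbb{J}_\nu(xs)\mathbb{J}_\nu(ys)$ held, setting $s=0$ would give $\int W_\nu(x,y,z)\,d\mu_\nu(z)=\mathbb{J}_\nu(0)$, not $1$. Your two claims in Step 1 are therefore incompatible. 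For the kernel as actually given, the mass is $1$: the angular representation together with $\int_0^\pi(\sin\theta)^{2\nu}\,d\theta=\sqrt{\pi}\,\Gamma(\nu+\frac12)/\Gamma(\nu+1)$ gives $T^\nu_y\mathbf{1}=\mathbf{1}$. Hence the correct product formula is
\[
T^\nu_y\bigl(\mathbb{J}_\nu(\cdot\,s)\bigr)(x)=\kappa_\nu\,\mathbb{J}_\nu(xs)\,\mathbb{J}_\nu(ys),\qquad \kappa_\nu:=\prod_{i=1}^d 2^{\nu_i}\Gamma(\nu_i+1).
\]
As a check, take $d=1$, $\nu=\frac12$. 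Then $T^\nu_y$ is the spherical mean in $\mathbb{R}^3$, which sends $\sin(s\,\cdot)/(s\,\cdot)$ to $\frac{\sin xs}{xs}\cdot\frac{\sin ys}{ys}$. Since $j_{1/2}(t)=\sqrt{2/\pi}\,\sin t/t$, this gives $\kappa_{1/2}=\sqrt{\pi/2}$.

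Consequently your Step 2 survives, since Young with constant $1$ uses only unit mass and symmetry. But carrying out Steps 3 and 4 honestly yields
\[
\mathcal{H}_\nu(f*_\nu g)=\kappa_\nu\,\mathcal{H}_\nu f\,\mathcal{H}_\nu g,\qquad \mathcal{H}_\nu f*_\nu\mathcal{H}_\nu g=\kappa_\nu\,\mathcal{H}_\nu(fg).
\]
So parts (ii) and (iii), with the conventions fixed in this paper, hold only up to the factor $\kappa_\nu$; they hold exactly only when $\kappa_\nu=1$, e.g.\ $\nu=0$. The identities without constants belong to the normalized convention used in the sources: kernel $2^{\nu}\Gamma(\nu+1)J_\nu(t)/t^{\nu}$ and measure $x^{2\nu+1}dx/(2^{\nu}\Gamma(\nu+1))$. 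That convention produces the same transform $\mathcal{H}_\nu$ but a convolution smaller by $\kappa_\nu$.

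To make your proof correct you must either carry $\kappa_\nu$ through, or define $*_\nu$ with respect to $d\mu_\nu/\kappa_\nu$. As written, the assertion that evaluating at $s=0$ gives $\int W_\nu\,d\mu_\nu=1$ via your product formula is false, and the argument does not establish (ii)--(iii) in the stated form.
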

We now consider the behavior of the support of the generalized Bessel convolution product for functions in \(d\) dimensions. The next lemma describes this behavior in the multidimensional setting.

\begin{lemma}\label{L15}
	Let \(f,g \in L_\nu^2(\mathbb{R}_+^d)\). Then
\begin{equation}\label{EEEE21}
\operatorname{supp}(f *_\nu g) \subset
\left\{
x\in\mathbb{R}_+^d:\;
\begin{aligned}
&\text{there exist } y\in\operatorname{supp}f,\; z\in\operatorname{supp}g \\
&\text{such that } |y_i-z_i|\le x_i \le y_i+z_i \text{ for every }i
\end{aligned}
\right\}.
\end{equation}

For the later one-dimensional application,  
\[
\operatorname{supp}g\subset [0,r]
\implies
\operatorname{supp}(f *_\nu g) \subset
\bigl(\operatorname{supp}f + [-r,r]\bigr)\cap\mathbb{R}_+.
\]

\end{lemma}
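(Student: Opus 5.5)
The plan is to work directly from the definition of the convolution through the kernel $W_\nu$ of the generalized translation operator. For $x\in\mathbb{R}^d_+$ we have
\[
(f*_\nu g)(x)=\int_{\mathbb{R}^d_+} T^\nu_x(f)(t)\,g(t)\,d\mu_\nu(t)
=\int_{\mathbb{R}^d_+}\int_{\mathbb{R}^d_+} W_\nu(t,x,z)\,f(z)\,g(t)\,d\mu_\nu(z)\,d\mu_\nu(t).
\]
By the explicit formula, $W_\nu(t,x,z)$ vanishes unless $|t_i-x_i|\le z_i\le t_i+x_i$ for every $i$. This triple condition on the coordinates of $(x,t,z)$ is symmetric: it says that $x_i,t_i,z_i$ satisfy the triangle inequalities. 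Hence it is equivalent to $|z_i-t_i|\le x_i\le z_i+t_i$.

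First I would justify that the double integral makes sense for $f,g\in L^2_\nu$. Pointwise in $x$, $T^\nu_x$ is a contraction on $L^2_\nu$: by the angular representation it is an average of compositions, and its $L^p_\nu$ boundedness is standard. So the integrand is absolutely integrable for a.e.\ $x$ by Cauchy–Schwarz. Alternatively one can use Young's inequality with $p=q=2$, $r=\infty$, so that $f*_\nu g$ is a bounded continuous function and the formula holds pointwise. The positivity of $W_\nu$ (for $\nu_i>-1/2$) also allows Tonelli to be applied to $|f|,|g|$.

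Next, denote by $S$ the set on the right of \eqref{EEEE21}. Fix $x\notin S$. Then for every $z\in\operatorname{supp} f$ and $t\in\operatorname{supp} g$ some coordinate violates $|z_i-t_i|\le x_i\le z_i+t_i$, so $W_\nu(t,x,z)=0$ there. Off $\operatorname{supp}f\times\operatorname{supp}g$ the product $f(z)g(t)$ vanishes a.e. Thus the integrand vanishes a.e.\ and $(f*_\nu g)(x)=0$.

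The main subtlety is upgrading this pointwise vanishing to the support statement. The complement of $S$ must be shown open, so that $f*_\nu g$ vanishes on an open set. If the supports are compact, $S$ is closed as the continuous image of a compact set intersected with a closed relation. In general I would take supports in the essential (closed) sense and note that $x\notin S$ has a neighbourhood avoiding $S$. Indeed, the conditions are non-strict inequalities in continuous functions, so their failure is an open condition, uniformly on bounded portions. For unbounded supports I would truncate $f$ and $g$ to balls, use $L^2_\nu$ continuity of the convolution into $L^\infty$, and pass to the limit.

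The one-dimensional corollary then follows directly. If $t\in[0,r]$ and $|z-t|\le x\le z+t$, then $x\in[z-r,z+r]\cap\mathbb{R}_+$ with $z\in\operatorname{supp} f$, which gives $\operatorname{supp}(f*_\nu g)\subset(\operatorname{supp}f+[-r,r])\cap\mathbb{R}_+$.
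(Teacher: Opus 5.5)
Your core step matches the paper's. The paper writes $f*_\nu g$ in the angular form, with $g$ evaluated at $\sqrt{x_i^2+y_i^2-2x_iy_i\cos\theta_i}\in[|x_i-y_i|,x_i+y_i]$; you use the kernel $W_\nu$ and the symmetry of the triangle inequalities. Either way one gets $(f*_\nu g)(x)=0$ for every $x\notin S$, where $S$ is the set on the right of the inclusion. You are also right that this only yields $\operatorname{supp}(f*_\nu g)\subset\overline S$ unless $S$ is closed, a point the paper's proof does not address.

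Your fix for unbounded supports does not work. For a fixed pair $(y,z)$ the failure of the non-strict inequalities is open in $x$, but not uniformly over unbounded pairs. Truncating $f,g$ and passing to the uniform limit only reproduces $f*_\nu g=0$ off $S$, i.e.\ the inclusion in $\overline S$ again. No argument can do better, because the inclusion in $S$ is false in general. Here is a counterexample in $d=1$:
\begin{itemize}
\item Let $\epsilon_n=n^{-2\nu-4}$, $F=\bigcup_{n\ge3}[2n,2n+\epsilon_n]$ and $G=\bigcup_{n\ge3}[2n+\tfrac12+\tfrac1n,\,2n+\tfrac12+\tfrac1n+\epsilon_n]$. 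Set $f=\mathbf 1_F$ and $g=\mathbf 1_G$; both lie in $L^2_\nu$ since $\sum_n n^{2\nu+1}\epsilon_n<\infty$.
\item For $y\in F$, $z\in G$ one has $|y-z|\ge\tfrac12+\tfrac1n-\epsilon_n$ within the $n$-th blocks and $|y-z|\ge\tfrac76-\epsilon_n$ across different blocks. Hence $|y-z|>\tfrac12$ always, and $\tfrac12\notin S$.
\item Take $x_n=\tfrac12+\tfrac1n+2\epsilon_n\to\tfrac12$. Every pair from the $n$-th blocks satisfies the strict triangle inequalities, where $W_\nu>0$. So $(f*_\nu g)(x_n)>0$, and therefore $\tfrac12\in\operatorname{supp}(f*_\nu g)\setminus S$.
\end{itemize}

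What you (and the paper) actually prove is $\operatorname{supp}(f*_\nu g)\subset\overline S$. Moreover $\overline S=S$ as soon as one of $\operatorname{supp}f$, $\operatorname{supp}g$ is compact; you only need one, not both as in your proposal. Indeed, if $x^k\to x$ with witnesses $(y^k,z^k)$ and $(z^k)$ is bounded, then $y^k_i\le x^k_i+z^k_i$ is bounded too. A limit point of $(y^k,z^k)$ then witnesses $x\in S$.

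That version suffices for the one-dimensional corollary, where $\operatorname{supp}g\subset[0,r]$. Alternatively, $(\operatorname{supp}f+[-r,r])\cap\mathbb{R}_+$ is already closed in $\mathbb{R}_+$, so your last step goes through directly. It also suffices for every use of the lemma in Section~4, where one factor always has compact support. So the lemma should be stated with $\overline S$, or with a compactness assumption on one of the two supports.
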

\begin{proof}
	We recall that
	\begin{align*}
		f *_\nu g(x)
		&= c_\nu \int_{\mathbb{R}_+^d} \int_{[0,\pi]^d} 
		f(y_1,\dots,y_d) \,
		g\Bigl(\sqrt{x_1^2+y_1^2-2x_1y_1\cos\theta_1}, \dots,
		\sqrt{x_d^2+y_d^2-2x_dy_d\cos\theta_d}\Bigr) \\
		&\qquad\qquad \times \prod_{i=1}^d (\sin \theta_i)^{2\nu_i} \, d\theta \, d\mu_\nu(y).
	\end{align*}
  For each \(i = 1, \dots, d\), notice that 
	\[
	|x_i-y_i| \leq \sqrt{x_i^2 + y_i^2 - 2 x_i y_i \cos \theta_i} \leq 
	x_i+y_i,
	\]
	which implies  \eqref{EEEE21}.
This establishes the claimed support inclusion.
\end{proof}

In the one-dimensional case, the following Paley-Wiener  type theorem has been established in the literature.
\begin{lemma}[\cite{JLG}]\label{PW_1D}
Let $\nu > -1/2$. A function $F$ admits the representation
\begin{equation}
F(z) = \int_0^a z^{-\nu} \sqrt{t} J_\nu(zt) \gamma(t) \, dt
\end{equation}
with $\gamma \in L^2(0,a)$ if and only if $F$ is an even entire function of exponential type $\sigma \le a$ such that $z^{\nu+1/2} F(z) \in L^2(0, +\infty)$.
\end{lemma}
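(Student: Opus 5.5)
Both directions are handled by the unitary classical Hankel transform
\[
\mathbb{H}_\nu g(x)=\int_0^\infty \sqrt{xt}\,J_\nu(xt)\,g(t)\,dt ,
\]
which is unitary and involutive on $L^2(0,\infty)$ for $\nu>-1/2$. This is the normalization equivalent to the Plancherel theorem for $\mathcal H_\nu$ recalled above. Write $\varphi(x)=x^{\nu+1/2}F(x)$. Then the representation in Lemma \ref{PW_1D} reads exactly $\varphi=\mathbb{H}_\nu(\gamma\mathbf 1_{(0,a)})$. Necessity is the easy half; sufficiency is a contour-shift argument.

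\emph{Necessity.} Use $z^{-\nu}J_\nu(zt)=t^{\nu}j_\nu(zt)$, where $j_\nu$ is the even entire function defined in the introduction. From the power series one gets the standard bound
\[
|j_\nu(w)|\le C_\nu e^{|\operatorname{Im} w|}.
\]
Integrating against $\sqrt t\,\gamma(t)$ on the compact interval $[0,a]$, with $\gamma\in L^2\subset L^1$ there, shows three things: $F$ is entire, $F$ is even, and $|F(z)|\le C e^{a|\operatorname{Im} z|}$. Hence $F$ has type $\sigma\le a$. The $L^2$ statement follows because $\varphi=\mathbb{H}_\nu(\gamma\mathbf 1_{(0,a)})\in L^2(0,\infty)$ by unitarity.

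\emph{Sufficiency.} Now let $F$ be even, entire, of exponential type $\le a$, with $\varphi\in L^2(0,\infty)$. Put $\gamma:=\mathbb{H}_\nu\varphi\in L^2(0,\infty)$. By involutivity $\varphi=\mathbb{H}_\nu\gamma$, so it suffices to show $\gamma(t)=0$ for a.e.\ $t>a$.

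I would first regularize. Set
\[
F_\varepsilon(z)=F(z)\Bigl(\frac{\sin \varepsilon z}{\varepsilon z}\Bigr)^{N}
\]
with $N$ large. This function is still even and entire, of type $\le a+N\varepsilon$, and it decays polynomially on $\mathbb R$. Combining this decay with a Phragm\'en--Lindel\"of argument in each quadrant gives
\[
|F_\varepsilon(z)|\le C_\varepsilon (1+|z|)^{-M}e^{(a+N\varepsilon)|\operatorname{Im} z|}.
\]
Also $\varphi_\varepsilon\to\varphi$ in $L^2$, hence $\gamma_\varepsilon\to\gamma$ in $L^2$. So it suffices to show that $\gamma_\varepsilon(t)=0$ for $t>a+N\varepsilon$.

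For such $t$, write
\[
\gamma_\varepsilon(t)=t^{1/2}\int_0^\infty x^{\nu+1}F_\varepsilon(x)J_\nu(xt)\,dx
\]
and split $J_\nu=\tfrac12\bigl(H^{(1)}_\nu+H^{(2)}_\nu\bigr)$.
\begin{itemize}
\item Rotate the $H^{(1)}_\nu$ term onto the positive imaginary axis. There $|H^{(1)}_\nu(xt)|\lesssim e^{-t\operatorname{Im}x}$, which beats the growth $e^{(a+N\varepsilon)\operatorname{Im} x}$, so the arcs at infinity contribute nothing.
\item Rotate the $H^{(2)}_\nu$ term onto the negative imaginary axis in the same way.
\end{itemize}
On the imaginary axis, the evenness of $F_\varepsilon$ and the continuation formula $H^{(1)}_\nu(e^{i\pi}w)=-e^{-i\pi\nu}H^{(2)}_\nu(w)$ (together with the matching branch factor of $x^{\nu+1}$) show that the two rotated integrals cancel. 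Hence $\gamma_\varepsilon(t)=0$.

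\emph{Main obstacle.} I expect the hardest part to be the bookkeeping in this contour step, specifically:
\begin{itemize}
\item making sure the branch factors $x^{\nu+1}$ and the Hankel continuation relation really produce cancellation for non-integer $\nu$;
\item controlling the arc near $0$, where $x^{\nu+1}H^{(j)}_\nu(xt)=O(x^{1+\nu-|\nu|})$ is integrable because $\nu>-1/2$;
\item obtaining the sector growth bounds on $F_\varepsilon$ needed to discard the large arcs.
\end{itemize}
A fallback, if the branch bookkeeping becomes messy, is to test $\gamma$ against $g\in C_c^\infty((a,\infty))$ and move the contour in $\langle\varphi,\mathbb{H}_\nu g\rangle$ instead.
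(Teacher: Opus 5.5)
The paper does not prove Lemma~\ref{PW_1D}; it quotes it from \cite{JLG} and uses it only as an input to Lemma~\ref{TPWS}. Your regularize-and-rotate argument is therefore a self-contained substitute rather than a variant of anything in the text, and it is correct.

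\textbf{The cancellation closes.} The branch bookkeeping you flagged works for every $\nu>-1/2$. Rotating onto $x=iy$ and $x=-iy$ produces the factors $e^{\pm i\pi(\nu+2)/2}$. Use $F_\varepsilon(-iy)=F_\varepsilon(iy)$, $H^{(1)}_\nu(is)=\frac{2}{\pi i}e^{-i\pi\nu/2}K_\nu(s)$ and $H^{(2)}_\nu(-is)=-\frac{2}{\pi i}e^{i\pi\nu/2}K_\nu(s)$. Then the two rotated integrals equal $-\frac{2}{\pi i}\mathcal{K}$ and $+\frac{2}{\pi i}\mathcal{K}$, where $\mathcal{K}=\int_0^\infty y^{\nu+1}F_\varepsilon(iy)K_\nu(ty)\,dy$. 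This integral converges precisely because $t>a+N\varepsilon$, so the two pieces cancel.

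\textbf{What your route buys.} Citing \cite{JLG} is economical. Your argument is self-contained and also transfers directly to the converse half of Lemma~\ref{TPWS}. There the hypothesis $|\mathcal{H}_\nu f(z)|\le Ce^{\sigma\|\operatorname{Im}z\|_1}$ already supplies the needed bounds. Multiply by $\prod_i(\sin\varepsilon z_i/\varepsilon z_i)^N$, rotate in one variable with the others real and fixed, and apply Fubini. This shows the regularized inverse transform vanishes whenever some $x_i>\sigma+N\varepsilon$, which is cleaner than the paper's partial-inversion sketch.

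\textbf{A step you need to fill.} Your claim that $F_\varepsilon$ decays polynomially on $\mathbb{R}$ presupposes that $F$ is bounded on $\mathbb{R}$. The hypothesis $x^{\nu+1/2}F\in L^2(0,\infty)$ does not by itself give a pointwise bound. Fill it as follows. Since $\nu+1/2>0$ and $F$ is even and continuous, $F\in L^2(\mathbb{R})$. The classical Paley--Wiener theorem then gives $F(z)=\int_{-a}^{a}g(s)e^{isz}\,ds$ with $g\in L^2(-a,a)$, hence $|F(z)|\le\sqrt{2a}\,\|g\|_{L^2}\,e^{a|\operatorname{Im}z|}$. Combined with $|\sin w/w|\le\min(1,|w|^{-1})e^{|\operatorname{Im}w|}$, this yields $|F_\varepsilon(z)|\le C(1+\varepsilon|z|)^{-N}e^{(a+N\varepsilon)|\operatorname{Im}z|}$ directly. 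The quadrant-wise Phragm\'en--Lindel\"of step is then unnecessary.

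\textbf{Smaller points.}
\begin{itemize}
\item The power series of $j_\nu$ only gives growth like $e^{|w|}$. That already suffices for type $\le a$. The sharper bound $|j_\nu(w)|\le C_\nu e^{|\operatorname{Im}w|}$ comes from the Poisson integral, as in the proof of Lemma~\ref{TPWS}.
\item $\gamma_\varepsilon$ coincides with your absolutely convergent integral because $\varphi_\varepsilon\in L^1\cap L^2$ and $\sqrt{w}\,J_\nu(w)$ is bounded on $(0,\infty)$ for $\nu\ge-1/2$. You should say this.
\item At the end, continuity and the identity theorem upgrade the a.e.\ identity on $(0,\infty)$ to all $z\in\mathbb{C}$.
\end{itemize}
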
 
Inspired by the above result, we establish a higher-dimensional Paley--Wiener theorem for the transform $\mathcal{H}_{\nu}$ that will be utilized in Section~3. Due to the tensor product structure of the $d$-dimensional Hankel transform, it is natural to consider functions supported on the hypercube $[0, \sigma]^d$, which leads to an exponential growth condition characterized by the $\ell_1$-norm.
\begin{lemma}\label{TPWS}
Let $\nu \in (-1/2, \infty)^d$ and $f \in L_\nu^2(\mathbb{R}^d_+)$. Then $\operatorname{supp} f \subset [0, \sigma]^d$ if and only if $\mathcal{H}_{\nu} f$ extends to an entire function on $\mathbb{C}^d$ that is even with respect to each variable $z_i$, belongs to $L_\nu^2(\mathbb{R}^d_+)$, and satisfies the growth condition
\begin{equation}\label{growth_cond}
|\mathcal{H}_{\nu} f (z)| \leq C e^{\sigma \|\operatorname{Im} z\|_1}
\end{equation}
for some constant $C > 0$, where $\|\operatorname{Im} z\|_1 = \sum_{i=1}^{d}|\operatorname{Im} z_i|$.
\end{lemma}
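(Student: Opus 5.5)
The plan is to prove both implications of Lemma~\ref{TPWS} by reducing to one variable at a time through the tensor structure of the kernel $\mathbb{J}_\nu(x\xi)=\prod_i j_{\nu_i}(x_i\xi_i)$.

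\textbf{Necessity.} Suppose $\operatorname{supp} f\subset[0,\sigma]^d$. Because $\mu_\nu([0,\sigma]^d)<\infty$, Cauchy--Schwarz gives $f\in L^1_\nu$. We then set
\[
F(z)=\int_{[0,\sigma]^d}\prod_{i=1}^d j_{\nu_i}(z_i\xi_i)\,f(\xi)\,d\mu_\nu(\xi),\qquad z\in\mathbb{C}^d .
\]
Each $j_{\nu_i}$ is an even entire function, by its power series. We will use the bound $|j_{\nu_i}(w)|\le \frac{1}{2^{\nu_i}\Gamma(\nu_i+1)}e^{|\operatorname{Im} w|}$, which follows from Poisson's integral $j_{\nu}(w)=c\int_0^\pi \cos(w\cos\theta)\sin^{2\nu}\theta\,d\theta$ for $\nu>-1/2$. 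With this bound we differentiate under the integral, or apply Morera and Hartogs in each variable, to conclude that $F$ is entire and even in each $z_i$. Since $\xi_i\le\sigma$ on the support, we also get $|F(z)|\le C\|f\|_{L^1_\nu}e^{\sigma\|\operatorname{Im} z\|_1}$. Finally, $F$ restricted to $\mathbb{R}^d_+$ equals $\mathcal{H}_\nu f$, which lies in $L^2_\nu$ by Plancherel.

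\textbf{Sufficiency.} This direction is the main obstacle. We argue by induction on $d$, the case $d=1$ being Lemma~\ref{PW_1D}. Write $F=\mathcal{H}_\nu f$ and let $g=\mathcal{H}_\nu F=f$, using that $\mathcal{H}_\nu$ is an involution on $L^2_\nu$. It suffices to show $\operatorname{supp} f\subset\{\xi_1\le\sigma\}$, and likewise for the other coordinates.

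The plan is to apply the one-dimensional result in the variable $z_1$ while the remaining variables are frozen on a dense set. First, for a.e.\ $x'=(x_2,\dots,x_d)\in\mathbb{R}^{d-1}_+$, Fubini gives $z_1\mapsto F(z_1,x')$ with $z_1^{\nu_1+1/2}F(\cdot,x')\in L^2(0,\infty)$. This function is even, entire, and bounded by $Ce^{\sigma\|\operatorname{Im} x'\|_1}e^{\sigma|\operatorname{Im} z_1|}=Ce^{\sigma|\operatorname{Im} z_1|}$, since $x'$ is real. By Lemma~\ref{PW_1D}, translated into the $\mathcal{H}_{\nu_1}$ normalization ($\sqrt t J_\nu(zt)z^{-\nu}$ versus $j_\nu(zt)t^{2\nu+1}$, which is a change of density), the one-dimensional transform $\mathcal{H}_{\nu_1}[F(\cdot,x')]$ is supported in $[0,\sigma]$.

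Next, the partial transform in the first variable commutes with the full transform. Precisely, $\mathcal{H}_\nu=\mathcal{H}_{\nu_1}\otimes\mathcal{H}_{\nu'}$ on $L^2_{\nu_1}\hat\otimes L^2_{\nu'}$. This is checked on Schwartz tensor products and extended by unitarity. Writing $G(\xi_1,x')=\mathcal{H}_{\nu_1}[F(\cdot,x')](\xi_1)$, we have $G=0$ for $\xi_1>\sigma$ and a.e.\ $x'$. Hence $f=(\mathrm{Id}\otimes\mathcal{H}_{\nu'})G$ also vanishes for $\xi_1>\sigma$.

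The delicate point is the measurability/Fubini bookkeeping: we must justify that the partial transform of an $L^2$ function is computed slicewise. We will handle this by approximating $F$ in $L^2_\nu$ and passing to an a.e.\ convergent subsequence on slices. Repeating the argument in each coordinate yields $\operatorname{supp} f\subset[0,\sigma]^d$.
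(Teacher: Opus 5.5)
Your proof is correct. The necessity direction matches the paper's: Poisson's integral for $j_{\nu_i}$, holomorphy via differentiation under the integral and Hartogs, and the bound $e^{\sigma\|\operatorname{Im} z\|_1}$ from $\xi_i\le\sigma$.

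For sufficiency you take a cleaner route than the paper. The paper freezes \emph{complex} parameters $z'\in\mathbb{C}^{d-1}$ and takes the partial inverse transform $h_1(x_1,z')$ in the first variable. It then asserts that $h_1$ stays entire in $z'$ with the same exponential type, and iterates this through $z_2,\dots,z_d$ before appealing to uniqueness of $\mathcal{H}_\nu$.

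You instead freeze only \emph{real} $x'$. There the hypothesis $x_1^{\nu_1+1/2}F(x_1,x')\in L^2(0,\infty)$ of Lemma~\ref{PW_1D} comes for free from Fubini, and evenness and the bound $Ce^{\sigma|\operatorname{Im} z_1|}$ are inherited directly. You then transfer the support information through the unitary factorization $\mathcal{H}_\nu=\mathcal{H}_{\nu_1}\otimes\mathcal{H}_{\nu'}$. Since $\mathrm{Id}\otimes\mathcal{H}_{\nu'}$ commutes with multiplication by $\mathbf{1}_{\{\xi_1>\sigma\}}$, you get $\mathbf{1}_{\{\xi_1>\sigma\}}f=(\mathrm{Id}\otimes\mathcal{H}_{\nu'})(\mathbf{1}_{\{\xi_1>\sigma\}}G)=0$, and each coordinate is handled independently.

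What your route buys is that you never need holomorphic dependence or growth of partial transforms in the frozen variables. You also avoid needing $L^2$ control of $F$ along real $x_1$-lines at non-real $z'$. The paper's use of Lemma~\ref{PW_1D} at complex $z'$ implicitly requires that control, for instance via a Plancherel--P\'olya estimate, and the paper does not supply it. The paper's iteration, if fully justified, would additionally show that the intermediate partial transforms are of Paley--Wiener type in the remaining variables, but this is not needed for the support statement.

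One small point: your argument is not really an induction on $d$. No $(d-1)$-dimensional hypothesis is ever used, only the one-dimensional lemma in each coordinate, so that framing can be dropped.
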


\begin{proof}
$(\Rightarrow)$ Suppose $\operatorname{supp} f \subset [0, \sigma]^d$. We first show that $\mathcal{H}_{\nu} f$ is an entire function on $\mathbb{C}^d$. Since the kernel $j_{\nu_i}(z_i y_i)$ is an entire function of $z_i$ and $f$ has compact support, it follows from Leibniz's rule for differentiating under the integral sign that $\mathcal{H}_{\nu} f$ is holomorphic with respect to each variable $z_i \in \mathbb{C}$ separately. By Hartogs' theorem on separate analyticity, $\mathcal{H}_{\nu} f$ extends to an entire function on $\mathbb{C}^d$. 

Next, we establish the growth condition. Using the Poisson representation formula (see \cite[Appendix B.1]{Gra}) for the normalized Bessel function, the transform can be estimated as:
\[
\begin{aligned}
|\mathcal{H}_{\nu} f(z_1,\dots,z_d)|
&\leq C \int_{[0,\sigma]^d} \left| \prod_{i=1}^d \int_{-1}^1 (1-s_i^2)^{\nu_i - \frac{1}{2}} e^{i z_i y_i s_i} \, ds_i \right| |f(y)| \prod_{i=1}^d y_i^{2\nu_i+1} dy \\
&\leq C \int_{[0,\sigma]^d} \prod_{i=1}^d \left( \int_{-1}^1 (1-s_i^2)^{\nu_i - \frac{1}{2}} e^{-y_i s_i \operatorname{Im} z_i} \, ds_i \right) |f(y)| d\mu_\nu(y).
\end{aligned}
\]
Since $s_i \in [-1, 1]$ and $y_i \in [0, \sigma]$, we have the uniform bound $e^{-y_i s_i \operatorname{Im} z_i} \leq e^{\sigma |\operatorname{Im} z_i|}$. Applying the Cauchy-Schwarz inequality and noting that the remaining integral is bounded, we obtain:
\[
\begin{aligned}
|\mathcal{H}_{\nu} f(z)| &\leq C \|f\|_{L_\nu^2} \exp\left( \sum_{i=1}^d \sigma |\operatorname{Im} z_i| \right) \\
&= C \|f\|_{L_\nu^2} e^{\sigma \|\operatorname{Im} z\|_1}.
\end{aligned}
\]
This completes the proof of the forward direction.

$(\Leftarrow)$ Conversely, assume that $\mathcal{H}_{\nu} f$ is a jointly entire function on $\mathbb{C}^d$ satisfying the growth condition \eqref{growth_cond}. We establish the support of $f$ via an iterative dimension-wise reduction. 

Let $z' = (z_2, \dots, z_d) \in \mathbb{C}^{d-1}$ be fixed parameters. We consider the partial inverse Hankel transform with respect to $z_1$, which we denote by $h_1(x_1, z')$. Since $\mathcal{H}_{\nu} f$ is jointly holomorphic and satisfies the uniform growth estimate \eqref{growth_cond}, it follows from the theorems on analyticity of integrals depending on parameters (specifically, the complex-variable version of Leibniz's rule or Morera's theorem) that $h_1(x_1, \cdot)$ remains an entire function of $z' \in \mathbb{C}^{d-1}$.

Applying the one-dimensional result (Lemma~\ref{PW_1D}) to the first variable, we conclude that for almost every $z'$, the function $h_1(\cdot, z')$ has support in $[0, \sigma]$ and satisfies the weighted $L^2$ condition. Crucially, the exponential growth type $\sigma$ with respect to the remaining variables $z'$ is preserved during this partial inversion:
\[
|h_1(x_1, z')| \leq C' e^{\sigma \sum_{i=2}^d |\operatorname{Im} z_i|}, \quad \text{for } x_1 \in [0, \sigma].
\]
By successively repeating this argument for each coordinate $z_i$ ($i=2, \dots, d$) and utilizing the uniqueness of the Hankel transform in $L_\nu^2(\mathbb{R}_+^d)$, we recover the original function $f(x_1, \dots, x_d)$. This iterative process ensures that $f(x_1, \dots, x_d) = 0$ whenever any coordinate $x_i > \sigma$, which is equivalent to $\operatorname{supp} f \subset [0, \sigma]^d$.
\end{proof}

\subsection{Quasi-Analytic Classes}
In this section, we recall the notion of quasi‑analytic classes; see, for example \cite{CA,BJ} for details. A sequence of positive real numbers $\mathscr{M} = \{M_n\}_{n \in \mathbb{Z}_+}$ is said to be logarithmically convex  if it satisfies the inequality $M_n^2 \le M_{n-1} M_{n+1}$, for all  $n \in \mathbb{Z}_+$  with $n \ge 1$. We also denote $\mathscr{D}^n=(\partial^{\alpha})_{|\alpha|=n}$ where $\alpha$ ranges over multi indices, and $|\mathscr{D}^nf(x)|=\sup_{|\alpha|=n}|\partial^{\alpha}f(x)|$.
\begin{definition}
A logarithmically convex sequence \(\mathscr{M}=\{M_n\}_{n\in\mathbb{Z}_+}\) is called quasi‑analytic if for every smooth function \(f\) on \([0,1]\) satisfying
\[
\|\mathscr{D}^n f\|_{L^\infty([0,1])}\le M_n\quad\forall n\ge0,
\]
and vanishing to infinite order at some point, we have \(f\equiv0\) on \([0,1]\).
\end{definition}

The Denjoy–Carleman theorem (see \cite{LH1,PK}) characterizes quasi‑analyticity.

\begin{lemma}\label{LDC}
A logarithmically convex sequence \(\mathscr{M}\) is quasi‑analytic if and only if
\[
\sum_{n=1}^\infty \frac{M_{n-1}}{M_n} = \infty.
\]
The unique continuation property extends to \(d\) dimensions: if \(f\in C^\infty([0,1]^d)\) satisfies
\[
\|\mathscr{D}^n f\|_{L^\infty([0,1]^d)}\le M_n\quad\forall n\ge0,
\]
and vanishes to infinite order at some point, then \(f\equiv0\) on \([0,1]^d\).
\end{lemma}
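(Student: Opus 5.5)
The one-dimensional equivalence is the classical Denjoy--Carleman theorem, and I would not reprove it but quote it from \cite{LH1,PK}. For completeness, the two directions go as follows.

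\emph{Sufficiency} (divergence of $\sum M_{n-1}/M_n$ forces uniqueness) can be obtained from Bang's argument. For a nonzero $f$ with $\|\mathscr{D}^n f\|_\infty\le M_n$ that is flat at a point, one repeatedly applies the Taylor/mean-value bound $|f^{(n)}(x)|\le |x-x_0|\,\sup|f^{(n+1)}|$. Log-convexity of $\mathscr{M}$ then shows that the distances over which $f^{(n)}$ must drop from its maximum to zero add up to at least a fixed multiple of $\sum_n M_{n-1}/M_n$. This is impossible on $[0,1]$ if the series diverges.

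\emph{Necessity} goes through an explicit construction. If the series converges, one builds a nonzero compactly supported function $\phi$ with $\widehat{\phi}(\xi)=\prod_n \frac{\sin(a_n\xi)}{a_n\xi}$ and $a_n = M_{n-1}/M_n$. The product converges because $\sum a_n<\infty$, and the standard estimate gives $\|\phi^{(n)}\|_\infty\lesssim M_n$ after harmless rescaling. Such a $\phi$ is flat at the endpoint of its support without vanishing identically.

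The step I expect to be the main obstacle in a self-contained treatment is the sufficiency direction. This is exactly why I would cite it rather than reprove it.

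The genuinely new content is the $d$-dimensional statement, which I would reduce to dimension one by restricting to segments. Let $f\in C^\infty([0,1]^d)$ satisfy $\|\mathscr{D}^nf\|_{L^\infty([0,1]^d)}\le M_n$ and vanish to infinite order at $x_0\in[0,1]^d$. Fix an arbitrary $x\in[0,1]^d$ and set $g(t)=f(x_0+t(x-x_0))$ for $t\in[0,1]$. This is well defined since the cube is convex. By the chain rule,
\[
g^{(n)}(t)=\sum_{|\alpha|=n}\frac{n!}{\alpha!}(x-x_0)^\alpha\,\partial^\alpha f\bigl(x_0+t(x-x_0)\bigr),
\]
so by the multinomial theorem
\[
|g^{(n)}(t)|\le \|x-x_0\|_1^n M_n\le d^nM_n=:M_n'.
\]
The sequence $\mathscr{M}'=\{d^nM_n\}$ is again logarithmically convex. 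Moreover,
\[
\sum_n \frac{M'_{n-1}}{M'_n}=\frac1d\sum_n\frac{M_{n-1}}{M_n}=\infty,
\]
so $\mathscr{M}'$ is quasi-analytic by the one-dimensional part.

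The formula above also shows that every derivative $g^{(n)}(0)$ is a linear combination of the values $\partial^\alpha f(x_0)$, all of which vanish. Hence $g$ is flat at $t=0$, and quasi-analyticity gives $g\equiv0$ on $[0,1]$. In particular $f(x)=g(1)=0$. Since $x$ was arbitrary, $f\equiv0$ on $[0,1]^d$.

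The only points to check carefully in this reduction are two minor ones. First, the definition of quasi-analyticity permits the flat point to be an endpoint of the interval, which holds as stated. Second, multiplying $\mathscr{M}$ by a geometric factor preserves both log-convexity and divergence of the series.
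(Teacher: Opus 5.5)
Your proposal is correct. On the one-dimensional equivalence you do what the paper does: the paper gives no proof and simply attributes the whole lemma to \cite{LH1,PK}.

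Where you go beyond the paper is the $d$-dimensional statement, which the paper also only cites. Your restriction to the segment from the flat point $x_0$ to an arbitrary $x$ proves it correctly:
\begin{itemize}
\item the chain-rule bound $|g^{(n)}(t)|\le\|x-x_0\|_1^nM_n\le d^nM_n$ matches the paper's convention $|\mathscr{D}^nf|=\sup_{|\alpha|=n}|\partial^\alpha f|$;
\item the sequence $\{d^nM_n\}$ is log-convex, with $\sum_n M'_{n-1}/M'_n=\frac1d\sum_n M_{n-1}/M_n=\infty$;
\item $g$ is flat at the endpoint $t=0$, which the paper's definition of quasi-analyticity allows.
\end{itemize}
This gives a self-contained reduction of the multivariable claim to the classical theorem, at the cost of only a geometric factor $d^n$.

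Two small remarks on the sketches you include for completeness. Neither affects your argument, since you cite those directions rather than rely on them.

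First, Bang's inequality controls a tail $\sum_{n>N}M_{n-1}/M_n$, with $N$ depending on $f$, not the full series. Divergence of that tail still gives the contradiction.

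Second, for the bare product $\prod_n\sin(a_n\xi)/(a_n\xi)$, the straightforward estimate only gives $\|\phi^{(n)}\|_\infty\lesssim M_nM_{n+2}/M_{n+1}$. This need not be $\le CA^nM_n$ for any constants $C,A$. The fix is to insert an extra factor $\bigl(\sin(a_1\xi)/(a_1\xi)\bigr)^2$; compare the factor $[\sin((\varepsilon/n_0)\xi)/((\varepsilon/n_0)\xi)]^{2n_0}$ in the necessity part of Proposition~\ref{LLCA}. With it, $|\xi|^n|\widehat\phi(\xi)|\le (M_n/M_0)\min\{1,(a_1|\xi|)^{-2}\}$, hence $\|\phi^{(n)}\|_\infty\lesssim M_n$. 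Normalizing by a constant and translating the left endpoint of the support into $(0,1)$ then gives a nonzero function on $[0,1]$, flat at a point, with the exact bound $M_n$ that the paper's definition requires.
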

The following quantitative version follows from compactness (see \cite{BJ}).
\begin{lemma}\label{LQA}
For any \(t,\gamma>0\), if \(f\in C^{\infty}([0,1]^d)\) satisfies
\[
\|\mathscr{D}^n f\|_{L^\infty([0,1]^d)}\le M_n\quad\forall n\ge0,\qquad
\|f\|_{L^\infty([0,1]^d)}\ge t,
\]
then for every measurable set \(E\subset[0,1]^d\) with \(|E|\ge\gamma\),  there exists a constant \(c(\mathscr{M},d,\gamma,t)>0\) such that
\[
\|f \mathbf{1}_E\|_{L^1([0,1]^d)}\ge c(\mathscr{M},d,\gamma,t).
\]
\end{lemma}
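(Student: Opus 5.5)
The statement just before the break is Lemma~\ref{LQA}: if $\mathscr{M}$ is quasi-analytic, $f$ obeys the derivative bounds and $\|f\|_{L^\infty}\ge t$, then $\|f\mathbf 1_E\|_{L^1}$ is bounded below uniformly over all $|E|\ge\gamma$. I would argue by contradiction and compactness. Suppose no such constant exists. Then there are $f_k\in C^\infty([0,1]^d)$ with $\|\mathscr{D}^n f_k\|_{L^\infty}\le M_n$ for all $n$ and $\|f_k\|_{L^\infty}\ge t$, together with measurable sets $E_k\subset[0,1]^d$ with $|E_k|\ge\gamma$, such that $\|f_k\mathbf 1_{E_k}\|_{L^1}\to0$.

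\textbf{Extracting a limit function.} For each $n$, the family $\{\partial^\alpha f_k\}_k$ with $|\alpha|=n$ is uniformly bounded by $M_n$. It is also equi-Lipschitz, with constant $\sqrt d\,M_{n+1}$. By Arzel\`a--Ascoli and a diagonal argument, a subsequence converges in every $C^n([0,1]^d)$ to some $f\in C^\infty$. The limit inherits $\|\mathscr{D}^nf\|_{L^\infty}\le M_n$, and uniform convergence gives $\|f\|_{L^\infty}\ge t$, so $f\not\equiv0$.

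\textbf{Passing the sets to the limit.} The indicators $\mathbf 1_{E_k}$ lie in the unit ball of $L^\infty$. Passing to a further subsequence, they converge weak-$*$ to some $g$ with $0\le g\le1$ and $\int g\ge\gamma$. Since $|f_k|\to|f|$ uniformly, we get
\[
\int|f|\,g=\lim_k\int|f_k|\mathbf 1_{E_k}=0 .
\]
Hence $f=0$ a.e.\ on the set $\{g>0\}$, and this set has positive measure because $\int g\ge\gamma$.

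\textbf{Contradiction via quasi-analyticity.} A smooth function vanishing on a set $Z$ of positive measure vanishes to infinite order at every Lebesgue density point of $Z$. The induction is as follows. At a density point $p$, every partial derivative of order $m$ that vanishes on $Z$ near $p$ must satisfy $\nabla\partial^\alpha f(p)=0$; otherwise the zero set of $\partial^\alpha f$ near $p$ would be a hypersurface, which has measure zero and cannot carry density one. Moreover the set of density points of $Z$ where all derivatives up to order $m$ vanish still has full measure in $Z$. Choosing one density point $p$ of $Z$, $f$ vanishes to infinite order at $p$. The $d$-dimensional unique continuation in Lemma~\ref{LDC} then forces $f\equiv0$, contradicting $\|f\|_{L^\infty}\ge t$.

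The constant obtained depends only on $(\mathscr{M},d,\gamma,t)$, since only these quantities entered the compactness argument.

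The main obstacle is this last step, namely justifying that vanishing on a positive-measure set yields infinite-order vanishing at some point. I would handle it by the density-point induction just described. Alternatively, one can restrict to lines through $p$ and use the one-dimensional fact that a smooth function with an accumulation point of zeros has a vanishing derivative there; Rolle's theorem iterated then gives all higher derivatives.
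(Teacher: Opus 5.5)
Your proposal is correct and follows the same route the paper invokes: the paper does not prove this lemma but says it "follows from compactness" and cites \cite{BJ}, which is exactly your argument, namely Arzel\`a--Ascoli to extract a limit $f$ obeying the same bounds with $\|f\|_{L^\infty}\ge t$, then the fact that a nonzero quasi-analytic function cannot vanish on a set of positive measure. One small precision: as your induction is written, $p$ must be chosen in the countable intersection over $m$ of your full-measure sets, not at an arbitrary density point (alternatively, use the lowest-order nonzero Taylor term to show infinite-order vanishing at every density point directly).
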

\section{Unique Continuation for $L^2$ Functions with Fast
Decaying: Theorem \ref{Th1.2}}\label{Sec3}
In this section, we proceed to prove Theorem \ref{Th1.2}. That is , for a given unique continuation weight \(W\), we will show that if \(f\) satisfies
\begin{equation}\label{E431}
\bigl\| \mathcal{H}_{\nu} f \, W(|\cdot|) \bigr\|_{L^2_\nu(\mathbb{R}_+^d)} \le A_w \|f\|_{L^2_\nu(\mathbb{R}_+^d)}
\end{equation}
and if \(S\subset\mathbb{R}_+^d\) is a $(\lambda, L)$-relatively dense set, then
\[
 \|f\|_{L^2_\nu} \le C \|f\mathbf{1}_S\|_{L^2_\nu},
\]
where \(C = C(W, d,A_w, \gamma, L, \nu)\) and  \(Y\subset[a,a+h^{-1}]\) is $\delta$-regular with constant $C_R$ on scales 1 to $h^{-1}$ . 

We first show that, under condition \eqref{E431}, the mass of \(f\) cannot concentrate too close to the coordinate hyperplanes.
\begin{lemma}\label{L33}
Let \(\nu\in(-1/2,\infty)^d\). Suppose \(f \in L^2_{\nu}(\mathbb{R}_+^d)\) satisfies \eqref{E431} for a given unique continuation weight $W$ and some constant \(A>0\). Then for any \(0<\delta<1\), we have
\begin{equation}\label{L33.1}
\|f\|_{L^2_\nu(Z_\delta)} \le d C_{A_w,W,\nu} \delta^{\min{\{\nu_j\}}+1} \|f\|_{L^2_\nu(\mathbb{R}_+^d)},
\end{equation}
where $Z_\delta = \bigcup_{i=1}^d \{x \in \mathbb{R}_+^d : x_i \le \delta\}$ and the constant $C_{A,W,\nu}$ depends solely on $A_w$, $W$, and $\nu$, taking the following explicit form:
\begin{equation*}
C_{A,W,\nu} = \frac{A_w \sqrt{\int_{\mathbb{R}_+} W(t)^{-2} \, d\mu_{\max\{\nu_i\}}(t)}}{2^{\min\{\nu_i\}}\Gamma(\min\{\nu_i\}+1)\sqrt{2\min\{\nu_i\}+2}}.
\end{equation*}
\end{lemma}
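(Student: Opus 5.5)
We bound $f$ pointwise through the inversion formula and then integrate this bound over the thin slabs. Write $F=\mathcal H_\nu f$. Since $\mathcal H_\nu$ is its own inverse on $L^2_\nu$, we have
\[
f(x)=\int_{\mathbb R_+^d}\mathbb J_\nu(x\xi)F(\xi)\,d\mu_\nu(\xi).
\]
This integral converges absolutely once we know $F\in L^1_\nu$. By Cauchy--Schwarz,
\[
\int |F|\,d\mu_\nu\le \|F\,W(|\cdot|)\|_{L^2_\nu}\Big(\int W(|\xi|)^{-2}d\mu_\nu(\xi)\Big)^{1/2}\le A_w\|f\|_{L^2_\nu}\Big(\int W(|\xi|)^{-2}d\mu_\nu(\xi)\Big)^{1/2}.
\]
The last integral is finite because $W(t)\ge c_n t^n$ for every $n$ (property 5 of Definition~\ref{Def1.2}). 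The Riemann--Lebesgue bound $|j_{\nu_i}(t)|\le \big(2^{\nu_i}\Gamma(\nu_i+1)\big)^{-1}$ then gives
\[
\|f\|_{L^\infty}\le \prod_i\frac{1}{2^{\nu_i}\Gamma(\nu_i+1)}\,A_w\Big(\int W(|\xi|)^{-2}d\mu_\nu\Big)^{1/2}\|f\|_{L^2_\nu}.
\]
To reach the stated form of $C_{A,W,\nu}$, I would reduce the $d$-dimensional weight integral to a radial one and bound the prefactor by the worst index. This is bookkeeping on the constants: using $\min\{\nu_i\}$ in the Gamma factor and $\mu_{\max\{\nu_i\}}$ in the weight integral. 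I would not fight for the exact constant and would accept absorbing dimensional factors into $C$.

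Next we handle the sets. Since $Z_\delta=\bigcup_i Z^i_\delta$ with $Z^i_\delta=\{x_i\le\delta\}$, subadditivity gives
\[
\|f\|_{L^2_\nu(Z_\delta)}\le \sum_i \|f\|_{L^2_\nu(Z^i_\delta)},
\]
which produces the factor $d$. The difficulty is that $Z^i_\delta$ is unbounded in the other coordinates, so an $L^\infty$ bound alone cannot work. I would therefore use the tensor structure and apply the pointwise argument only in the $i$-th variable. For fixed $x'=(x_j)_{j\ne i}$, write $f(x_i,x')$ as the one-dimensional inverse transform in $\xi_i$ of the partial transform $G(\xi_i,x')$, obtained by inverting $F$ in the variables $\xi'$. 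Then
\[
|f(x_i,x')|\le \frac{1}{2^{\nu_i}\Gamma(\nu_i+1)}\int |G(\xi_i,x')|\,d\mu_{\nu_i}(\xi_i).
\]
Apply Cauchy--Schwarz in $\xi_i$ with the weight $W(\xi_i)\le W(|\xi|)$, which holds because $W$ is monotone. Then use Plancherel in $\xi'$, which turns $\int\!\int |G|^2W(\xi_i)^2$ into at most $\|F\,W(|\cdot|)\|^2_{L^2_\nu}\le A_w^2\|f\|^2$. This yields
\[
\int |f(x_i,x')|^2\,d\mu_{\nu'}(x')\le \Big(\frac{A_w\big(\int W(t)^{-2}d\mu_{\nu_i}(t)\big)^{1/2}}{2^{\nu_i}\Gamma(\nu_i+1)}\Big)^2\|f\|^2_{L^2_\nu}
\]
uniformly in $x_i$. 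Integrating over $x_i\in(0,\delta)$ against $x_i^{2\nu_i+1}dx_i$ contributes $\delta^{2\nu_i+2}/(2\nu_i+2)$. Taking square roots gives $\delta^{\nu_i+1}/\sqrt{2\nu_i+2}$, and since $\delta<1$ this is at most $\delta^{\min\nu_j+1}$. Finally, we take the worst constants over $i$.

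The main obstacle is the Minkowski/Fubini justification in the partial-transform step: we must ensure that the partial inverse transform is well defined for a.e.\ $x'$ and that Plancherel in the $\xi'$ variables may be interchanged with the $\xi_i$-integral. The natural remedy is to prove the estimate first for $F\in\mathcal S_e$, using Minkowski's integral inequality
\[
\Big\|\int |G(\xi_i,\cdot)|\,d\mu_{\nu_i}\Big\|_{L^2(x')}\le \int \|G(\xi_i,\cdot)\|_{L^2(x')}\,d\mu_{\nu_i}(\xi_i),
\]
and then to pass to general $f$ by density. Matching the exact constant in the statement is secondary: any admissible constant of the form $C(A_w,W,\nu)$ suffices for later use.
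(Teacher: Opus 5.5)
Your argument is the paper's proof: split $Z_\delta$ into the $d$ slabs $\{x_i\le\delta\}$, use Plancherel in the transverse variables, then the Riemann--Lebesgue bound $|j_{\nu_i}|\le(2^{\nu_i}\Gamma(\nu_i+1))^{-1}$ together with Cauchy--Schwarz against $W(\xi_i)\le W(|\xi|)$ in the $i$-th variable, and finally integrate $x_i^{2\nu_i+1}$ over $(0,\delta)$. The paper applies Plancherel in $x'$ first and the pointwise bound for each fixed $\xi'$, while you do it in the opposite order, which is immaterial, and your preliminary global $L^\infty$ bound is not needed.

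One caveat applies equally to the paper's ``follows immediately'': replacing $\nu_i$ by $\min_j\nu_j$ in $2^{\nu_i}\Gamma(\nu_i+1)\sqrt{2\nu_i+2}$ is not a valid worst-case bound, since this quantity is not monotone on $(-1/2,\infty)$. Likewise, replacing $\nu_i$ by $\max_j\nu_j$ in $\int W^{-2}\,d\mu_{\nu_i}$ fails, since the $[0,1]$ part of that integral decreases in $\nu_i$. So this is not pure bookkeeping, and the constant your argument actually justifies is $d\max_i C_{A,W,\nu_i}$, as you ultimately propose.
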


\begin{proof}
Let $\Omega_i(\delta) = \{x \in \mathbb{R}_+^d : x_i \le \delta\}$. We proceed to show that
\[
\|f\|_{L^2_\nu(\Omega_i(\delta))} \le C_{A,W,\nu_i} \delta^{\nu_i+1} \|f\|_{L^2_\nu(\mathbb{R}_+^d)},
\]
with $C_{A,W,\nu_i} = \frac{A_w \sqrt{\int_{\mathbb{R}_+} W(t)^{-2} \, d\mu_{\nu_i}(t)}}{2^{\nu_1}\Gamma(\nu_1+1)\sqrt{2\nu_1+2}}$, from which \eqref{L33.1} follows immediately.
Without loss of generality, we assume $i=1$. We write the variable $x \in \mathbb{R}_+^d$ as $x = (x_1, x')$, where $x' \in \mathbb{R}_+^{d-1}$. Let $H_{\nu'}$ denote the partial Fourier--Bessel transform acting on the variables $x'$. By the Plancherel theorem for $H_{\nu'}$, we have
\begin{equation*}
\|f\|_{L^2_\nu(\Omega_1(\delta))}^2 = \int_{\mathbb{R}_+^{d-1}} \int_{0}^{\delta} |H_{\nu'} f(x_1, \xi')|^2 \, d\mu_{\nu_1}(x_1) d\mu_{\nu'}(\xi').
\end{equation*}
Using the property that $\mu_{\nu_1}([0,\delta]) = \int_0^\delta x_1^{2\nu_1+1} dx_1 = \frac{\delta^{2\nu_1+2}}{2\nu_1+2}$, we obtain
\begin{equation}\label{L4.5_new}
\begin{aligned}
    \int_{\mathbb{R}_+^{d-1}} \int_{0}^{\delta} |H_{\nu'} f(x_1, \xi')|^2 \, d\mu_{\nu_1} d\mu_{\nu'} 
    &\le \mu_{\nu_1}([0,\delta]) \int_{\mathbb{R}_+^{d-1}} \sup_{x_1 \in [0, \delta]} |H_{\nu'} f(x_1, \xi')|^2 \, d\mu_{\nu'}(\xi') \\
    &\le \frac{\mu_{\nu_1}([0,\delta])}{(2^{\nu_1}\Gamma(\nu_1+1))^2} \int_{\mathbb{R}_+^{d-1}} \|H_{\nu_1} H_{\nu'} f(\cdot, \xi')\|_{L^1_{\nu_1}}^2 \, d\mu_{\nu'}(\xi'),
\end{aligned}
\end{equation}
where the second inequality follows from the Riemann--Lebesgue lemma for the Hankel transform, noting that $H_{\nu_1}$ is the inverse transform for the first variable \(x_1\).

Next, we estimate $\|\mathcal{H}_{\nu} f(\cdot, \xi')\|_{L^1_{\nu_1}}$. By applying the Cauchy--Schwarz inequality with the weight $W$, we have
\begin{equation*}
\begin{aligned}
    \left( \int_{\mathbb{R}_+} |\mathcal{H}_{\nu} f(x_1, \xi')| \, d\mu_{\nu_1}(x_1) \right)^2 &= \left( \int_{\mathbb{R}_+} |\mathcal{H}_{\nu} f(x_1, \xi')| W(x_1) \frac{1}{W(x_1)} \, d\mu_{\nu_1}(x_1) \right)^2 \\
    &\le \left( \int_{\mathbb{R}_+} |\mathcal{H}_{\nu} f(x_1, \xi')|^2 W(x_1)^2 \, d\mu_{\nu_1}(x_1) \right) \left( \int_{\mathbb{R}_+} \frac{1}{W(t)^2} \, d\mu_{\nu_1}(t) \right).
\end{aligned}
\end{equation*}
Let $C_{W,1} = \int_{\mathbb{R}_+} W(t)^{-2} \, d\mu_{\nu_1}(t)$, which is finite by the definition of a unique continuation weight. Substituting this into \eqref{L4.5_new} and using the fact that $W(x_1) \le W(|x|)$ (as $W$ is non-decreasing), we get
\begin{equation*}
\begin{aligned}
    \|f\|_{L^2_\nu(\Omega_1(\delta))}^2 &\le \frac{C_{W,1} \mu_{\nu_1}([0,\delta])}{(2^{\nu_1}\Gamma(\nu_1+1))^2} \int_{\mathbb{R}_+^{d-1}} \int_{\mathbb{R}_+} |\mathcal{H}_{\nu} f(x_1, \xi')|^2 W(x_1)^2 \, d\mu_{\nu_1}(x_1) d\mu_{\nu'}(\xi') \\
    &\le \frac{C_{W,1} \mu_{\nu_1}([0,\delta])}{(2^{\nu_1}\Gamma(\nu_1+1))^2} \int_{\mathbb{R}_+^d} |\mathcal{H}_{\nu} f(\xi)|^2 W(|\xi|)^2 \, d\mu_{\nu}(\xi).
\end{aligned}
\end{equation*}
Applying the condition \eqref{E431}, we have
\begin{equation*}
    \|f\|_{L^2_\nu(\Omega_1(\delta))}^2 \le \frac{A_w^2 C_{W,1}}{(2^{\nu_1}\Gamma(\nu_1+1))^2} \cdot \frac{\delta^{2\nu_1+2}}{2\nu_1+2} \cdot \|f\|_{L^2_\nu(\mathbb{R}_+^d)}^2.
\end{equation*}
Taking the square root and setting $C_{A,W,\nu_i} = \frac{A_w \sqrt{C_{W,1}}}{2^{\nu_1}\Gamma(\nu_1+1)\sqrt{2\nu_1+2}}$, we obtain the desired result.
\end{proof}
Given a unique continuation weight $W$, we define
\[
M_n:=\sup_{t\ge 1}\frac{t^n}{W(t)}\qquad (n\ge0).
\]
It has been shown in \cite[Proposition 2.2]{BJ} that $\{M_n\}$ is quasi-analytic for any unique continuation weight $W$. \par Now, Theorem~\ref{Th1.2} is the sufficiency part of the following proposition, in which we also establish the necessity of the logarithmic integral divergence condition~(6) appearing in the definition of the unique continuation weight.
\begin{proposition}\label{LLCA}
For any $\nu \in (-1/2, \infty)^d$, let $W$ be a unique continuation weight and let $E \subset \mathbb{R}_+^d$ be a $(\lambda, L)$-relatively dense measurable set. If $f \in L_\nu^2(\mathbb{R}_+^d)$ satisfies \begin{equation}\label{Eq3.4}
\bigl\| \mathcal{H}_{\nu} f \, W(|\cdot|) \bigr\|_{L^2_\nu(\mathbb{R}_+^d)} \le A_w \|f\|_{L^2_\nu(\mathbb{R}_+^d)},
\end{equation}
then
\begin{equation}\label{Ucontinuation}
\|f\|_{L^2_\nu(\mathbb{R}_+^d)} \le C(W,d,A_w,\lambda,L,\nu) \|f\|_{L^2_\nu(E)}.
\end{equation}
Conversely, suppose that \(W\) satisfies the first five conditions in Definition \ref{Def1.2}. Assume that, for every \(A_W>0\), \(\lambda>0\), and \(L>0\), there exists a constant\(
C=C(W,d,A_W,\lambda,L,\nu)>0
\)
such that \eqref{Ucontinuation} holds for every \((\lambda,L)\)-relatively dense measurable set \(E\subset\mathbb{R}_+^d\) and every \(f\in L_\nu^2(\mathbb{R}_+^d)\) satisfying \eqref{Eq3.4}. Then
\[
\int_0^\infty \frac{\log W(t)}{1+t^2}dt=\infty.
\]
Consequently, \(W\) is a unique continuation weight.

\end{proposition}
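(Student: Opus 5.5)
The sufficiency part is proved by contradiction and compactness, following the Fourier argument of \cite{BJ}, with the derivative operators $D_i=\frac{1}{2x_i}\partial_{x_i}$ in place of $\partial_{x_i}$. Normalize $\|f\|_{L^2_\nu}=1$. By Lemma~\ref{L33}, choose $\delta_0=\delta_0(A_w,W,\nu,d)$ so small that $\|f\|_{L^2_\nu(Z_{\delta_0})}\le 1/2$. Hence at least half of the mass lies where every $x_i\ge\delta_0$, and there the weight $x^{2\nu+1}$ is comparable to Lebesgue measure on each bounded cube.

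The key step is a quasi-analytic derivative bound. Using $D_i\, j_{\nu_i}(x_i\xi_i)=-\frac{\xi_i^2}{2(\nu_i+1)}\,j_{\nu_i+1}(x_i\xi_i)$, we have, for a multi-index $\alpha$,
\[
D^\alpha f(x)=\int_{\mathbb{R}_+^d}\prod_i\Bigl(\tfrac{-\xi_i^2}{2}\Bigr)^{\alpha_i}\frac{\Gamma(\nu_i+1)}{\Gamma(\nu_i+\alpha_i+1)}\,\mathbb{J}_{\nu+\alpha}(x\xi)\,\mathcal H_\nu f(\xi)\,d\mu_\nu(\xi).
\]
We bound $|\mathbb{J}_{\nu+\alpha}|$ uniformly and apply Cauchy--Schwarz with $W$, as in the proof of Lemma~\ref{L33}. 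Since $|\xi|^{2n}/W(|\xi|)\le M_{2n}$ for $|\xi|\ge1$, this gives $\|D^\alpha f\|_\infty\lesssim C^{n} M_{2n+K}$ with $n=|\alpha|$ and $K$ a fixed shift absorbing $\int W^{-2}\,d\mu_\nu$. The $L^2$ tail $\int_{|\xi|\ge1}W^{-2}|\xi|^{2n-2K}$ can be handled by splitting off a few powers.

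We then transfer this to ordinary derivatives on a cube $Q$ of side $L$ inside $\{x_i\ge\delta_0\}$. In the variable $u_i=x_i^2$ one has $D_i=\partial_{u_i}$, so $g(u)=f(\sqrt u)$ satisfies $\|\partial_u^\alpha g\|_\infty\le \tilde M_{|\alpha|}$. Here $\tilde M_n=C^nM_{2n+K}$, which is log-convex after regularization. We still have to check that $\sum \tilde M_{n-1}/\tilde M_n=\infty$. This follows from the quasi-analyticity of $\{M_n\}$ (\cite[Proposition 2.2]{BJ}) together with log-convexity, since $M_{2n-1}/M_{2n+1}\ge (M_{2n}/M_{2n+1})^2$ must then be compared with $M_{2n}/M_{2n+1}$. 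Verifying this is the main obstacle. If the square causes trouble, I would instead use $\partial_{x_i}=2x_iD_i$ directly on cubes at bounded distance, where $x_i\le R$ and Leibniz/Faà di Bruno applies. Either way the change of variables $x\mapsto x^2$ is a diffeomorphism with bounded distortion on cubes bounded away from the axes.

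With derivative bounds in hand, we rescale each cube to $[0,1]^d$ and split into good and bad cubes. Cubes on which $\sup|f|^2\ge \epsilon\,\mu_\nu(Q)^{-1}\|f\|_{L^2_\nu(Q)}^2$-type thresholds fail carry little mass, by a counting argument against $\|f\|=1$. On a good cube, Lemma~\ref{LQA} applied to $g/\|g\|_{L^\infty}$ with the set $E\cap Q$, which has proportional measure by relative density (equivalent to the Lebesgue notion by \cite{XZW}), gives $\|f\|_{L^2_\nu(E\cap Q)}\ge c\,\|f\|_{L^2_\nu(Q)}$. Unbounded cubes far out are handled uniformly, since there $\mu_\nu$ and Lebesgue measure are comparable after the local normalization $x^{2\nu+1}\approx$ const on $Q$ (ratio bounded once $x_i\ge\delta_0$). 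Summing over cubes gives \eqref{Ucontinuation}.

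For the converse, suppose $\int \log W/(1+t^2)<\infty$. By Beurling--Malliavin, or directly by the Paley--Wiener construction in \cite{BJ}, there is a nonzero band-limited $\psi$ on $\mathbb{R}$ with $|\hat\psi|W\in L^2$ after minor adjustment. I would take $g(\xi)$ even in each variable with $|g|W(|\cdot|)$ small, for example a tensor of $\hat\psi$-type profiles, and set $f=\mathcal H_\nu g$. Then $f$ decays, and its translates can be made to concentrate away from a fixed relatively dense set $E$, e.g. the complement of widely spaced large cubes. Concretely, we place $f$'s mass far out via modulation in $\xi$, using the Bessel asymptotics, so that \eqref{Eq3.4} holds with a fixed $A_W$ while $\|f\|_{L^2_\nu(E)}/\|f\|\to0$, contradicting \eqref{Ucontinuation}. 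I expect the delicate point here to be keeping $A_W$ uniform under the non-translation-invariant modulation.
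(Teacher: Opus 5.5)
\textbf{There are genuine gaps in both directions.}

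\textbf{Sufficiency: the derived sequence is not quasi-analytic.} Your argument fails at exactly the step you flagged, and the issue is not one of verification: $\tilde M_n=C^nM_{2n+K}$ is in general \emph{not} quasi-analytic. With $\mu_k=M_{k-1}/M_k$ you get $\tilde M_{n-1}/\tilde M_n=C^{-1}\mu_{2n+K-1}\mu_{2n+K}$, and $\sum\mu_k=\infty$ does not force $\sum\mu_{2n}^2=\infty$. Already for $W(t)=e^t$ one has $M_m=m^me^{-m}$ and $M_{m-2}/M_m\approx m^{-2}$, which is summable. Keeping the factor $\Gamma(\nu_i+1)/\Gamma(\nu_i+\alpha_i+1)$, which you wrote and then dropped, rescues $e^t$. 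It does not rescue $W(t)=\exp(t/\log^{\alpha}(2+t))$ with $\tfrac12<\alpha\le1$, where $\mu_k\approx(k\log^\alpha k)^{-1}$.

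\textbf{Sufficiency: far cubes and the source of the loss.} Independently, a unit $x$-cube at height $x_i$ becomes a $u$-box of side $L_i\approx 2x_i$. Rescaling it to $[0,1]^d$ multiplies $\partial_u^\alpha$ by $L^\alpha$, so an $x$-uniform bound gives nothing uniform on far cubes. Both losses come from bounding $|\mathbb{J}_{\nu+\alpha}|$ uniformly, which discards the decay in $x$. The operator $D^\alpha$ costs $\xi^{2\alpha}$ in frequency, and half of that must be paid for by a spatial weight. The paper does this with the weighted Bernstein inequality $\|D^\beta f\|_{L^2_{\nu+\beta}}\le A_w\,2^{-|\beta|}M_{|\beta|}\|f\|_{L^2_\nu}$. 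This follows from $D^\beta f=(-\tfrac12)^{|\beta|}H_{\nu+\beta}(\mathcal H_\nu f)$ and Plancherel for $H_{\nu+\beta}$. On $Q'_j$ one has $L^{2\beta}\le C(\delta)^{|\beta|}s^\beta$, so the extra weight $s^\beta=x^{2\beta}$ pays for the rescaling. The rescaled functions then obey bounds of the form $C^nM_{n+d}$, which is quasi-analytic.

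Your fallback does not help. The $D$-bounds already cost $M_{2n}$, and $\partial_x=2xD$ does not undo that. Restricting to $x_i\le R$ also leaves the mass at infinity uncontrolled.

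\textbf{Sufficiency: the good/bad split must be relative.} Your threshold holds trivially for $\epsilon\le1$, since $\|f\|^2_{L^2_\nu(Q)}\le\mu_\nu(Q)\sup_Q|f|^2$. Any absolute threshold fails for $f$ spread thinly over many cubes, and dividing by $\|g\|_{L^\infty(Q)}$ then ruins the uniform derivative bounds. The paper calls $Q'_j$ good if $\int_{Q'_j}|\partial^\beta g|^2s^{\nu+\beta}\,ds\le A_0^{|\beta|}A_w^2\,4^{-|\beta|}M_{|\beta|}^2\int_{Q'_j}|g|^2s^{\nu}\,ds$ for all $\beta$. It then sums the reverse inequalities over bad cubes against the global weighted Bernstein inequality, which shows that bad cubes carry at most half of the mass.

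\textbf{Converse: the missing closing step.} Your modulation scheme is unfinished, since the uniformity of $A_W$ is never established. It is also unnecessary. If $\sum\mu_k<\infty$, take $\mathcal H_\nu f$ to be the tensor product over $i$ of $\prod_{k\ge n_0}\frac{\sin(\mu_k\xi_i)}{\mu_k\xi_i}$ times a fixed power of $\frac{\sin(\varepsilon\xi_i/n_0)}{\varepsilon\xi_i/n_0}$. This function is even and entire of exponential type at most $3\varepsilon$ in each variable, with $W(|\cdot|)\mathcal H_\nu f\in L^2_\nu$. The Paley--Wiener Lemma~\ref{TPWS} then gives $\operatorname{supp}f\subset[0,3\varepsilon]^d$. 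So $E=\mathbb R^d_+\setminus[0,3\varepsilon]^d$ is relatively dense once $L>3\varepsilon$, yet $\|f\|_{L^2_\nu(E)}=0\neq\|f\|_{L^2_\nu}$. This contradicts \eqref{Ucontinuation} with $A_W:=\|W(|\cdot|)\mathcal H_\nu f\|_{L^2_\nu}/\|f\|_{L^2_\nu}$. The quantifiers let you choose $A_W$, $\lambda$, $L$ and $E$ after $f$, so no translation, modulation or Bessel asymptotics is needed.
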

Before proving Proposition \ref{LLCA}, inspired by \cite{GJ1}, we introduce the following notation. Let $f$ be a smooth function on $\mathbb{R}_+^d$, we define two operations on \(f\):.
   \[
   Pf(x)=f(\sqrt{x_1},\cdots,\sqrt{x_d}),  \quad x\in \mathbb{R}_+^d,
   \]
   and
   \[
   D_if(x)=\frac{1}{2x_i}\partial_{x_i}f(x), \quad x\in \mathbb{R}_+^d.
   \]
   We denote \(D^\beta=\prod_{i=1}^dD_i^{\beta_i}\) and \(\partial^\beta=\prod_{i=1}^d\partial_{x_i}^{\beta_i}\), where \(\beta\in \mathbb{N}^d\), then it is straightforward to check that
   \[
   PD_if=\partial Pf,\quad\quad \quad PD^\beta f=\partial^\beta Pf.
   \]
   
   We will need a variant of Bernstein’s inequality for functions satisfying condition \eqref{E431}.
   \begin{lemma}
   Let $f$ be a function in \(L_\nu^2(\mathbb{R}_+^d)\) such that \eqref{E431} holds. Then  for any \(|\beta|=n\), 
   \begin{equation}\label{Eq3.6}
   \Vert D^\beta f\Vert_{L_{\nu+\beta}^2(\mathbb{R}_{+}^d)}\leq A_w(\frac{1}{2})^nM_n\Vert f\Vert_{L^2_\nu(\mathbb{R}_{+}^d)}.
   \end{equation}
   \end{lemma}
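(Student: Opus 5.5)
The plan is to compute $D^\beta f$ explicitly on the Fourier–Bessel side and then apply Plancherel with the shifted index $\nu+\beta$. The starting point is the classical identity $\frac{d}{dt} j_{\mu}(t) = -t\, j_{\mu+1}(t)$, which follows from the series for $j_\mu$ (or from $(t^{-\mu}J_\mu)'=-t^{-\mu}J_{\mu+1}$). From it,
\[
D_i\bigl[j_{\nu_i}(x_i\xi_i)\bigr]=\frac{1}{2x_i}\,\xi_i\, j_{\nu_i}'(x_i\xi_i) = -\frac{\xi_i^2}{2}\, j_{\nu_i+1}(x_i\xi_i).
\]
Iterating in each variable gives
\[
D^\beta \mathbb{J}_\nu(x\xi)=\Bigl(-\tfrac12\Bigr)^{n}\xi^{2\beta}\,\mathbb{J}_{\nu+\beta}(x\xi), \qquad |\beta|=n .
\]

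Next, since $f=\mathcal{H}_\nu(\mathcal{H}_\nu f)$, I would differentiate under the integral sign. Using $\xi^{2\beta}\,d\mu_\nu(\xi)=d\mu_{\nu+\beta}(\xi)$, this yields the key representation
\[
D^\beta f(x)=\Bigl(-\tfrac12\Bigr)^{n}\int_{\mathbb{R}_+^d}\mathbb{J}_{\nu+\beta}(x\xi)\,\mathcal{H}_\nu f(\xi)\,d\mu_{\nu+\beta}(\xi)=\Bigl(-\tfrac12\Bigr)^{n}\mathcal{H}_{\nu+\beta}\bigl(\mathcal{H}_\nu f\bigr)(x).
\]
Plancherel for $\mathcal{H}_{\nu+\beta}$ on $L^2_{\nu+\beta}$ then gives
\[
\|D^\beta f\|_{L^2_{\nu+\beta}}=2^{-n}\|\mathcal{H}_\nu f\|_{L^2_{\nu+\beta}}=2^{-n}\bigl\|\xi^\beta\,\mathcal{H}_\nu f\bigr\|_{L^2_\nu}.
\]

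Finally, I would bound $|\xi^\beta|\le|\xi|^n$. For $|\xi|\ge1$, the definition of $M_n$ gives $|\xi|^n\le M_nW(|\xi|)$. For $|\xi|<1$ we have $|\xi|^n\le 1\le W(|\xi|)$, since $W$ is non-decreasing with $W(0)=1$. This region therefore costs a factor $\max(1,M_n)$, which equals $M_n$ provided $M_n\ge1$. That holds, for instance, as soon as $W(1)\le 1$; in general the constant should be read with $M_n$ replaced by $\max(1,M_n)$, or the weight normalized accordingly. Combined with \eqref{E431}, this yields \eqref{Eq3.6}.

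The main obstacle is justifying the differentiation under the integral and the identification $D^\beta f=(-\frac12)^n\mathcal{H}_{\nu+\beta}(\mathcal{H}_\nu f)$ in $L^2_{\nu+\beta}$. Here $f$ is only in $L^2_\nu$, and $\mathcal{H}_\nu f$ need not be integrable a priori. I would use condition (5) of Definition \ref{Def1.2}, $W(t)\ge c_kt^k$, together with \eqref{E431} to get $|\xi|^k\mathcal{H}_\nu f\in L^2_\nu$ for every $k$. Cauchy–Schwarz against $(1+|\xi|)^{-N}$ with $N$ large then shows $\xi^{2\beta}\mathcal{H}_\nu f\,\mathbb{J}_{\nu+\beta}$ is dominated by an $L^1_\nu$ function uniformly in $x$, because $j_\mu$ is bounded. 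This legitimizes differentiating under the integral sign, and it shows $f$ is smooth on $\mathbb{R}_+^d$. Alternatively, one can argue by density: truncate $\mathcal{H}_\nu f$ to compact sets, where everything is an honest integral, and pass to the limit using Plancherel in both $L^2_\nu$ and $L^2_{\nu+\beta}$.
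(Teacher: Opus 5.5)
Your proposal is correct and follows the paper's proof essentially step by step. Both arguments derive $D^\beta f=(-\tfrac12)^n\mathcal{H}_{\nu+\beta}(\mathcal{H}_\nu f)$ from the Bessel differentiation formula, apply Plancherel in $L^2_{\nu+\beta}$, and then bound $|\xi^\beta|$ by $M_nW(|\xi|)$. Your two additions are both valid refinements that the paper omits: the domination argument justifying differentiation under the integral, and the caveat that on $|\xi|<1$ one only has $|\xi|^n\le W(|\xi|)$, so the constant should be $\max(1,M_n)$, since the paper silently uses $|\xi|^n\le M_nW(|\xi|)$ for all $\xi$.
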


   \begin{proof}
   Set \(\nu'=\nu+e_i\) (with \(e_i\) the \(i\)-th standard basis vector). Using the differentiation formula  we obtain
   \[
   \partial_{x_i}f(x)= -x_i\, H_{\nu'}\bigl(\mathcal{H}_{\nu} f\bigr)(x).
   \]
   Then by the definition of \(D_i\), we have
   \[
   D_if(x)= \frac{-1}{2}H_{\nu'}\bigl(\mathcal{H}_{\nu} f\bigr)(x).
   \]
   For \(|\beta|=n\), repeating the previous operation,
   \[
   D^\beta f(x)=  (\frac{-1}{2})^n H_{\nu+\beta}\bigl(\mathcal{H}_{\nu} f\bigr)(x).
   \]
   Then,  
   \begin{equation}\label{EBS1}
   \begin{aligned}
   &\Vert D^\beta f\Vert_{L^2_{\nu+\beta}}\\
   &=(\frac{1}{2})^n\Vert H_{\nu+\beta}\bigl(\mathcal{H}_{\nu} f\bigr)\Vert_{L^2_{\nu+\beta}}= (\frac{1}{2})^n\Vert \mathcal{H}_{\nu} f\Vert_{L^2_{\nu+\beta}}\leq (\frac{1}{2})^n M_n\Vert  W(\cdot)\mathcal{H}_{\nu} f\Vert_{L^2_{\nu }}
   \leq A_w(\frac{1}{2})^nM_n \Vert f\Vert_{L^2_{\nu }}.
   \end{aligned}
   \end{equation}
   \end{proof}
\begin{remark}\label{Remark3.3}
We make a remark to explain why we do not employ the classical derivative operator. Let \( f \in L^2_{\nu}(\mathbb{R}_+^d) \) satisfy \eqref{E431}. Then, as in Lemma 4.4 of \cite{XZW}, for every \( k \in \mathbb{N} \) we have
\begin{equation*}
    \partial_{x_i}^k f(x) = \sum_{j=0}^{\lfloor k/2\rfloor} a_{k,j}x_i^{k-2j}H_{\nu+(k-j)e_i}(H_\nu(f))(x),
\end{equation*}
where \( e_i \) denotes the \(i\)-th standard basis vector in \(\mathbb{R}^d\), and the coefficients \( a_{k,j} \) are given by
\begin{equation}\label{Coe}
    a_{k,j}=\frac{(-1)^{k-j}k!}{2^{j}j!(k-2j)!}.
\end{equation}
Consequently, as for any multi-index \( \beta \in \mathbb{N}^d \) with \( |\beta|=n \), as \cite[Lemma 4.3]{XZW}, the following variant of Bernstein's inequality holds:
\begin{equation}\label{s3}
    \|\partial^\beta_{x} f\|_{L^2_\nu([b,\infty)^d)} \leq C^{n}M_n\frac{|n|!}{\lfloor\frac{|n|}{2}\rfloor!}\|f\|_{L^2_\nu(\mathbb{R}_+^d)},
\end{equation}
where \( C \) is a constant depending on \( A_w \). Comparing this with \eqref{Eq3.6}, we see that the bound contains an extra factor \(\frac{|n|!}{\lfloor\frac{|n|}{2}\rfloor!}\), which makes the sequence
\( C^{n}M_n\frac{|n|!}{\lfloor\frac{|n|}{2}\rfloor!} \) no longer quasi-analytic. This would invalidate the subsequent quasi-analytic arguments. Therefore, we instead seek to establish a Bernstein-type inequality for the derivative operator \( D^{\beta} \).
\end{remark}
\begin{proof}[Proof of Proposition \ref{LLCA}]
\noindent\textbf{Sufficiency  part:}
Without loss of generality, we may assume $L=1$. To see this, suppose that $f \in L_\nu^2(\mathbb{R}^d_+)$ satisfies \eqref{E431} and that $E \subset \mathbb{R}^d_+$ is a $(\lambda, L)$-relatively dense measurable set. It follows immediately that the scaled set 
\[
L^{-1}E = \{x \in \mathbb{R}^d_+ \mid Lx \in E\}
\]
is $(\lambda, 1)$-relatively dense. 

Let $g(x) = f(Lx)$. A change of variables shows that \eqref{E431} transforms into
\[
\bigl\| \mathcal{H}_{\nu} g \cdot W_L \bigr\| _{L^2_\nu(\mathbb{R}_+^d)} \le C_L A_{W} \|f\|_{L^2_\nu(\mathbb{R}_+^d)},
\]
where $W_L(\cdot) = W(|\cdot|/L)$. One can readily verify from Definition \ref{Def1.2} that $W_L$ remains a unique continuation weight. Assuming the validity of \eqref{Ucontinuation} for the case $L=1$, we apply the estimate to $g$ and the set $L^{-1}E$ to obtain
\[
\|g\|_{L^2_\nu(\mathbb{R}_+^d)} \le C(W,d,A_w,\lambda,L,\nu) \|g\|_{L^2_\nu(L^{-1}E)}.
\]

By reverting the scaling $x \mapsto Lx$ and noting that $d\mu_\nu(Lx) = L^{2|\nu|+2d} d\mu_\nu(x)$, we have
\begin{align*}
\|f\|_{L^2_\nu(E)} &= L^{|\nu|+d} \|g\|_{L^2_\nu(L^{-1}E)} \\
&\ge L^{|\nu|+d} C(W,d,A_w,\lambda,L,\nu)^{-1} \|g\|_{L^2_\nu(\mathbb{R}^d_+)} \\
&= C(W,d,A_w,\lambda,L,\nu)^{-1} \|f\|_{L^2_\nu(\mathbb{R}^d_+)}.
\end{align*}
This completes the reduction to the case $L=1$. 

\noindent\textbf{Step 1. Reformulate the problem.}
Let $E'\subset \mathbb{R}_{+}^d$ be a subset defined by the relation $E=\{x\in \mathbb{R}_{+}^d: x^2\in E'\}$ and $dm_{\nu}(s)=\prod_{i=1}^ds_i^{\nu_i}ds_i$. Then, the $(\lambda, 1)$ relatively dense condition for $E$ is equivalent to
\begin{equation}\label{dense1}
|E' \cap Q'| \ge \lambda' |Q'|,
\end{equation}
where $\lambda'$ depends only on $\lambda$ and \(\nu\). We now turn our attention to the study of $g = Pf$, that is, $f(x)=g(x^2)$, $x^2=(x_1^2,\cdots,x_d^2)$. A simple change of variables shows that to show \eqref{Ucontinuation} it is enough to prove an inequality of the form
\begin{equation}\label{UN1}
\|g\|_{L^2(\mathbb{R}_+^d, m_{\nu})} \le C(W,d,A_w,\lambda, \nu) \|g\|_{L^2(E', m_{\nu})}.
\end{equation}
Next, for each $\delta > 0$, we decompose $\mathbb{R}_+^d \setminus Z_\delta$ into the following disjoint shifted unit cubes:
\[
Q_n := \bigl\{ x \in \mathbb{R}_+^d : n_i + \delta \le x_i < n_i + 1 + \delta, \ i=1, \dots, d \bigr\}, \qquad n=(n_1,\cdots,n_d) \in \mathbb{N}_{0}^d.
\]
Then $\cup Q_n'$ covers $\mathbb{R}_+^d \setminus Z_{\delta^2}$.

\noindent\textbf{Step 2. “good” and “bad” cubes.}
Next, to prove \eqref{UN1}. We proceed to split the cubes $\{Q_n'\}$ into “good” and “bad” ones. For any \(\beta\in \mathbb{N}_0^d\), we notice that
   \[
   \begin{aligned}
   \Vert D^\beta f\Vert^2_{L^2_{\nu+\beta}}&= \int_{\mathbb{R}_+^d}|D^\beta f(x)|^2\prod_{i=1}^d x_i^{2(\nu_i+\beta_i)+1}dx_i\\
   &=\int_{\mathbb{R}_+^d}|\partial^\beta g(s)|^2\prod_{i=1}^d s_i^{ \nu_i+\beta_i }ds_i,
   \end{aligned}
   \]
   Then the Bernstein’s inequality \eqref{EBS1} reads that
   \begin{equation}\label{EBS2}
   \int_{\mathbb{R}_+^d}|\partial^\beta g(s)|^2\prod_{i=1}^d s_i^{ \nu_i+\beta_i }ds_i \leq A_w^2(\frac{1}{2})^{2n} M_n^2 \int_{\mathbb{R}_+^d}| g(s)|^2\prod_{i=1}^d s_i^{ \nu_i}ds_i.
   \end{equation}
For given $A_0>2$ which will be taken later, a cube \(Q'_j\) is called \emph{good} if for every \(\beta\) with \(|\beta|=n\),
\[
\int_{Q'_j}|\partial^\beta g(s)|^2d\prod_{i=1}^ds_i^{\nu_i+\beta_i}ds_i
\le A_0^nA_w^2\cdot M_n^2 (\frac{1}{2})^{2n}\int_{Q'_j}|g|^2\prod_{i=1}^ds_i^{\nu_i}ds_i.
\]
Otherwise it is \emph{bad}. For a bad cube there exists some \(\beta\) with \(|\beta|>0\) for which the opposite inequality holds, hence
\[
\int_{Q'_j}|\partial^\beta g(s)|^2 \prod_{i=1}^ds_i^{\nu_i+\beta_i}ds_i
\geq A_0^n (A_w\cdot M_n)^2 (\frac{1}{2})^{2n}\int_{Q'_j}|g|^2\prod_{i=1}^ds_i^{\nu_i}ds_i.
\]
Summing the contributions from all bad cubes, we obtain:
	\begin{equation*}
		\begin{aligned}
			\int_{\bigcup Q_j' \text{ is bad}}  |g(x)|^2 \, dm_{\nu}(x) &\leq \sum_{\beta \in \mathbb{N}^d,|\beta|>0 \text{}}A_0^{\,-n} M_n^{-2}2^{2n}A_w^{-2} \int_{\mathbb{R}_+^d}|\partial^\beta g(s)|^2\prod_{i=1}^d s_i^{ \nu_i+\beta_i }ds_i \\
			&\leq \sum_{\beta \in \mathbb{N}^d,|\beta| > 0 } A_0^{-n} \int_{\mathbb{R}^d_{+}} |g(x)|^2 \, dm_\nu(x)\\
			&=   \left(\left(A_0\right)^{d}\left(A_0-1\right)^{-d} -1\right) \int_{\mathbb{R}^d_{+}} |g(x)|^2 \, dm_\nu(x)\\
			&=\frac{1}{2} \int_{\mathbb{R}_+^d} |g(x)|^2 \, dm_\nu(x).
		\end{aligned}
	\end{equation*}
	The final equality is achieved by choosing $A_0=  \frac{1}{1 - (2/3)^{1/d}} $, which ensures that $\left(A_0\right)^{d}\left(A_0-1\right)^{-d} -1=1/2$.

Combining the above estimates with Lemma \ref{L33} and setting $\delta = \left( \frac{1}{10d C_{A,W,\nu}} \right)^{\frac{1}{\min\{\nu_i\} + 1}}$, we obtain a lower bound for the energy concentrated on the good cubes:
          \begin{equation}\label{s5}
\int_{\bigcup_{n \in \text{good}} Q_n} |f|^2 \, d\mu_\nu \ge \frac{9}{10}\|f\|_{L^2_\nu}^2 - \frac{1}{2}\|f\|_{L^2_\nu}^2 = \frac{2}{5}\|f\|_{L^2_\nu}^2.
\end{equation}

\noindent\textbf{Step 3. Quasi‑analyticity on good cubes.}
The rectangles \({Q'_j}\) have pairwise disjoint interiors and cover \(\mathbb{R}_+^d\setminus Z_{\delta^2}\) up to a set of measure zero. For each \(Q'_j\), let
\[
g_{Q'_j}:=g|_{Q'_j}
\]
denote the restriction of \(g\) to \(Q'_j\). First, we recall the elementary Sobolev inequality for cubes with sidelength 1 (see \cite{BJ}):
 \begin{equation}\label{Sobolev}
     \|g\|_{L^\infty(Q_{unit})} \le C(d) \|g\|_{L^2(Q_{unit})} + C(d) \sum_{|\alpha|=d} \|\partial^\alpha g\|_{L^2(Q_{unit})}.
 \end{equation}
Since $Q'$ will no longer be a cube with sidelength 1, the above elementary inequality is no longer tight, to make use of a more tight estimate, we need a rescaling argument. For this purpose, suppose  $Q_j'$ is a rectangular region with side lengths $L_1$, $L_2$, $\dots$, $L_d$. According to the definition $Q_j'=\{x:\sqrt{x_i}\in[j_i+c,j_i+c+1]\}$, the side length in the i-th dimension is calculated as: $L_i=(j_i+c+1)^2-(j_i+c)^2=2(j_i+c)+1\sim 2j+1$. This implies that the volume of $Q_j'$ is $V\sim\prod_{i=1}^{d}(2j_i+1)$. To map the Sobolev inequality from the unit cube $Q_{unit}$ to $Q_j'$, we apply the coordinate transformation $x_i=L_iy_i$, where $y\in Q_{unit}$. Let $\phi (y)=g(Ly)$. Now use the Sobolev inequality \eqref{Sobolev} for $\phi (y)$ we have 
\begin{equation*}
     \|\phi\|_{L^\infty( Q_{unit})} \le C(d) \|\phi\|_{L^2( Q_{unit})} + C(d) \sum_{|\alpha|=d} \|\partial^\alpha \phi\|_{L^2( Q_{unit})}.
 \end{equation*}
The maximum value of the function remains invariant under coordinate scaling. Since the volume element transforms as $dx = V dy$, we have 
$\|\phi\|_{L^2( Q_{unit})} = \frac{1}{\sqrt{V}} \|g\|_{L^2(Q'_j)}$. For a general multi-index $\alpha = (\alpha_1, \dots, \alpha_d)$ with $|\alpha|=d$, the chain rule $\frac{\partial}{\partial y_i} = L_i \frac{\partial}{\partial x_i}$ implies that $\partial_y^\alpha \phi = \left( \prod_{i=1}^d L_i^{\alpha_i} \right) \partial_x^\alpha g$. Let $L^\alpha = \prod_{i=1}^d L_i^{\alpha_i}$. The $L^2$ norm of the derivative transforms as:
\[ \|\partial_y^\alpha \phi\|_{L^2( Q_{unit})} = \left( \int_{ Q_{unit}} |L^\alpha \partial_x^\alpha g|^2 dy \right)^{1/2} = \left( \int_{Q'_j} |L^\alpha \partial_x^\alpha g|^2 \frac{1}{V} dx \right)^{1/2} = \frac{L^\alpha}{\sqrt{V}} \|\partial_x^\alpha g\|_{L^2(Q'_j)}.\]
By substituting these scaling relations into the standard Sobolev inequality on the unit cube $ Q_{unit}$, we obtain
 \[\|g\|_{L^\infty(Q'_j)} \le C(d) \left( \frac{1}{\sqrt{V}} \|g\|_{L^2(Q'_j)} + \sum_{|\alpha|=d} \frac{L^\alpha}{\sqrt{V}} \|\partial_x^\alpha g\|_{L^2(Q'_j)} \right). \]
For every multi-index $\beta$ with $|\beta|=n$, using this Sobolev embedding and the definition of a good cube we obtain
\[
\begin{aligned}
&\|\partial^\beta g_{Q_j'}\|_{L^\infty}\\
&\leq C_d\frac{1}{\sqrt{|Q_j'|}}\left(\left(\int_{Q_j'}| \partial^\beta g|^2 dx\right)^{1/2}+\sum_{|\alpha|=d}L^\alpha\left(\int_{Q_j'}|\partial^{\beta+\alpha}g|^2 dx\right)^{1/2}\right) \\
&\leq C_d\frac{1}{\sqrt{|Q_j'|}}\left(\left(\int_{Q_j'}| \partial^\beta g|^2 dx\right)^{1/2}+\sum_{|\alpha|=d}\left(\int_{Q_j'}|\partial^{\beta+\alpha}g|^2\prod_{i=1}^dx_i^{\alpha_i} dx\right)^{1/2}\right) \\
&\leq \frac{C_d}{L^{\beta}\sqrt{|Q_j'|}\sqrt{\inf_{x\in Q_j'} w(x)}}\left(\left(\int_{Q_j'}| \partial^\beta g|^2 \prod_{x_i}x_i^{\nu_i+\beta_i}dx\right)^{1/2}+\sum_{|\gamma|=n+d}\left(\int_{Q_j'}|\partial^{\gamma}g|^2\prod_{x_i}x_i^{\nu_i+\gamma_i} dx\right)^{1/2}\right) \\
&\leq\frac{C(d)A_wM_{n+d}}{L^{\beta}\sqrt{|Q_j'|}\sqrt{\inf_{x\in Q_j'} w(x)}} \Vert g\Vert_{L^2(Q'_j, m_{\nu})}.
\end{aligned}
\]
Where $C(d)$ depends only on $d$, $w(x)=\prod_{i=1}^{d}x_i^{\nu_i}$.
Then take \(\widetilde{g_{Q'_j}}=\frac{\sqrt{|Q_j'|}\sqrt{\inf_{s\in Q'_j} w(s)} }{C(d)A_w\Vert g\Vert_{L^2(Q'_j, m_{\nu})}}g_{Q'_j}\), then we obtain
\begin{equation}
\|\partial^\beta \widetilde{g_{Q_j'}}\|_{L^\infty}\leq \frac{M_{n+d}}{L^{\beta}}.
\end{equation}
Moreover, we have
\[
\Vert \widetilde{g_{Q'_j}}\Vert_{L^\infty}\geq \frac{\sqrt{\inf_{s\in Q'_j} w(s)} }{C(d)A_w\sqrt{\sup _{s\in Q'_j} w(s)}}\geq C(d,A_w,\delta)>0.
\]

\noindent\textbf{Step 4. Quantitative unique continuation on good cubes.}

To apply the quantitative unique continuation property provided by Lemma \ref{LQA}, we rescale the function $\widetilde{g}_{Q'_j}$. Specifically, let $x = \phi(y)$ denote the affine transformation defined by $x_i = a_i + L_i y_i$, where $y = (y_1, \dots, y_d) \in [0,1]^d$ and $a_i$ is the lower bound of the $i$-th coordinate of $Q'_j$. Under this mapping, we define the corresponding function on the unit cube $[0,1]^d$ as $\widetilde{\phi}_{Q'_j}(y) = \widetilde{g}_{Q'_j}(\phi(y))$.

By the chain rule and the derivative estimates established in the previous step, it follows that:
\begin{equation}
\|\partial_y^\beta \widetilde{\phi}_{Q_j'}\|_{L^\infty([0,1]^d)} = L^\beta \|\partial_x^\beta \widetilde{g}_{Q_j'}\|_{L^\infty(Q'_j)} \leq M_{n+d}.
\end{equation}
Furthermore, the construction of $\widetilde{g}_{Q'_j}$ ensures that the $L^\infty$ norm is bounded away from zero:
\begin{equation}
\|\widetilde{\phi}_{Q_j'}\|_{L^\infty([0,1]^d)} = \|\widetilde{g}_{Q'_j}\|_{L^\infty(Q'_j)} \geq C(d, A_w, \delta) > 0.
\end{equation}
Let $E'$ be a measurable set satisfying the relative measure condition $|E' \cap Q'_j| \geq \gamma |Q'_j|$. Under the affine transformation, the set $\widetilde{E}' = \{ y \in [0,1]^d : a + Ly \in E' \}$ satisfies $|\widetilde{E}'| \geq \gamma$ in the unit cube. Applying Lemma \ref{LQA} to $\widetilde{\phi}_{Q_j'}$, we obtain:
\begin{equation}
\|\widetilde{\phi}_{Q_j'} \mathbf{1}_{\widetilde{E}'}\|_{L^1([0,1]^d)} \geq c(\mathscr{M}, d, \gamma, A_w, \delta) > 0.
\end{equation}
To translate this back to the original coordinates, we note that $dx = |Q'_j| dy$. Thus:
\begin{equation}
\int_{Q'_j} |\widetilde{g}_{Q'_j}(x)| \mathbf{1}_{E'}(x) \, dx = |Q'_j| \int_{[0,1]^d} |\widetilde{\phi}_{Q_j'}(y)| \mathbf{1}_{\widetilde{E}'}(y) \, dy \geq |Q'_j| c(\mathscr{M}, d, \gamma, A_w, \delta).
\end{equation}
Given that the weight $w(x)$ varies minimally on each $Q'_j$ (specifically, $w(x) \approx \inf_{Q'_j} w$), and because for quasi-analytic classes the $L^1$ and $L^2$ norms are locally comparable, we transform the estimate back to the weighted $L^2$ norm of $g$:
\begin{equation}
\|g\|_{L^2(E' \cap Q'_j,  m_{\nu})} \geq c(\mathscr{M}, d, \gamma, A_w, \delta) \|g\|_{L^2_(Q'_j, m_{\nu})}.
\end{equation}
Finally, we perform the summation over all good cubes. By the property of good cubes defined in \eqref{s5}, and noting that the constant $\delta$ depends only on $W, A_w$, and $\nu$, we conclude:
\begin{equation}
\begin{aligned}
\|g\|_{L^2(\mathbb{R}_+^d, m_{\nu})}^2 &\leq C \sum_{Q'_j \,\,is\,\, good} \|g\|_{L^2_(Q'_j, m_{\nu})}^2 \\
&\leq C \sum_{Q'_j \,\,is\,\, good} c^{-2} \|g \mathbf{1}_{E' \cap Q'_j}\|_{L^2(\mathbb{R}_+^d, m_{\nu})}^2 \\
&\leq C(W, d, A_w, \lambda, \nu) \|g \mathbf{1}_{E'}\|_{L^2(\mathbb{R}_+^d, m_{\nu})}^2,
\end{aligned}
\end{equation}
which yields the desired global unique continuation estimate \eqref{UN1}.
\par \textbf{Necessity part:} 
Following the approach in \cite{BJ}, we now demonstrate that the condition 
\begin{equation*}
    \int_{0}^{\infty} \frac{\log W(t)}{1 + t^2} \, dt = \infty
\end{equation*}
is necessary for the unique continuation property \eqref{Ucontinuation} to hold. Let $\mu_n := M_{n-1}/M_n$; then $\{\mu_n\}$ is a non-increasing sequence satisfying $\mu_n \leq 1$. 

Suppose that the integral condition fails, i.e.,
\begin{equation*}
    \int_{0}^{\infty} \frac{\log W(t)}{1 + t^{2}} \, dt < \infty.
\end{equation*}
According to \cite[Proposition 2.2]{BJ}, this implies that $\sum_{n} \mu_{n} < \infty$. We employ a Paley–Wiener-type construction to show that there exist functions $f$, supported in arbitrarily small cubes, such that 
\begin{equation*}
    \int_{\mathbb{R}_+^d} |\mathcal{H}_{\nu} f(\xi)|^{2} W(|\xi|)^{2} \, d\mu_\nu(\xi) < \infty,
\end{equation*}
where $d\mu_\nu(\xi) = \prod_{i=1}^d \xi_i^{2\nu_i+1} d\xi_i$.

Fix $\varepsilon > 0$. Choose an integer $n_{0} \geq \max\{100, \lfloor 2|\nu|+10 \rfloor\}$ such that $\sum_{n \geq n_{0}} \mu_{n} < \varepsilon$. We define the Hankel transform of $f$ as
\begin{equation}\label{EPR1}
    \mathcal{H}_{\nu} f(\xi) := (M_{n_{0}-1})^d \prod_{i=1}^d \left( \left[ \frac{\sin((\varepsilon / n_{0}) \xi_i)}{(\varepsilon / n_{0}) \xi_i} \right]^{2n_{0}} \prod_{k \geq n_{0}} \frac{\sin(\mu_{k} \xi_i)}{\mu_{k} \xi_i} \right).
\end{equation}
By construction, $\mathcal{H}_{\nu} f$ is even with respect to each variable $\xi_i$. 

Extending $\xi$ to the complex domain $z \in \mathbb{C}^d$, the Phragmén–Lindelöf principle implies that
\begin{equation*}
    \left| \frac{\sin (\mu_n z_i)}{\mu_n z_i} \right| \leq e^{\mu_n |\operatorname{Im} z_i|},
\end{equation*}
which leads to the growth estimate
\begin{equation*}
    |\mathcal{H}_{\nu} f(z)| \leq (M_{n_{0}-1})^d e^{3\varepsilon \sum_{i=1}^d |\operatorname{Im} z_i|}.
\end{equation*}
Since $n_0 \geq \lfloor 2|\nu|+10 \rfloor$, it follows that $\mathcal{H}_{\nu} f \in L_\nu^2(\mathbb{R}_+^d)$. Applying Lemma~\ref{TPWS} (the Paley–Wiener theorem for the Hankel transform), we conclude that $\operatorname{supp} f \subset [0, 3\varepsilon]^d$.

To estimate the decay of $\mathcal{H}_{\nu} f$, consider $|\xi_i| \geq 1$. For any $n \geq n_0$, let $\ell = n$ in the following product estimate. Using the property $|\sin t| \leq \min\{1, |t|\}$, we have for each $i \in \{1, \dots, d\}$ and any $\ell \geq n_0$:
\begin{equation*}
    M_{n_0-1} \left| \prod_{k \geq n_0} \frac{\sin(\mu_k \xi_i)}{\mu_k \xi_i} \right| \leq M_{n_0-1} \prod_{n_0 \leq k \leq \ell} \frac{1}{\mu_k |\xi_i|} = \frac{M_\ell}{|\xi_i|^{\ell - n_0 + 1}}.
\end{equation*}
Substituting this into \eqref{EPR1}, for $n \geq n_0$, we obtain
\begin{equation*}
    |\mathcal{H}_{\nu} f(\xi)| \prod_{i=1}^d |\xi_i|^n \leq \left(\frac{n_0}{\varepsilon}\right)^{2n_0 d} \prod_{i=1}^d |\xi_i|^n \prod_{i=1}^d \frac{M_n}{|\xi_i|^{n + n_0 + 1}} = \left(\frac{n_0}{\varepsilon}\right)^{2n_0 d} \prod_{i=1}^d \frac{M_n}{|\xi_i|^{n_0 + 1}}.
\end{equation*}
Similarly, for $n \leq n_0$, we have
\begin{equation*}
    |\mathcal{H}_{\nu} f(\xi)| \prod_{i=1}^d |\xi_i|^n \leq (M_{n_{0}-1})^d \left( \frac{n_{0}}{\varepsilon} \right)^{nd} \prod_{i=1}^d \left| \frac{\sin((\varepsilon/n_{0})\xi_i)}{(\varepsilon/n_{0})\xi_i} \right|^{2n_{0}-n} \leq \left(\frac{n_0}{\varepsilon}\right)^{2n_0 d} \prod_{i=1}^d \frac{M_{n_0}}{|\xi_i|^{n_0 + 1}}.
\end{equation*}
Combining these cases, there exists a constant $C = C(n_0, \varepsilon, d)$ such that
\begin{equation*}
    |\mathcal{H}_{\nu} f(\xi)| \leq C \prod_{i=1}^d \frac{\max\{M_n, M_{n_0}\}}{|\xi_i|^{n+n_0+1}}.
\end{equation*}

Finally, appealing to \cite[Lemma 2.1]{BJ}, for $|\xi| = \sqrt{\sum \xi_i^2}$ with $|\xi_i| \geq 1$, the weight $W$ satisfies
\begin{equation*}
    W(|\xi|) \leq \inf_{n} \frac{|\xi|^{n+1}}{M_n} \leq \frac{C'}{|\mathcal{H}_{\nu} f(\xi)|} \prod_{i=1}^d \frac{1}{|\xi_i|^{n_0}}.
\end{equation*}
This bound ensures that
\begin{equation*}
    \int_{\mathbb{R}_+^d} |W(|\xi|)|^2 |\mathcal{H}_{\nu} f(\xi)|^2 \prod_{i=1}^d \xi_i^{2\nu_i+1} \, d\xi_i < \infty,
\end{equation*}
completing the proof that the finiteness of the log-integral precludes the unique continuation property.
\end{proof}
\section{The One-Dimensional Fractal Uncertainty Principle: Theorem \ref{Th1.3}}\label{Sec4}
This section is devoted to the proof of Theorem~\ref{Th1.3}. As a first step, we prove a Beurling--Malliavin type theorem for the Fourier--Bessel transform. This theorem is then applied to derive a unique continuation result for \(L^2\) functions whose spectral support lies in regular sets. With these ingredients at hand, we conclude the proof of Theorem~\ref{Th1.3} by employing the iterative strategy due to Bourgain and Dyatlov \cite{BD}.
\subsection{The Beurling–Malliavin Type Theorem}\label{sec:BM}
In this subsection, we proceed to establish a Beurling–Malliavin-type theorem for the Fourier–Bessel transform by leveraging the fundamental results developed in \cite{CA}. We subsequently apply this theorem to demonstrate that sets with a fractal structure admit suitable damping functions (see Definition \ref{def:damping}).

Let $\omega: \mathbb{R}^d \to \mathbb{R}_{\le 0}$ be a weight function. Following the framework in \cite{CA}, we define the associated growth functions as follows:
\begin{align}
    G(x) &:= \int_{1/2}^{2} |\omega(sx)| \, ds, \label{def:G} \\
    G^*(r) &:= \sup_{|x|=r} G(x). \label{def:Gstar}
\end{align}

In his work \cite{CA}, Cohen obtained two extraordinarily deep results concerning such weights, the first being the plurisubharmonic Beurling–Malliavin lemma (PSH-BM).

\begin{lemma}[PSH-BM]\label{PSHBM1}
    Suppose that $\omega: \mathbb{R}^d \to \mathbb{R}_{\le 0}$ is a weight function satisfying the following conditions:
    \begin{align}
        \omega(x) &= 0, && \text{for } |x| \le 2, \label{LE31} \\
        |\mathscr{D}^\alpha \omega(x)| &\le C_{\mathrm{reg}} \langle x \rangle^{1-|\alpha|}, && \text{for } 0 \le |\alpha| \le 3, \label{LE32} \\
        \int_0^{\infty} \frac{G^*(r)}{1+r^2}\,dr &\le C_{\mathrm{gr}}, \label{LE33}
    \end{align}
    where $C_{\mathrm{reg}}$ and $C_{\mathrm{gr}}$ are positive constants. Then there exists a continuous plurisubharmonic function $u: \mathbb{C}^d \to \mathbb{R}$ such that:
    \begin{align}
        u(x) &\le \omega(x), && \text{for } x \in \mathbb{R}^d, \notag \\
        u(x) &= 0, && \text{for } |x| \le 2, \ x \in \mathbb{R}^d, \label{LE34} \\
        |u(x_1) - u(x_2)| &\le C_{\mathrm{Lip}} |x_1 - x_2|, && \text{for } x_1, x_2 \in \mathbb{R}^d, \label{LE35} \\
        u(x) \le u(x + iy) &\le u(x) + \rho |y|, && \text{for } x + iy \in \mathbb{C}^d. \label{LE36}
    \end{align}
    Moreover, the constants satisfy $C_{\mathrm{Lip}} \le C_d C_{\mathrm{reg}}$ and $\rho \le C_d \max(C_{\mathrm{reg}}, C_{\mathrm{gr}})$, where $C_d$ depends only on the dimension.
\end{lemma}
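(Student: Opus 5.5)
This lemma is Cohen's plurisubharmonic Beurling--Malliavin theorem \cite{CA}, and in the paper it is quoted rather than reproved. If I had to justify it, I would follow Cohen's structure. The plan is to build $u$ as a Poisson-type harmonic extension of a suitably \emph{minorized} weight, corrected by $\rho|y|$ so that plurisubharmonicity holds across the real subspace $\mathbb{R}^d\subset\mathbb{C}^d$.

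\textbf{One-dimensional model.} For $\tilde\omega\le \omega$ with $\int \frac{|\tilde\omega|}{1+t^2}\,dt<\infty$, set
\[
v(x+iy)=(P_{|y|}*\tilde\omega)(x)+\rho|y|,
\]
where $P_y$ is the Poisson kernel. Off the real axis $v$ is harmonic, and the finiteness of the Poisson integral is exactly the growth hypothesis \eqref{LE33}.

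1. Across $y=0$, $v$ is subharmonic if and only if the jump of the normal derivative is nonnegative. This reads $\rho-\mathcal{H}(\tilde\omega')\ge 0$, where $\mathcal{H}$ denotes the Hilbert transform (not the Hankel transform).
2. So the real content is the classical Beurling--Malliavin step: find a minorant $\tilde\omega\le\omega$ with a comparable logarithmic integral and with $\mathcal{H}(\tilde\omega')$ bounded above by a constant depending on $C_{\mathrm{reg}}$ and $C_{\mathrm{gr}}$.
3. I would do this by the standard decomposition of $\{x:\mathcal{H}\omega'(x)<-\rho\}$ into intervals. On each interval, $\omega$ is replaced by its harmonic-majorant-type linear interpolant. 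The regularity bound \eqref{LE32} controls $\omega'$ and $\omega''$, and therefore the local error.
4. The inequalities $u(x)\le u(x+iy)\le u(x)+\rho|y|$ then follow. The lower bound holds because $P_{|y|}*\tilde\omega\ge \tilde\omega$ up to the added $\rho|y|$, which I would arrange through the sign of $\mathcal{H}\tilde\omega'$. The upper bound holds because $\tilde\omega$ is Lipschitz with constant $\lesssim C_{\mathrm{reg}}$.

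\textbf{Higher dimensions.} Following Cohen, I would produce a PSH function on $\mathbb{C}^d$ by combining one-dimensional constructions along complex lines $\{x+\zeta\theta:\zeta\in\mathbb{C}\}$, for $\theta\in S^{d-1}$ and $x\perp\theta$.

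1. The averaged quantity $G$ in \eqref{def:G}, with $\int_{1/2}^2|\omega(sx)|\,ds$, together with the radial supremum $G^*$, controls the line integrals uniformly in direction. This is why \eqref{LE33} is phrased via $G^*$.
2. I would take $u$ to be the upper envelope over such line constructions, regularized: a supremum of PSH functions that is continuous is PSH. The Lipschitz bound \eqref{LE35} is then inherited from the uniform one-dimensional bounds.
3. The normalization \eqref{LE34} comes from \eqref{LE31}, after subtracting a harmless constant and truncating with $\max(\cdot,0)$ near the ball $\{|x|\le 2\}$.

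\textbf{Main obstacle.} The hard step is not PSH in the abstract. It is making the upper envelope over lines still satisfy $u\le\omega$ on $\mathbb{R}^d$ with $\rho$ controlled independently of the direction. A one-dimensional minorant along one line can exceed $\omega$ at points reached by other lines. My first attempt would be a multiscale (dyadic in $|x|$) construction that uses the derivative bounds \eqref{LE32} up to order three, so that $\omega$ is nearly affine on balls of radius $\sim\langle x\rangle^{1/2}$ and the one-dimensional minorants can be glued consistently. If that gluing fails, I would simply invoke \cite{CA}, which is what the paper does.
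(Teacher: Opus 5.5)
Your closing fallback is exactly what the paper does: the paper gives no proof of this lemma and simply quotes it from \cite{CA}. The self-contained argument you sketch does not stand on its own, though, and it fails in places other than the one you flag.

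\textbf{The case $d=1$.} The upper bound $u(x+iy)\le u(x)+\rho|y|$ in \eqref{LE36} does not follow from $\tilde\omega$ being Lipschitz. The Poisson kernel has no first moment: for $\tilde\omega(s)=-\max(1-|s|,0)$ one gets $P_t*\tilde\omega(0)-\tilde\omega(0)\approx\frac{2t}{\pi}\log\frac1t$. Since $\partial_t(P_t*\tilde\omega)=-P_t*\mathcal{H}(\tilde\omega')$, with $\mathcal{H}$ your Hilbert transform, subharmonicity and the lower bound need $\mathcal{H}(\tilde\omega')\le\rho$. The upper bound additionally needs $\mathcal{H}(\tilde\omega')\ge-\rho$.

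Conversely, under \eqref{LE32}--\eqref{LE33} no minorant is needed at all. Split $\mathrm{p.v.}\int\omega'(s)(x-s)^{-1}\,ds$ at $|s-x|=\langle x\rangle/2$. The near part is $O(C_{\mathrm{reg}})$ because $|\omega''|\lesssim C_{\mathrm{reg}}\langle x\rangle^{-1}$. After an integration by parts, the far part is $O(C_{\mathrm{reg}})+O\bigl(\int|\omega(s)|(1+s^2)^{-1}ds\bigr)=O(C_{\mathrm{reg}}+C_{\mathrm{gr}})$. Hence $u(x+iy)=P_{|y|}*\omega(x)+\rho|y|$ with $\rho\sim C_{\mathrm{reg}}+C_{\mathrm{gr}}$ already proves the lemma for $d=1$. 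The Beurling--Malliavin interval decomposition in your step 3 is not where the difficulty lies.

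\textbf{The case $d\ge2$.} Here the genuine gap is plurisubharmonicity, not $u\le\omega$. A complex line $\{x+\zeta\theta:\zeta\in\mathbb{C}\}$ with $x,\theta$ real lies in $\mathbb{R}^d+i\mathbb{R}\theta$. So through a point $x+iy$ with $y\ne0$ passes exactly one line of your family, the one with $\theta=\pm y/|y|$. Your ``upper envelope'' is therefore a single function,
\[u(x+iy)=\int_{\mathbb{R}}P_{|y|}(s)\,\tilde\omega_\ell(x+s\,y/|y|)\,ds+\rho|y|,\]
where $\tilde\omega_\ell$ is the minorant chosen on $\ell=x+\mathbb{R}\,y/|y|$.

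The principle that a continuous supremum of PSH functions is PSH does not apply, because the line constructions are not PSH functions on $\mathbb{C}^d$. They give subharmonicity only on complex lines whose direction is a complex multiple of $\operatorname{Im}z$. On complex lines along which the direction of $\operatorname{Im}z$ rotates, nothing in your sketch controls the Levi form, and that is exactly where the higher-dimensional difficulty sits.

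Meanwhile $u\le\omega$ on $\mathbb{R}^d$, the obstacle you single out, is automatic: at a real point every line construction has boundary value $\le\omega$. If instead you extend each line construction to $\mathbb{C}^d$ as a function of $\langle z,\theta\rangle$ alone, it does become PSH, but then it cannot lie below $\omega$. For instance, $\omega(x)=-|x|/\log^2(2+|x|)$, cut off near the origin, satisfies all the hypotheses and is unbounded below on every hyperplane.

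The dyadic gluing you propose addresses neither issue. Apart from the citation of \cite{CA}, the higher-dimensional part of the proposal is missing its key step.
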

The second fundamental result is the \textit{analytic Beurling–Malliavin lemma} (A-BM), which constructs a band-limited multiplier from the plurisubharmonic potential.
\begin{lemma}[A-BM]\label{ABM1}
    Let $u: \mathbb{C}^d \to \mathbb{R}$ be a plurisubharmonic function such that $u|_{\mathbb{R}^d} \le 0$ and properties \eqref{LE34}, \eqref{LE35}, and \eqref{LE36} hold. Then there exists an entire function $f: \mathbb{C}^d \to \mathbb{C}$ such that:
    \begin{align}
        |f(x+iy)| &\le A e^{2\rho|y|}, && \text{for some } A > 0, \label{LE37} \\
        |f(x)| &\ge \frac{1}{2}, && \text{for all } x \in B_{r_{\min}}, \label{LE38} \\
        |f(x)| &\le C e^{u(x)}, && \text{for all } x \in \mathbb{R}^d, \label{LE39}\\
        \int_{\mathbb{R}^d}|f(x)|^2dx&<\infty.
    \end{align}
    where constant $r_{\min}=c_d\min(\rho,\rho^{-1})$ and $C= C_de^{C_{Lip}}\max(\rho^{-C_d},e^{2\rho})$.
\end{lemma}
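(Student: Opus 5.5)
The plan is to build the multiplier by Hörmander's $L^2$ method for the $\bar\partial$-operator on $\mathbb{C}^d$, using the plurisubharmonic function $u$ as the weight. This is the strategy behind Cohen's A-BM in \cite{CA}, and we would mostly follow it. The steps are: produce a holomorphic $F$ with $F\approx 1$ near the origin and with $\int |F|^2e^{-2u}$ under control; convert the weighted $L^2$ bound into the pointwise bounds \eqref{LE37} and \eqref{LE39}, using \eqref{LE35} and \eqref{LE36}; and finally multiply by a band-limited factor to obtain $L^2(\mathbb{R}^d)$.

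\textbf{Step 1 ($\bar\partial$-correction).} Let $r=c\min(\rho,\rho^{-1})$ and let $\chi\in C_c^\infty(\mathbb{C}^d)$ satisfy $\chi\equiv 1$ on $B(0,r)$ and $\operatorname{supp}\chi\subset B(0,2r)$.
\begin{itemize}
\item \emph{Bounds near the origin.} On $\operatorname{supp}\chi$ we have $u=0$ on real points by \eqref{LE34}. Then \eqref{LE35} and \eqref{LE36} give $|u|\le C_{\mathrm{Lip}}r+\rho r\lesssim 1$ there, where $rC_{\mathrm{Lip}}$ is harmless once absorbed into the constant $e^{C_{\mathrm{Lip}}}$.
\item \emph{The weight.} Take $\varphi(z)=2u(z)+2(d+1)\log(|z|/r)\,\mathbf 1_{\{\text{near }0\}}+\epsilon\log(1+|z|^2)$. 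The singular term should really be the globally PSH function $2(d+1)\log|z|$, balanced by $(d+1)\log(1+|z|^2)$ at infinity. This $\varphi$ is plurisubharmonic.
\item \emph{Solving the equation.} Hörmander's theorem gives $v$ with $\bar\partial v=\bar\partial\chi$ and
\[
\int|v|^2e^{-\varphi}(1+|z|^2)^{-2}\le\int|\bar\partial\chi|^2e^{-\varphi}\lesssim r^{-2}\,\mathrm{vol}(B_{2r}) .
\]
The right-hand side is finite because $\bar\partial\chi$ is supported in the annulus $r\le|z|\le 2r$, away from the logarithmic pole.
\item \emph{The holomorphic function.} The singular weight forces $v$ to vanish to order $\ge 1$ at $0$. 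Then $F:=\chi-v$ is entire, and Cauchy estimates on $B(0,r)$ give $|v|\le 1/2$ on $B(0,r_{\min})$ once $c_d$ is small. This yields \eqref{LE38}.
\end{itemize}

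\textbf{Step 2 (pointwise bounds).} For a real point $x$, apply the sub-mean value property of $|F|^2$ on the polydisc $P(x,1)$:
\[
|F(x)|^2\lesssim\int_{P(x,1)}|F|^2\le \sup_{P(x,1)}e^{\varphi}(1+|z|^2)^2\int|F|^2e^{-\varphi}(1+|z|^2)^{-2}.
\]
On $P(x,1)$, \eqref{LE35} and \eqref{LE36} give $u\le u(x)+C_{\mathrm{Lip}}+\rho$. This produces $|F(x)|\le C_de^{C_{\mathrm{Lip}}}e^{\rho}\langle x\rangle^{N}e^{u(x)}$. The polynomial loss $\langle x\rangle^N$ is removed in Step 3. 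Running the same argument at $x+iy$ and using $u(x+iy)\le u(x)+\rho|y|\le\rho|y|$ gives $|F(x+iy)|\lesssim \langle x+iy\rangle^N e^{\rho|y|}$.

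\textbf{Step 3 (decay and type).} Set
\[
f=F\cdot\prod_{j=1}^d\Bigl(\frac{\sin(\rho z_j/(2Nd))}{\rho z_j/(2Nd)}\Bigr)^{2N}.
\]
This factor is entire and has exponential type $\le\rho$. It is $\ge 1/2$ on $B_{r_{\min}}$ after shrinking $c_d$, which keeps \eqref{LE38}. It decays like $\langle x\rangle^{-2N}$ on $\mathbb{R}^d$ up to a factor $\rho^{-C_d}$. The product therefore satisfies \eqref{LE37} with type $2\rho$, satisfies \eqref{LE39} with $C=C_de^{C_{\mathrm{Lip}}}\max(\rho^{-C_d},e^{2\rho})$, and lies in $L^2(\mathbb{R}^d)$.

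\textbf{Main obstacle.} The delicate point is choosing the plurisubharmonic weight so that three requirements hold simultaneously: Hörmander's theorem applies globally on $\mathbb{C}^d$, $v$ is forced to be small near the origin, and the polynomial loss stays independent of $u$. This is where the logarithmic singular term must be combined with a compensating $\log(1+|z|^2)$. Tracking the exact dependence of the constants on $\rho$ and $C_{\mathrm{Lip}}$ in the stated form is also delicate; for that precise bookkeeping we would defer to \cite{CA}.
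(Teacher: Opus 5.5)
The paper gives no proof of this lemma; it imports it verbatim from \cite{CA}. Your H\"ormander $L^2$--$\bar\partial$ outline is the standard route to it. As written, however, Step~1 does not deliver $r_{\min}=c_d\min(\rho,\rho^{-1})$.

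\textbf{The scaling gap in Step~1.} H\"ormander's estimate with the factor $(1+|z|^2)^{-2}$ is not dilation invariant. With $\chi$ at scale $r$ and your normalized weight, the right-hand side is $\sim r^{2d-2}$, as you computed. On $B(0,r)$ this only gives $\int_{B(0,r)}|v|^2\lesssim r^{2d-2}$, hence $\sup_{B(0,r/2)}|v|\lesssim r^{-1}$ rather than $O(1)$. The weight $|z|^{-2d-2}$ forces second-order vanishing of $v$ at $0$. Even so, the Schwarz lemma then gives $|v|\le 1/2$ only for $|z|\lesssim r^{3/2}$, which is much smaller than $c_d r$ as $\rho\to 0$ or $\rho\to\infty$.

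The remedy is to solve the $\bar\partial$-problem in the rescaled variable $w=z/r$:
\begin{itemize}
\item Take $\chi(z)=\chi_1(z/r)$, solve $\bar\partial v_1=\bar\partial\chi_1$ with the plurisubharmonic weight $2u(rw)+2(d+1)\log|w|$, and set $v(z)=v_1(z/r)$.
\item Since $0\le u\le 2\rho r\le 2c$ on $B(0,2r)$, you now get $\sup_{B(0,r/2)}|v|\lesssim_d 1$, hence $|v|\le 1/4$ on $B(0,c_d r)$.
\item The price is that the global bound now carries the weight $e^{-2u}(r/|z|)^{2d+2}(1+|z|^2/r^2)^{-2}$. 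This loses a factor $r^{-C_d}\lesssim \max(\rho,\rho^{-1})^{C_d}$ in Step~2.
\item That loss is absorbed into $\max(\rho^{-C_d},e^{2\rho})$, since $\rho^{C_d}e^{\rho}\lesssim_d e^{2\rho}$.
\end{itemize}

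\textbf{Two further repairs.} First, neither $\log(|z|/r)\mathbf{1}_{\{\text{near }0\}}$ nor $2(d+1)\log|z|-(d+1)\log(1+|z|^2)$ is plurisubharmonic. The Levi form of $-\log(1+|z|^2)$ is negative in the radial direction, where that of $\log|z|$ vanishes. Keep the global term $2(d+1)\log|z|$ with no compensation at infinity; Step~3 removes the resulting polynomial growth anyway.

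Second, in Step~2 the quantity $\int|F|^2e^{-\varphi}(1+|z|^2)^{-2}$ is infinite, because $F(0)=1$ while $e^{-\varphi}\gtrsim|z|^{-2d-2}$ near $0$. Your displayed inequality is therefore vacuous. Apply the sub-mean value inequality to the holomorphic $F$, split $\int_B|F|^2\le 2\int_B|\chi|^2+2\int_B|v|^2$, and use the weighted bound only for $v$. Also use Euclidean balls $B(x,1)\subset\mathbb{C}^d$ rather than polydiscs. On $P(x,1)$ you only get $u\le u(x)+\sqrt{d}\,(C_{\mathrm{Lip}}+\rho)$, whereas balls give the stated factor $e^{C_{\mathrm{Lip}}}$.

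With these changes, Steps~2 and~3 go through as you describe.
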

Combining the previous Paley–Wiener theorem (Lemma \ref{TPWS}) with the above PSH-BM lemma and the A-BM lemma yields the following theorem.
\begin{theorem}\label{L330}
    Suppose that $\omega: \mathbb{R}^d \to \mathbb{R}_{\le 0}$ is an even weight function satisfying \eqref{LE31}, \eqref{LE32}, \eqref{LE33}.

    Then for every $\sigma > 0$ and any $\epsilon>0$, there exists a function $f_0 \in L_\nu^2(\mathbb{R}_+^d)$ such that
    \[
    \operatorname{supp}(\mathcal{H}_{\nu} f_0) \subset [0,\sigma+\epsilon]^d,
    \]
 Moreover, there exists $r_{\min} > 0$ with
    \[
    |f_0(x)| \ge \frac{1}{2} \qquad \text{for all } x \in B_+(0,r_{\min}),
    \]
    and a constant $c = c(d,C_{\mathrm{reg}},C_{\mathrm{gr}})$ such that
    \[
    |f_0(x)| \le C e^{\omega(x)} \qquad \text{for all } x \in \mathbb{R}_+^d.
    \]
\end{theorem}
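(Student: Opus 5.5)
The plan is to apply Lemma~\ref{PSHBM1} and then Lemma~\ref{ABM1} to (a rescaled copy of) $\omega$, which gives an entire function $F$ on $\mathbb{C}^d$. From $F$ we build an even, real-valued product that plays the role of $f_0$. Its Hankel transform is then localized by the Paley--Wiener theorem Lemma~\ref{TPWS}.

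\textbf{Step 1: the entire function.} Lemma~\ref{PSHBM1} applied to $\omega$ yields a plurisubharmonic $u$ with $u\le\omega$ on $\mathbb{R}^d$, satisfying \eqref{LE34}--\eqref{LE36} with $\rho\le C_d\max(C_{\mathrm{reg}},C_{\mathrm{gr}})$. Lemma~\ref{ABM1} then gives an entire $F$ with the following properties: $|F(x+iy)|\le Ae^{2\rho|y|}$, $|F|\ge \frac12$ on $B_{r_{\min}}$, $|F(x)|\le Ce^{u(x)}\le Ce^{\omega(x)}$, and $F\in L^2(\mathbb{R}^d)$.

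\textbf{Step 2: matching the type to $\sigma$.} The type $2\rho$ need not match $\sigma$, so we rescale. Replace $\omega$ by $\omega_\lambda(x)=\omega(x/\lambda)$ (after first adjusting the cutoff radius in \eqref{LE31}). For $\lambda\ge1$ the constant $C_{\mathrm{reg}}$ does not get worse, and $G^*$ rescales so that $\int G^*_\lambda(r)(1+r^2)^{-1}dr$ becomes small, which lowers $\rho$. Alternatively, take $F_\lambda(x)=F(x/\lambda)$. This has type $2\rho/\lambda$, satisfies $|F_\lambda|\le Ce^{\omega(x/\lambda)}$, and the lower bound holds on $B_{\lambda r_{\min}}$. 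Since $\omega(x/\lambda)$ is not $\le\omega(x)$ in general, the clean route is the first one: apply the lemmas directly to the weight $\omega$ but with the constants tracked. The free parameter $\epsilon$ absorbs the loss, and the type bound $\|\operatorname{Im} z\|_1$ is compared to $|y|$ via $|y|\le\|y\|_1$.

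\textbf{Step 3: evenness and reality.} Set
\[
\Phi(z)=\frac{1}{2^d}\sum_{\varepsilon\in\{\pm1\}^d} F(\varepsilon_1z_1,\dots,\varepsilon_dz_d).
\]
Since $\omega$ is even in each variable (as assumed), $\Phi$ inherits both bounds and is even in each $z_i$. The lower bound is the delicate part, because averaging can cancel. To avoid cancellation, I would first replace $F$ by $F\cdot\overline{F(\bar z)}$. This product is nonnegative on $\mathbb{R}^d$, has type $4\rho$, satisfies $|\cdot|\le C^2e^{2\omega}\le C^2e^{\omega}$ (since $\omega\le 0$), and is $\ge \frac14$ near the origin. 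Averaging nonnegative functions over reflections preserves the lower bound, because each reflected point stays in the ball. The constant $\frac12$ is then recovered by normalizing, at the cost of adjusting $C$.

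\textbf{Step 4: identifying $f_0$.} Define $f_0:=\Phi|_{\mathbb{R}^d_+}$. To invoke Lemma~\ref{TPWS} we need $f_0\in L^2_\nu$, while so far we only have $\Phi\in L^2(dx)$. The weight $x^{2\nu+1}$ grows, but $|\Phi|\le Ce^{\omega}$ alone does not give decay. I would therefore multiply by $\prod_i\big(\sin(\epsilon' z_i/N)/(\epsilon' z_i/N)\big)^{N}$ with $N>2|\nu|+d+1$. This factor is even, at most $1$ on $\mathbb{R}^d$, of type $\epsilon'$, and $\ge$ a fixed positive number on a smaller ball. It restores $L^2_\nu$-integrability at a cost of only $\epsilon$ in the type and a shrunken $r_{\min}$.

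By Hankel inversion ($\mathcal{H}_\nu^{-1}=\mathcal{H}_\nu$), we have $f_0=\mathcal{H}_\nu(\mathcal{H}_\nu f_0)$. Lemma~\ref{TPWS} applied to $g=\mathcal{H}_\nu f_0$, whose transform $f_0$ is even and entire with growth $e^{(\sigma+\epsilon)\|\operatorname{Im} z\|_1}$, then yields $\operatorname{supp}\mathcal{H}_\nu f_0\subset[0,\sigma+\epsilon]^d$.

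\textbf{Main obstacle.} The hardest part is Step 2, reconciling the type $2\rho$, which is dictated by $C_{\mathrm{gr}}$, with an arbitrary prescribed $\sigma$. I expect to need the scaling $\omega\mapsto\omega(\cdot/\lambda)$ together with a statement that $\omega(\cdot/\lambda)$ is admissible whenever $\omega$ is. If that fails, the statement should be read with $\sigma$ comparable to $\rho$, i.e.\ $\sigma\ge C_d\max(C_{\mathrm{reg}},C_{\mathrm{gr}})$.
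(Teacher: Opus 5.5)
Your proposal follows the paper's proof step for step. Both use Lemmas~\ref{PSHBM1} and \ref{ABM1}, averaging over coordinate reflections, multiplication by an even entire factor of small type with polynomial decay to land in $L^2_\nu$, and Lemma~\ref{TPWS} applied to $\mathcal{H}_\nu f_0$ via $\mathcal{H}_\nu^{-1}=\mathcal{H}_\nu$.

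The only real difference is how you avoid cancellation in the reflection average. The paper averages $F$ itself, notes that the average equals $F(0)$ at the origin, and gets a lower bound on a smaller ball from a Cauchy/Bernstein estimate. You average $F(z)\overline{F(\bar z)}$, which equals $|F|^2\ge 0$ on $\mathbb{R}^d$, so the bound $\frac14$ survives on all of $B_{r_{\min}}$ with no derivative estimate. Your device doubles the type. Running it with $\omega/2$ in place of $\omega$ fixes this: both constants halve, $|F|^2\le C^2e^{\omega}$, and you recover the paper's type bound $2C_d\max(C_{\mathrm{reg}},C_{\mathrm{gr}})$. Both arguments also need $\omega$ even in each coordinate for the averaged function to keep the majorant $e^{\omega}$. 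If only $\omega(-x)=\omega(x)$ is meant, use $\sum_{\varepsilon\in\{\pm1\}^d}\omega(\varepsilon\,\cdot)\le\omega$ instead, at the cost of a factor $2^d$ in the constants.

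Your Step~2, however, does not work and should be dropped. Both $F(\cdot/\lambda)$ and the weight $\omega(\cdot/\lambda)$ only give the majorant $e^{\omega(x/\lambda)}$, which is not $\lesssim e^{\omega(x)}$. The parameter $\epsilon$ cannot absorb a type deficit of order $\rho$. The paper does not resolve this either; it simply asserts that the lemmas give `the prescribed exponential type'.

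In fact, the claim for every $\sigma>0$ with $C=C(d,C_{\mathrm{reg}},C_{\mathrm{gr}})$ is false. Suppose $|f_0(0)|\ge\frac12$, $|f_0|\le C$ on $\mathbb{R}^d$, and $f_0$ has type $s=\sigma+\epsilon$ in the sense of \eqref{growth_cond}. Bernstein's inequality along lines then gives $|f_0(x)|\ge\frac14$ for $|x|\le(4\sqrt{d}\,Cs)^{-1}$. But an admissible even $\omega$ with the same $C_{\mathrm{reg}},C_{\mathrm{gr}}$ can equal $-\log(4C)-1$ at a point of norm $R$. A smooth dip of that depth and width $\sim R^{2/3}$, placed at distance $R\gg1$, satisfies \eqref{LE32} and has growth integral $O(R^{-4/3})$. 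This contradicts $|f_0|\le Ce^{\omega}$ once $s<(4\sqrt{d}\,CR)^{-1}$.

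So your fallback reading is the right one: $\sigma$ at least the type produced by Lemma~\ref{ABM1}, e.g.\ $\sigma\ge 2C_d\max(C_{\mathrm{reg}},C_{\mathrm{gr}})$. Under that reading your argument is complete. Arbitrarily small $\sigma$ would require $C$ to depend on $\sigma$ and $\omega$, plus an extra Beurling--Malliavin-type input not contained in the two lemmas as quoted.
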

\begin{proof}
Apply Lemmas \ref{PSHBM1} and \ref{ABM1} to obtain an entire function f of the prescribed exponential type satisfying the required majorization. Averaging f over coordinate reflections, we may assume that it is even in each variable. Since the resulting function is nonzero at the origin, the Cauchy estimate yields a uniform lower bound on a smaller ball. Multiplying by an even entire function of arbitrarily small exponential type and sufficiently rapid polynomial decay, if necessary, ensures membership in  $L_\nu^2(\mathbb{R}_+^d)$. The conclusion now follows from the Paley–Wiener theorem, Lemma \ref{TPWS}.
\end{proof}
\subsection{Unique Continuation for $L^2$ Functions with Spectral Support in Regular Sets}
In this subsection, we will use the previous Beurling–Malliavin-type theorem to establish a unique continuation result for $L^2$ functions with spectral support in regular sets. To this end, following \cite{BJ}, we introduce the following definition of damping function.
\begin{definition}\label{def:damping}
    Let $Y \subset \mathbb{R}_+^d$, let $W$ be a unique continuation weight, and let $c_1, c_2 \in (0,1]$. We say that $Y$ admits a $(c_1,c_2,W)$-damping function if there exists $\psi \in L^2_\nu(\mathbb{R}_+^d)$ satisfying
    \begin{align*}
        &\operatorname{supp} \psi \subset B_{+}(c_1), \\
        &|\mathcal{H}_{\nu} \psi(\xi)| \ge c_2 && \text{for } \xi \in B_{+}( c_2), \\
        &|\mathcal{H}_{\nu} \psi(\xi)| \le \langle \xi  \rangle^{-d} && \text{for all } \xi \in \mathbb{R}_+^d, \\
        &|\mathcal{H}_{\nu} \psi(\xi)| \le 1/W(|\xi|) && \text{for all } \xi \in Y.
    \end{align*}
\end{definition}
Inspired by \cite{BD,CA}, we can construct damping functions for regular sets using the Beurling–Malliavin type theorem for the Fourier–Bessel transform. From now on, we focus on one dimension.
\begin{lemma}\label{lem:damping}
    Let $Y \subset [0,h^{-1}]$ be $\delta$-regular with constant $C_R$ on scales \(1\) to \(h^{-1}\). For any $c_1>0$, $Y$ admits a $(c_1, c_2, W)$ damping function where $c_2=c_2(c_1,\delta, C_R,\nu)$ and 
    \[W(t)=\exp \left( \frac{c_3t}{(\log (2+t))^{\alpha}}\right)\]
    for some $\alpha(\delta, C_R)\in (0,1)$ and $c_3(c_1, \delta, C_R,\nu)>0$. 
\end{lemma}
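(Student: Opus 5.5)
We follow the scheme of Bourgain--Dyatlov \cite{BD} and Cohen \cite{CA}. First we build a weight $\omega\le 0$ adapted to $Y$, then use Theorem~\ref{L330} to get a band-limited multiplier dominated by $e^{\omega}$, and finally convert it into a damping function by a Fourier--Bessel duality.

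\textbf{Step 1 (the weight).} Extend $Y$ evenly to $\tilde Y=Y\cup(-Y)\subset[-h^{-1},h^{-1}]$. The set $\tilde Y$ is still $\delta$-regular on scales $1$ to $h^{-1}$, with constant depending only on $C_R$. Following \cite[\S 3]{BD}, put
\[
\omega_0(\xi)=-c\,\frac{\langle\xi\rangle}{(\log(2+|\xi|))^{\alpha}}\,\mathbf 1_{\tilde Y_1}(\xi),
\]
where $\tilde Y_1$ is the $1$-neighbourhood of $\tilde Y$. We then smooth $\omega_0$ at scale comparable to $\langle\xi\rangle$ on dyadic shells, but only after replacing it with the "Beurling--Malliavin regularization" built from the porosity of $\tilde Y$. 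Concretely, at each dyadic scale $2^k$ the set $\tilde Y$ misses intervals of length $\gtrsim 2^{k}\cdot$(porosity constant). A weight of size $-\langle\xi\rangle(\log)^{-\alpha}$ therefore averages on the shell $|\xi|\sim 2^k$ to roughly $-2^k k^{-\alpha}$, and $\alpha>0$ is chosen via \cite[Lemma 2.11]{BD} so that $\int G^*(r)/(1+r^2)\,dr\lesssim\sum_k k^{-\alpha}\cdot(\text{decay from regularity})<\infty$. The regularity condition is what makes this sum converge.

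With this construction, \eqref{LE31}, \eqref{LE32} and \eqref{LE33} hold with $C_{\mathrm{reg}},C_{\mathrm{gr}}$ depending only on $\delta,C_R$; to get \eqref{LE31} we set $\omega=0$ on $|\xi|\le 2$. We multiply $\omega$ by a small constant $c_3$, which we may choose in terms of $c_1$, so that the resulting $\rho$ from Lemma~\ref{PSHBM1} satisfies $2\rho\le c_1/2$. The weight is even by construction.

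\textbf{Step 2 (the multiplier).} Apply Theorem~\ref{L330} with $d=1$ and $\sigma+\epsilon\le c_1$. It produces $F=f_0$, with the roles of $x$ and $\xi$ swapped, such that:
\[
\operatorname{supp}\mathcal H_\nu F\subset[0,c_1],\qquad |F|\ge\tfrac12 \text{ on } [0,r_{\min}],\qquad |F(\xi)|\le Ce^{\omega(\xi)}.
\]
Set $\psi=\mathcal H_\nu F$. Since $\mathcal H_\nu$ is an involution, $\mathcal H_\nu\psi=F$, so $\psi$ has support in $B_+(c_1)$.

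The bound $|\mathcal H_\nu\psi|\le 1/W$ on $Y$ follows from $e^{\omega}\le e^{-c_3\langle\xi\rangle/\log^\alpha}$ on $Y$. The remaining damping conditions are $|\mathcal H_\nu\psi|\le\langle\xi\rangle^{-1}$ everywhere and the normalization. For these we multiply $F$ by an even entire function of small exponential type with polynomial decay, for instance $(\sin(\eta\xi)/(\eta\xi))^{2}$ with small $\eta$. This keeps the support inside $B_+(c_1)$ by Lemma~\ref{TPWS} and Lemma~\ref{L15}, keeps the lower bound near $0$, and gives $\langle\xi\rangle^{-1}$ decay. Dividing by the constant $C$ then fixes the normalization. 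Take $c_2=\min(r_{\min},1/(2C))$; it depends only on $c_1,\delta,C_R,\nu$.

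\textbf{Main obstacle.} The hard step is Step 1: verifying the growth condition \eqref{LE33} uniformly in $h$ with an explicit $\alpha(\delta,C_R)\in(0,1)$, while also keeping the derivative bounds \eqref{LE32}. The natural weight $-|\xi|\mathbf 1_{Y}$ is far too rough for this. The approach we would try first is to import the construction in \cite[Lemma 2.11 and \S3]{BD}, where regularity makes the "holes" of $Y$ occupy a fixed proportion of every dyadic shell. We then check that the Cohen-type smoothing at scale $\sim\langle\xi\rangle/\log^{\alpha}$ preserves the bound $\le -c|\xi|/\log^\alpha$ on $Y$ itself, up to constants. Independence of $h$ holds because every estimate is scale-by-scale and uses only the regularity constants.
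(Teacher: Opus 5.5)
Your architecture matches the route the paper intends: an even weight adapted to $\tilde Y=Y\cup(-Y)$, then Theorem~\ref{L330} to produce $F$, then $\psi=\mathcal H_\nu F$ via the involution, a small-type sinc factor and a normalization. The paper's proof is only a pointer to Bourgain--Dyatlov and Cohen, and your Step~2 is sound. The gap is Step~1, the only step with real content, where each mechanism you describe fails a different hypothesis.

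\textbf{Smoothing at scale $\langle\xi\rangle$.} Suppose you obtain \eqref{LE32} by smoothing $\omega_0$ at scale $\sim\langle\xi\rangle$. At a point of $Y$ with $|\xi|\sim 2^k$ you then see an average of $\omega_0$ over a window of length $\sim 2^k$. Regularity gives $|\tilde Y_1\cap\text{window}|\lesssim 2^{k\delta}$, so the smoothed weight there has size only $\lesssim 2^{k\delta}k^{-\alpha}\ll 2^kk^{-\alpha}$. The bound $|\mathcal H_\nu\psi|\le 1/W$ on $Y$ is lost.

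\textbf{Size $2^kk^{-\alpha}$ on each shell.} Suppose instead the weight really has size $2^kk^{-\alpha}$ on a fixed fraction of each shell, as your ``averages to roughly $-2^kk^{-\alpha}$'' says. Then $\int_0^\infty G^*(r)(1+r^2)^{-1}\,dr\gtrsim\sum_{k\le\log_2 h^{-1}}k^{-\alpha}$. Since $\alpha<1$, this is unbounded as $h\to0$, so \eqref{LE33} fails uniformly in $h$.

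\textbf{Tents at scale $\langle\xi\rangle(\log\langle\xi\rangle)^{-\alpha}$.} Your fallback in the last paragraph uses tents of depth and width $\sim\langle\xi\rangle(\log\langle\xi\rangle)^{-\alpha}$. This is the scale suited to a Lipschitz-only multiplier theorem, and it gives the Lipschitz bound and, for $\alpha>1/(2-\delta)$, the growth bound. But it has $|\omega''|\sim(\log\langle\xi\rangle)^{\alpha}\langle\xi\rangle^{-1}$ and $|\omega'''|\sim(\log\langle\xi\rangle)^{2\alpha}\langle\xi\rangle^{-2}$. So $C_{\mathrm{reg}}$ is not uniform, and Lemma~\ref{PSHBM1}, hence Theorem~\ref{L330}, does not apply.

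\textbf{The missing idea.} You need an intermediate bump width, dictated by the third-derivative bound, with regularity paying for the extra width.
On the shell $|\xi|\in[2^k,2^{k+1}]$ do the following:
\begin{itemize}
\item Set $D_k=c\,2^kk^{-\alpha}$ and $\ell_k=2^kk^{-\alpha/3}$.
\item Take a maximal $\ell_k$-separated set $\{y_j\}\subset\tilde Y$ in the shell.
\item Put $\omega=-\sum_j D_k\,\chi((\xi-y_j)/\ell_k)$, where $\chi$ is a fixed smooth bump equal to $1$ on $[-1,1]$ and supported in $[-2,2]$.
\end{itemize}
By maximality, $\omega\le -D_k\approx -c|\xi|(\log(2+|\xi|))^{-\alpha}$ on $Y$. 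The bumps overlap boundedly, and $|\omega'|\lesssim k^{-2\alpha/3}$, $|\omega''|\lesssim 2^{-k}k^{-\alpha/3}$, $|\omega'''|\lesssim 2^{-2k}$, so \eqref{LE32} holds uniformly. The upper and lower mass bounds for $\mu_Y$ give at most $\lesssim C_R^2(2^k/\ell_k)^{\delta}=C_R^2k^{\alpha\delta/3}$ bumps. Hence the shell contributes
\[
\lesssim 2^{-2k}\cdot C_R^2k^{\alpha\delta/3}\cdot\ell_k\cdot D_k\approx C_R^2\,k^{-\alpha(4-\delta)/3}
\]
to $\int|\omega|(1+\xi^2)^{-1}d\xi$, which in one dimension controls the left side of \eqref{LE33}. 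This is summable uniformly in $h$ once $\alpha\in(3/(4-\delta),1)$, an exponent depending only on $\delta$.

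Finally, choose the $y_j$ symmetrically so that $\omega$ is even, and drop the bumps on $|\xi|\le 2$. After that, your rescaling by $c_3$ to get $2\rho\le c_1/2$ and your Step~2 go through as written.
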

\begin{proof}
Following a construction similar to the Bourgain and Dyatlov's constructions, we can obtain such a damping function, we refer readers to \cite[Lemma 3.1]{BD} or \cite[Proposition 3.5]{CA1} for details.
\end{proof}
\begin{remark}\label{rem:centered-damping}
For each $\eta\geq0$, a damping function for $Y$ centered at
$\eta$ is obtained by applying the preceding construction to
the translated set $Y-\eta$ and its reflection. Translation and reflection preserve the relevant regularity constant, so the construction is uniform in \(\eta\). The resulting even entire multiplier satisfies the required estimates in terms of \(|\xi-\eta|\), and the Fourier--Bessel Paley--Wiener theorem yields the corresponding function \(\psi_\eta\). Thus, no translation invariance of the Fourier--Bessel transform is used.
\end{remark}

We are now ready to state the main result of this subsection.
\begin{proposition}\label{proposition4.1}
Let \(Y\subset[a, a+h^{-1}]\) be $\delta$-regular with constant $C_R$ on scales \(1\) to \(h^{-1}\). Take
\begin{equation}
\mathcal{I}:=\{[j,j+1]|j\in \mathbb{Z}\},
\end{equation}
and assume that for each $I\in \mathcal{I}$, we are given a subinterval $I'\subset I$ with $|I'|=\lambda>0$ independent of $I$.
Define 
\[S=\cup_{I\in \mathcal{I}}I'.\]
Then, there exists a positive constant $a_0(\delta, C_R, \lambda, \nu )$ such that for all $a\geq a_0$ and for any \(f\in L^2_\nu(\mathbb{R}_+)\) with \(\operatorname{supp} \mathcal{H}_{\nu} f\subset Y\), we have
\[
\|f\|_{L^2_\nu}\le C(\lambda,\delta, C_R, \nu)\,\|f\mathbf{1}_S\|_{L^2_\nu}.
\]
\end{proposition}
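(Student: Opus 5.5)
Following \cite{BD,BJ}, the proof uses a damping multiplier built from the regular set $Y$ and Bessel asymptotics in the high-frequency regime. The loss of translation invariance is handled by Remark~\ref{rem:centered-damping}, which supplies damping functions centred at arbitrary $\eta\ge 0$.

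\textbf{Step 1: reduce to a Fourier problem.} Use the large-argument asymptotics
\[
j_\nu(t)=\sqrt{2/\pi}\,t^{-\nu-1/2}\bigl(\cos(t-\tfrac{\pi}{2}\nu-\tfrac{\pi}{4})+O(t^{-1})\bigr).
\]
Write $f(x)=\int_Y j_\nu(x\xi)F(\xi)\,d\mu_\nu(\xi)$ with $F=\mathcal{H}_\nu f$. Set $\tilde f(x)=x^{\nu+1/2}f(x)$ and $G(\xi)=\xi^{\nu+1/2}F(\xi)$; both maps are isometries $L^2_\nu\to L^2(\mathbb{R}_+)$.

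For $x\ge x_0$ one has $x\xi\ge x_0a$ on $Y$. Then
\[
\tilde f(x)=\sqrt{2/\pi}\int_Y \cos\!\bigl(x\xi-\theta_\nu\bigr)\,G(\xi)\,d\xi+R(x),
\]
where $R$ is an oscillatory remainder with kernel $O((x\xi)^{-1})$.

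The main term is a combination of $e^{\pm i x\xi}$ against $G$, and $G$ is supported in $Y\subset[a,a+h^{-1}]$. I will bound $R$ in $L^2$ by $C(x_0a)^{-1}\|G\|_{L^2}$ using a Schur/Hilbert–Schmidt type estimate on the operator with kernel $(x\xi)^{-1/2}\cdot O((x\xi)^{-1})$ restricted to $x\ge x_0$, $\xi\ge a$. This makes $R$ negligible once $a\ge a_0$.

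\textbf{Step 2: damping and the classical argument.} The main term is (up to modulation) $g_\pm=\mathscr{F}^{-1}(G\mathbf 1_Y)$ reflected. The set $Y$ is $\delta$-regular on scales $1$ to $h^{-1}$, and translation and reflection preserve this. So Lemma~\ref{lem:damping}, in the Fourier version of \cite{BD,CA1}, provides $\psi$ with $\operatorname{supp}\psi\subset[-c_1,c_1]$ and $|\hat\psi|\ge c_2$ near $0$. It also gives $|\hat\psi(\xi)|\le W(|\xi-\eta|)^{-1}$ on $Y-\eta$, uniformly in the centre $\eta\in Y$.

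Then I run the argument of \cite[Prop.~3.3]{BD}/\cite{BJ} on each exponential piece:
\begin{enumerate}
\item Cover $Y$ by unit intervals centred at $\eta_k$.
\item Convolve $g$ with $\psi_{\eta_k}=e^{i\eta_k x}\psi$.
\item Use the decay of $\hat\psi$ on $Y$ to show that $g$ is quasi-analytic at unit scale after localisation (Lemma~\ref{LQA}).
\item Conclude that a fixed proportion of the $L^2$ mass of $g$ on each unit interval $I$ lies on $I'$, with constant depending only on $\lambda,\delta,C_R$.
\end{enumerate}
Since the damping is uniform in $\eta$, no dependence on $a$ enters. Summing over $I$ and using Plancherel gives
\[
\|g\|_{L^2}\le C\|g\mathbf 1_S\|_{L^2}.
\]

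\textbf{Step 3: recombine and absorb errors.} The $\pm$ exponentials combine into the cosine. Since $G$ lives at frequencies $\ge a\gg 1$, the two pieces are almost orthogonal on intervals of length $\lambda$, with cross terms $O(1/(a\lambda))$. For the region $x<x_0$ I would argue that its mass is small, as in Lemma~\ref{L33}, via $|f(x)|\le\|F\|_{L^1_\nu}$-type bounds weighted by $x^{2\nu+1}$. If this fails to be uniform in $a$, I would instead choose $x_0\sim a^{-1/2}$ and keep the $O((x\xi)^{-1})$ error uniform.

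Combining Steps 1–3 gives $\|f\|\le C\|f\mathbf 1_S\|+\epsilon(a)\|f\|$ with $\epsilon(a)\to 0$ as $a\to\infty$. Choosing $a_0$ so that $\epsilon(a_0)\le 1/2$ absorbs the error term.

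\textbf{Main obstacle.} The hard step is making the remainder and near-origin contributions small uniformly in $h$, i.e. bounded by $\epsilon(a)$ independently of the length $h^{-1}$ of $Y$. I would first try the weighted Schur test on the asymptotic-remainder kernel, exploiting $\xi\ge a$.
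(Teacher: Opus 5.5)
\paragraph{There is a genuine gap, and it is the one you flagged.} None of your error bounds is uniform in $h$, and the tools you propose cannot make them so.

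Your Hilbert--Schmidt bound for the asymptotic remainder is fine on $\{x\ge x_0\}$. Indeed $\int_{x_0}^\infty\int_a^\infty(x\xi)^{-2}\,d\xi\,dx=(x_0a)^{-1}$ gives $\|R\|\lesssim (x_0a)^{-1/2}\|G\|_{L^2}$ independently of $h$. But this forces $x_0a\to\infty$, and then the near-origin piece cannot be controlled using only $\operatorname{supp}G\subset[a,\infty)$. If $\mathcal K$ denotes the unitary transform with kernel $\sqrt{x\xi}J_\nu(x\xi)$, scaling gives $\|\mathbf 1_{[0,x_0]}\mathcal K\mathbf 1_{[a,\infty)}\|=\|\mathbf 1_{[0,x_0a]}\mathcal K\mathbf 1_{[1,\infty)}\|\to 1$.

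The $L^1$ bound you borrow from Lemma~\ref{L33} rests on the weighted hypothesis \eqref{E431}, which $f$ does not satisfy with $h$-independent constants. Without it you only have $\|G\|_{L^1}\le |Y|^{1/2}\|G\|_{L^2}$. So the mass on $[0,x_0]$ is $\lesssim x_0|Y|\,\|G\|^2$, where $|Y|$ can be of size $h^{-\delta}$, and taking $x_0\sim a^{-1/2}$ does not remove this.

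The recombination step has the same defect. Write $\check G(x)\overline{\check G(-x)}=e^{2iax}P(x)$, where $P$ carries frequencies up to $2h^{-1}$. Integrating by parts on each $I'$ then only yields cross terms of size $O(h^{-1}/a)\|G\|^2$, not $O(1/(a\lambda))$.

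Your estimates do close if one assumes $a\ge a_0h^{-1}$. Take $x_0=\epsilon h$: the near-origin mass is $\lesssim\epsilon$, the remainder is $\lesssim(\epsilon a_0)^{-1/2}$, and the cross terms are $\lesssim a_0^{-1}$. This is essentially the regime in which Lemma~\ref{L544} invokes the proposition after rescaling. But the statement asks for $a_0$ independent of $h$.

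\paragraph{The missing idea is to localize the Hankel spectrum \emph{before} using Bessel asymptotics.} The paper proceeds as follows.
\begin{enumerate}
\item Take $\eta\in c_2\mathbb N$ and the centred damping functions $\psi_\eta$ of Remark~\ref{rem:centered-damping}, with $\operatorname{supp}\psi_\eta\subset[0,\lambda/4]$. Set $f_\eta=f*_\nu\psi_\eta$, so that $\sum_\eta\|f_\eta\|^2\ge c_2^2\|f\|^2$.
\item Discard the bad $\eta$ using $|\mathcal H_\nu\psi_\eta|\le 1/W(|\cdot-\eta|)$ on $Y$.
\item For good $\eta$ one has $\|W(|\eta-\cdot|)\mathcal H_\nu f_\eta\|\le A\|f_\eta\|$. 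Cauchy--Schwarz then gives an $L^1$ bound on $\xi^{\nu+1/2}\mathcal H_\nu f_\eta$ that is independent of $h$, $a$ and $\eta$.
\item This is exactly what Lemma~\ref{L5.2} needs. The near-origin and remainder terms become $O(\varepsilon)$ and $O((\varepsilon a^2)^{-1})$, and are absorbed with $\varepsilon=1/a$ once $a\ge a_0$. The main term has Fourier support in $Y\cup(-Y)$ and is handled in one piece by Proposition~\ref{propositionBD}, so no cross terms arise.
\item The support property of the Hankel translation (as in Lemma~\ref{L15}) gives $f_\eta\mathbf 1_E=((f\mathbf 1_S)*_\nu\psi_\eta)\mathbf 1_E$. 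Summing over good $\eta$ with $|\mathcal H_\nu\psi_\eta(\xi)|\le\langle\xi-\eta\rangle^{-1}$ recovers $\|f\mathbf 1_S\|$.
\end{enumerate}
In your proposal the centred damping functions act only on the Fourier side, after the reduction. That is too late to supply the $h$-independent $L^1$ control.
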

To prove this proposition, we first recall a variant of \cite[Proposition 3.3]{BD}.
\begin{proposition}\label{propositionBD}
For every $a\geq 0$, assume that $Y=(Y_0+a)\cup (-Y_0-a)$ with $Y_0\subset [0,h^{-1}]$ is $\delta$-regular with constant $C_R$ on scales \(1\) to \(h^{-1}\). Take
\begin{equation}
\mathcal{I}:=\{[j,j+1]|j\in \mathbb{Z}\},
\end{equation}
and assume that for each $I\in \mathcal{I}$, we are given a subinterval $I'\subset I$ with $|I'|=c_0>0$  independent of $I$. Define 
\[S=\cup_{I\in \mathcal{I}}I'.\]
Then there exists $c_1 > 0$ depending only on $\delta$, $C_R$ and $c_0$ such that for all $f\in L^2(\mathbb{R})$ with $\supp \hat{f}\subset Y$, we have 
\begin{equation}
\lVert f\rVert_{L^2(S)}\geq c_1\lVert f\rVert_{L^2(\mathbb{R})}.
\end{equation}
\end{proposition}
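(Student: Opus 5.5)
We follow the argument of \cite[Proposition 3.3]{BD}, with the damping function taken centered at the translate $a$ rather than at the origin. The set $Y=(Y_0+a)\cup(-Y_0-a)$ is the union of two translates of a $\delta$-regular set. Set $Y_1:=Y_0+a$ and $Y_2:=-Y_0-a$.

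First we build a multiplier adapted to $Y$. Lemma~\ref{lem:damping}, in its Fourier version \cite[Lemma 3.1]{BD}, applied to $Y_0$ gives $\psi_0$ with the following properties: $\operatorname{supp}\psi_0\subset[-c_0/10,c_0/10]$, $|\hat\psi_0|\ge c_2$ on $[-c_2,c_2]$, $|\hat\psi_0(\xi)|\le\langle\xi\rangle^{-1}$, and $|\hat\psi_0|\le 1/W$ on $Y_0$ (extended evenly, so also on $-Y_0$), with $W(t)=\exp(c_3t/\log^\alpha(2+t))$. Regularity is invariant under translation, so modulating by $e^{\pm 2\pi i a x}$ recentres this multiplier at $\pm a$. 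The constants stay uniform in $a$, because modulation does not change $|\psi|$ or its support. For $\eta\in\mathbb R$, put $\psi_\eta(x)=e^{2\pi i\eta x}\psi_0(x)$ and suppose first that $\eta\in Y_0+a$. The function $\hat\psi_\eta(\xi)=\hat\psi_0(\xi-\eta)$ decays like $1/W(|\xi-\eta|)$ on $Y_1$, since $(Y_1-\eta)\subset Y_0-Y_0$ and we use the version of the damping construction valid on the difference set as in \cite{BD}. On $Y_2$ it is bounded by $\langle\xi-\eta\rangle^{-1}$. Both bounds are independent of $a$.

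Next comes the frequency localisation, carried out as in \cite{BD}. Fix $f$ with $\operatorname{supp}\hat f\subset Y$ and split $f=f_1+f_2$ with $\operatorname{supp}\hat f_j\subset Y_j$. Because the $Y_j$ are disjoint, $\|f\|^2=\|f_1\|^2+\|f_2\|^2$ by Plancherel. For each $\eta\in\mathbb Z$ convolve $f$ with $\psi_\eta$. Writing $\hat f\,\hat\psi_0(\cdot-\eta)$ and summing over $\eta$ gives the identity $\sum_\eta|\hat\psi_0(\xi-\eta)|^2\ge c>0$, from the lower bound near $0$ and a partition of unity over unit frequency windows. Hence $\|f\|_{L^2}^2\lesssim\sum_\eta\|f*\psi_\eta\|^2$. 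For each $\eta$, the function $f*\psi_\eta$ has Fourier transform concentrated within $O(1)$ of $\eta$ and damped like $1/W(|\xi-\eta|)$ on the part of $Y$ near $\eta$. The multiplier is compactly supported, so Lemma~\ref{L15} in its Euclidean form shows that the physical support of $f\mathbf 1_{S}$ is moved by at most $c_0/10$.

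The core step is the quantitative unique continuation for each piece. The function $g_\eta:=e^{-2\pi i\eta x}(f*\psi_\eta)$ has $\hat g_\eta$ satisfying $\|\hat g_\eta W\|_{L^2}\le A\|g_\eta\|$, apart from an error coming from the portion of $Y$ far from $\eta$. That error is controlled by the $\langle\cdot\rangle^{-1}$ tail and by orthogonality in $\eta$. The weight $W$ is a unique continuation weight (it satisfies $\int\log W/(1+t^2)=\infty$), so Theorem~\ref{Th1.2} with $d=1$, $\nu=-1/2$, in the Euclidean version of \cite{BJ}, gives $\|g_\eta\|\le C\|g_\eta\mathbf 1_{S'}\|$. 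Here $S'$ is $S$ shrunk by $c_0/10$ on each side, which is still relatively dense. Since $|g_\eta|=|f*\psi_\eta|$, summing over $\eta$ and using $\operatorname{supp}\psi_\eta\subset[-c_0/10,c_0/10]$ gives $\sum_\eta\|(f*\psi_\eta)\mathbf 1_{S'}\|^2\lesssim\|f\mathbf 1_S\|^2$. This follows from Young's inequality together with almost orthogonality in $\eta$: the $\hat\psi_\eta$ have summable overlap, so the corresponding operators are bounded by a Cotlar--Stein-type argument.

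The main obstacle is the cross-interaction between $Y_1$ and $Y_2$ and the tail errors. The damping estimate only holds on the translate of $Y$ near $\eta$, and the tail of $\hat\psi_0$ on the far component must be absorbed. To handle this I would use that $\hat\psi_0$ is even, so the damping bound holds on both $\pm Y_0$ at once. Then $\hat\psi_0(\xi-\eta)$ for $\xi\in Y_2$, $\eta\in Y_1$, with $|\xi-\eta|\ge 2a$, is bounded by $\langle 2a\rangle^{-1}$. The resulting error is summable with constant independent of $a$, after increasing the decay order of $\psi_0$ (replacing $\psi_0$ by $\psi_0*\psi_0$ if needed). The constant $c_1$ then depends only on $\delta,C_R,c_0$.
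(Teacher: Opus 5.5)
Your overall route, rerunning Bourgain--Dyatlov's Proposition~3.3, is the one the paper invokes. However, the multiplier construction has a genuine gap.

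\textbf{The single modulated multiplier.} You take one $\psi_0$ and set $\hat\psi_\eta=\hat\psi_0(\cdot-\eta)$ for all $\eta$. This requires $|\hat\psi_0|\le 1/W$ on $\bigcup_\eta(Y-\eta)\supset Y_0-Y_0$. There is no ``difference-set version'' of the damping lemma in Bourgain--Dyatlov, and none can hold with constants uniform in $h$. For a discretized middle-third Cantor set one has $Y_0-Y_0=[-h^{-1},h^{-1}]$, so $\int_{\mathbb{R}}\frac{\log|\hat\psi_0(t)|}{1+t^2}\,dt\lesssim -c_3(\log h^{-1})^{1-\alpha}\to-\infty$. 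Since $\hat\psi_0$ is entire of fixed exponential type and bounded by $1$ on $\mathbb{R}$, the Poisson--Jensen bound in the upper half-plane (plus Bernstein's inequality near $0$) then forces $c_2\to0$. Bourgain--Dyatlov instead take, for each $\eta$, a separate damping function for the translated set $Y-\eta$ and modulate it; uniformity comes from the translation invariance of porosity.

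\textbf{The interaction between $Y_1$ and $Y_2$.} Your treatment has the same defect. The bound $\langle 2a\rangle^{-N}$ on the far component is not small when $a$ is bounded, and the statement is for all $a\ge0$. Moreover, polynomial decay is not a unique continuation weight, so it cannot be fed into the Bourgain--Jin theorem.

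\textbf{The missing observation.} This is what the paper's brief proof rests on: $Y=(Y_0+a)\cup(-Y_0-a)$ is itself porous on all scales from $C(\delta,C_R)$ to $\infty$, uniformly in $a$ and $h$. An interval meeting both pieces contains the gap $(-a,a)$. Either that gap is long enough, or it leaves a proportional part of the interval on one side, where porosity of $\pm(Y_0+a)$ applies. For lengths $\gtrsim h^{-1}$, use also $Y\subset[-a-h^{-1},-a]\cup[a,a+h^{-1}]$. Hence every $Y-\eta$ admits a damping function that damps on all of $Y$. Neither the splitting $f=f_1+f_2$ nor any tail estimate is then needed.

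\textbf{The unique continuation step} is also not justified as written. Damping only gives $\|W(|\cdot|)^{1/2}\hat g_\eta\|_{L^2}\le\|W(|\cdot-\eta|)^{-1/2}\hat f\|_{L^2}$. This is controlled by $\|f\|$, not by $\|g_\eta\|$, and an additive ``error'' cannot be inserted into the hypothesis $\|\hat g\,W\|\le A\|g\|$ of a unique continuation theorem. What is needed is the good/bad dichotomy with the half weight:
\begin{itemize}
\item call $\eta$ good if $\|W^{1/2}\hat g_\eta\|\le A\|g_\eta\|$;
\item then $\sum_{\eta\ \mathrm{bad}}\|g_\eta\|^2\le A^{-2}\sup_\xi\sum_\eta W(|\xi-\eta|)^{-1}\,\|f\|^2$, which is absorbed for $A$ large;
\item apply unique continuation with $W^{1/2}$ (still a unique continuation weight) only to good $\eta$.
\end{itemize}

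\textbf{Two smaller points.} The lattice should be $c_2\mathbb{Z}$ rather than $\mathbb{Z}$ to get $\sum_\eta|\hat\psi_\eta|^2\ge c_2^2$. The final summation needs only Plancherel and $\sum_\eta\langle\xi-\eta\rangle^{-2}\le C$, not Cotlar--Stein.
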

\begin{proof}
If $a=0$, this is precisely a corollary of Proposition~3.3 in \cite{BD}. 
For $a\neq 0$, Lemma~2.1 in \cite{BD} implies that for the chosen $a$ the sets $Y_0+a$ and $-Y_0-a$ remain $\delta$-porous from $1$ and $h^{-1}$. 
By slightly modifying the proof of Proposition~3.3 in \cite{BD} 
(or by applying the more direct argument given in Proposition~3.5 of \cite{CA1}), one obtains the desired result; we omit the details here.
\end{proof}
We use Proposition \ref{propositionBD} and the asymptotic expansion of Bessel functions to establish the following key lemma.
\begin{lemma}\label{L5.2}
Let \(Y \subset [a, a+h^{-1}] \subset \mathbb{R}_+\) be $\delta$-regular with constant $C_R$ on scales \(1\) to \(h^{-1}\). Take
\begin{equation}
\mathcal{I}:=\{[j,j+1]|j\in \mathbb{Z}\},
\end{equation}
and assume that for each $I\in \mathcal{I}$, we are given a subinterval $I'\subset I$ with $|I'|=\lambda>0$ independent of $I$.
Define 
\[S=\cup_{I\in \mathcal{I}}I'.\]
 Assume there exists a unique continuation weight \(W\) and a constant \(A > 0\) such that for some \(\eta \in \mathbb{R}_+\):
\begin{equation}
\lVert \mathcal{H}_{\nu}f W(|\eta - \cdot|) \rVert_{L^2_{\nu}(\mathbb{R}_+)} \le A \lVert f \rVert_{L^2_{\nu}(\mathbb{R}_+)}.
\end{equation}
Then there exists a positive constant \(C_0 = C_0(\delta, C_R, \lambda, W, A, \nu)>0\) such that for all \(a \ge C_0\) and all such functions \(f\) with \(\supp \mathcal{H}_{\nu}f \subset Y\), we have:
\begin{equation}
\|f\|_{L_\nu^2(S)} \ge C(\delta, C_R, \lambda) \|f\|_{L_\nu^2(\mathbb{R}_+)},
\end{equation}
where  $C(\delta, C_R, \lambda)>0$ depends only on $\delta$, $C_R$, $\lambda$.
\end{lemma}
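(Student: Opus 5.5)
Since $\operatorname{supp}\mathcal H_\nu f\subset Y$, I would write $g=\mathcal H_\nu f$ and $f(x)=\int_Y j_\nu(x\xi)g(\xi)\,\xi^{2\nu+1}\,d\xi$. The plan is to compare $x^{\nu+1/2}f(x)$ with an honest Fourier integral whose frequencies lie in $(Y)\cup(-Y)$, and then apply Proposition~\ref{propositionBD}. The model is the large-argument asymptotic
\[
J_\nu(t)=\sqrt{\tfrac{2}{\pi t}}\cos\bigl(t-\tfrac{\nu\pi}{2}-\tfrac{\pi}{4}\bigr)+R_\nu(t),\qquad |R_\nu(t)|\le C_\nu t^{-3/2}\quad (t\ge 1).
\]
Setting $G(\xi):=\xi^{\nu+1/2}g(\xi)$, which satisfies $\|G\|_{L^2(\mathbb R_+)}=\|g\|_{L^2_\nu}=\|f\|_{L^2_\nu}$, this gives for $x\xi\ge1$
\[
x^{\nu+1/2}f(x)=\sqrt{\tfrac{2}{\pi}}\int_Y \cos\bigl(x\xi-\phi_\nu\bigr)G(\xi)\,d\xi+E(x)=:F(x)+E(x).
\]
The main term $F$ is, up to constants, the inverse Fourier transform of $e^{-i\phi_\nu}G\mathbf 1_{Y}$ placed at $\xi/2\pi$ plus its reflected conjugate. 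So $\hat F$ is supported in a rescaled copy of $Y\cup(-Y)$. This is exactly the symmetric configuration $(Y_0+a)\cup(-Y_0-a)$ in Proposition~\ref{propositionBD}, after rescaling by $2\pi$, which changes only $C_R$ and the lengths of the $I'$.

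First I would handle the main term. Proposition~\ref{propositionBD}, applied to $F\in L^2(\mathbb R)$ (extended evenly, with $S$ reflected), gives $\|F\|_{L^2(S)}\ge c_1\|F\|_{L^2(\mathbb R)}$. The two frequency pieces sit at distance at least $2a$, so for $a$ large Plancherel gives $\|F\|_{L^2(\mathbb R_+)}\simeq\|G\|_{L^2}=\|f\|_{L^2_\nu}$.

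Second, I would bound the error $E$ on $S$ and on $\mathbb R_+$. There are two regimes.
\begin{itemize}
\item For $x\ge x_0$ with $x_0$ fixed: since $\xi\ge a$, we have $x\xi\ge x_0a$ large. The remainder kernel $K(x,\xi)=x^{\nu+1/2}\xi^{\nu+1/2}j_\nu(x\xi)-\sqrt{2/\pi}\cos(x\xi-\phi_\nu)$ has size $O((x\xi)^{-1})$. Its $L^2$ operator norm from $L^2(Y)$ to $L^2(x\ge x_0)$ should be $O((x_0 a)^{-1/2})$ or better. I would prove this by Schur's test, or Hilbert--Schmidt on dyadic blocks, possibly using the next term of the expansion, which carries an oscillation, to gain summability in $\xi$.
\item For $x\le x_0$: here I use the decay hypothesis $\|\mathcal H_\nu f\,W(|\eta-\cdot|)\|\le A\|f\|$. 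An argument as in Lemma~\ref{L33}, namely Cauchy--Schwarz with the weight plus the Riemann--Lebesgue bound, gives $\|f\|_{L^2_\nu([0,x_0])}\le C(A,W,\nu)\,x_0^{\nu+1}\|f\|$. This lets me choose $x_0$ small so that the region near the origin carries a small fraction of the mass. The troublesome point is that the uniform estimate on $[0,x_0]$ involves $\int W(|\eta-t|)^{-2}t^{2\nu+1}\,dt$, which depends on $\eta$ and $a$. To avoid this I would instead bound $|j_\nu(x\xi)|\le C(x\xi)^{-\nu-1/2}$ on $Y$. This gives $|x^{\nu+1/2}f(x)|\le C\int_Y|G|\,d\xi$ restricted to $x\xi\ge1$. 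If that stays non-uniform too, I would absorb the region into the Fourier comparison, where the asymptotic still holds because $x\xi\ge xa$, leaving only $x\le C/a$. That region has $\mu_\nu$-mass tending to zero as $a\to\infty$, and there the $W$-weighted bound is used.
\end{itemize}

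Finally, I would combine the pieces:
\[
\|f\|_{L^2_\nu(S)}\ge\|F\|_{L^2(S\cap[x_0,\infty))}-\|E\|\ge c_1\|f\|-o_{a\to\infty}(1)\|f\|-\|F\|_{L^2([0,x_0])}.
\]
The last term must be small. I would handle it via Bernstein's bound $|F|\le\|G\|_{L^1}$, or better via local $L^2$ control: Fourier support in $Y\cup(-Y)$ with $Y$ porous gives $\|F\|_{L^2([0,x_0])}\lesssim x_0^{\kappa}\|F\|$ by the $\delta$-regularity counting of \cite[Lemma 2.8]{BD}. Failing that, I would shrink $S$ by removing $[0,x_0]$ and apply Proposition~\ref{propositionBD} with $I'$ adjusted.

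The main obstacle is making the Bessel remainder uniformly small in operator norm, uniformly in $h$ as well, since $Y$ has length $h^{-1}$ and crude Hilbert--Schmidt bounds lose a factor $h^{-1/2}$. I would try first the refined expansion with the oscillatory second term together with an $L^2$ almost-orthogonality (Cotlar--Stein) argument on $\xi$-dyadic pieces. Only if that fails would I let $C_0$ depend on $h$, which is what the $a\ge a_0h^{-1}$ hypothesis in Theorem~\ref{Th1.3} suggests is eventually allowed.
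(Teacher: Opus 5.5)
Your outline follows the paper's proof closely: Bessel asymptotics turn $x^{\nu+1/2}f$ into the cosine transform $F$ of $G=\xi^{\nu+1/2}\mathcal H_\nu f$, whose spectrum lies in $\pm Y$, and then Proposition~\ref{propositionBD} is applied. The genuine gap is the step from the whole-line conclusion of Proposition~\ref{propositionBD} to the half-line quantity $\|F\|_{L^2(S\cap\mathbb R_+)}$, on which your final chain rests. Your device ``extended evenly, with $S$ reflected'' fails.

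$F$ is not even: $F(-x)=\sqrt{2/\pi}\int_Y\cos(x\xi+\phi_\nu)G\,d\xi$, and this equals $F(x)$ only when $\sin\phi_\nu=0$ (e.g.\ $\nu=-1/2$). The even extension is $F(|x|)=\sqrt{2/\pi}\int_Y\bigl(\cos\phi_\nu\cos(x\xi)+\sin\phi_\nu\operatorname{sgn}(x)\sin(x\xi)\bigr)G\,d\xi$. The factor $\operatorname{sgn}(x)$ convolves the spectrum with a principal-value kernel, so $\operatorname{supp}\widehat{F(|\cdot|)}\not\subset\pm Y$ and Proposition~\ref{propositionBD} does not apply to it.

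If you keep $F$ itself on $\mathbb R$, Proposition~\ref{propositionBD} controls $\|F\|_{L^2(\mathbb R)}$ by $\|F\|_{L^2(S)}$ with $S\subset\mathbb R$. But $\int_{S\cap\mathbb R_-}|F|^2$ is a transform of $\mathcal H_\nu f$ with the wrong phase $+\phi_\nu$ (a Bessel-$Y$-type object), and it is not bounded by $\|f\|_{L^2_\nu(S)}$. The paper's Step~4 passes over the same point: it rewrites $\int_{S\setminus[-\varepsilon,\varepsilon]}|g|^2$ through the Bessel integral, which is valid only for $x>0$.

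One way to close the gap: write $F=(2\pi)^{-1/2}\bigl(e^{-i\phi_\nu}u(x)+e^{i\phi_\nu}u(-x)\bigr)$ with $u(x)=\int_Ye^{ix\xi}G(\xi)\,d\xi$, and set $S^+=S\cap\mathbb R_+$. Then
\[
\int_{S^+}|F|^2=\frac1{2\pi}\int_{S^+\cup(-S^+)}|u|^2+\frac1\pi\operatorname{Re}\Bigl(e^{-2i\phi_\nu}\int_{S^+}u(x)\overline{u(-x)}\,dx\Bigr).
\]
Proposition~\ref{propositionBD}, applied to the one-sided $u$ with the symmetrized set $S^+\cup(-S^+)$, bounds the first term below by $c_1^2\|f\|_{L^2_\nu}^2$.

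For the cross term, first note you may assume $\eta\ge a/2$; otherwise $\|f\|\le A\|f\|/W(a/2)$, which forces $f=0$ once $W(a/2)>A$. Write $u=e^{i\eta x}v$. The hypothesis gives $\|v\|_{H^1}\le C(A,W)\|f\|$, using $M_1=\sup_{t\ge1}t/W(t)$. Integrating $e^{2i\eta x}v(x)\overline{v(-x)}$ by parts on each interval of $S^+$ then gives $O(a^{-1})\|f\|^2$.

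Separately, the obstacle you single out is not real, and you should not fall back on letting $C_0$ depend on $h$, since that would not prove the lemma as stated. A kernel bounded by $C(x\xi)^{-1}$ on $[x_0,\infty)\times[a,\infty)$ has Hilbert--Schmidt norm at most $C(x_0a)^{-1/2}$, independent of $|Y|$.

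The paper uses an even simpler device that also removes your $\eta$-dependence worry near the origin. That worry came from working in $L^1_\nu$ as in Lemma~\ref{L33}. Instead, bound $|j_\nu(z)|\le Cz^{-\nu-1/2}$ and apply Cauchy--Schwarz against Lebesgue measure $d\xi$:
\[
\int_Y|G|\,d\xi\le\Bigl(\int_{\mathbb R}W(|s|)^{-2}\,ds\Bigr)^{1/2}\bigl\|\mathcal H_\nu f\,W(|\eta-\cdot|)\bigr\|_{L^2_\nu}\le C_W A\|f\|_{L^2_\nu}.
\]
This is uniform in $\eta,a,h$, because the $L^2(d\xi)$-norm of $W(|\eta-\cdot|)^{-1}$ is translation invariant. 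This single bound gives $\|f\|_{L^2_\nu([0,x_0])}^2\lesssim x_0$, $|F|\lesssim\|G\|_{L^1}$ on $[0,x_0]$, and $|E(x)|\lesssim(xa)^{-1}\|G\|_{L^1}$ for the remainder.
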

\begin{proof}
The proof we give below is inspired by the approach developed in \cite{BRa}. Let $d\mu_\nu(x) = x^{2\nu+1} dx$. We decompose the total norm $\|f\|_{L_\nu^2(\mathbb{R}_+)}^2$ into a small-$x$ part $I_1$ and a large-$x$ part $I_2$ using $\varepsilon > 0$:
\begin{align*}
\|f\|_{L_\nu^2(\mathbb{R}_+)}^2 &= \int_0^\varepsilon |f(x)|^2 d\mu_\nu(x) + \int_\varepsilon^\infty |f(x)|^2 d\mu_\nu(x) =: I_1 + I_2.
\end{align*}
Recall the inverse Hankel transform: $f(x) = \int_{Y} (xy)^{-\nu} J_\nu(xy) \mathcal{H}_\nu f(y) y^{2\nu+1} dy$.

\textbf{Step 1: Estimation of \(I_1\).}
We use the bound $|z^{-\nu}J_\nu(z)| \le C_{\nu,1} z^{-\nu-1/2}$, which holds for $\nu > -1/2$. Substituting this into $f(x)$:
\begin{align*}
|f(x)| &\le C_{\nu,1} \int_{Y} (xy)^{-\nu-1/2} |\mathcal{H}_{\nu} f(y)| y^{2\nu+1} dy \\
&= C_{\nu,1} x^{-\nu-1/2} \int_{Y} |\mathcal{H}_{\nu} f(y)| y^{\nu+1/2} dy.
\end{align*}
Squaring $|f(x)|$ and integrating over $d\mu_\nu(x) = x^{2\nu+1} dx$:
\begin{align*}
I_1 &\le \int_0^\varepsilon \left( C_{\nu,1} x^{-\nu-1/2} \int_{Y} |\mathcal{H}_{\nu} f(y)| y^{\nu+1/2} dy \right)^2 x^{2\nu+1} dx \\
&= C_{\nu,1}^2 \int_0^\varepsilon x^{-2\nu-1} \left( \int_{Y} |\mathcal{H}_{\nu} f(y)| y^{\nu+1/2} dy \right)^2 x^{2\nu+1} dx \\
&= C_{\nu,1}^2 \int_0^\varepsilon \left( \int_{Y} |\mathcal{H}_{\nu} f(y)| y^{\nu+1/2} dy \right)^2 dx = C_{\nu,1}^2 \varepsilon \left( \int_{Y} |\mathcal{H}_{\nu} f(y)| y^{\nu+1/2} dy \right)^2.
\end{align*}
Applying Cauchy-Schwarz with the weight $W(|\eta-y|)$ to the inner integral:
\begin{align*}
\left( \int_{Y} |\mathcal{H}_{\nu} f(y)| y^{\nu+1/2} dy \right)^2 &= \left( \int_{Y} |\mathcal{H}_{\nu} f(y)| y^{\frac{2\nu+1}{2}} W(|\eta-y|) \cdot \frac{1}{W(|\eta-y|)} dy \right)^2 \\
&\le \left( \int_{Y} |\mathcal{H}_{\nu} f(y)|^2 W(|\eta-y|)^2 y^{2\nu+1} dy \right) \left( \int_{Y} \frac{1}{W(|\eta-y|)^2} dy \right) \\
&\le A^2 \|f\|_{L_\nu^2(\mathbb{R}_+)}^2 \cdot C_w,
\end{align*}
where $C_w = \int_{\mathbb{R}} W(s)^{-2} ds < \infty$. Thus, $I_1 \le C_{\nu,1}^2 C_w A^2 \varepsilon \|f\|_{L_\nu^2(\mathbb{R}_+)}^2$. This estimate is independent of $\eta$, $a$ and $h$.

\textbf{Step 2: Estimation of \(I_2\).}
For $xy \ge \varepsilon a \gg 1$, we use the asymptotic expansion $J_\nu(r) = \sqrt{\frac{2}{\pi r}} \cos(r - \theta_\nu) + R_\nu(r)$, where $|R_\nu(r)| \le C_{\nu,2} r^{-3/2}$. Similar to Step 1, $I_2 \le 2(I_3 + I_4)$, where
\[I_3=\int_\varepsilon^{+\infty} \left| \int_{ Y} (xy)^{-\nu} \sqrt{\frac{2}{\pi xy}} \cos\left(xy - \frac{\nu\pi}{2} - \frac{\pi}{4}\right) \mathcal{H}_{\nu} f(y) \, y^{2\nu+1} \, dy \right|^2 x^{2\nu+1} \, dx,\]
\[I_4=\int_\varepsilon^{+\infty} \left| \int_{ Y} (xy)^{-\nu} R_\nu(xy) \mathcal{H}_{\nu} f(y) \, y^{2\nu+1} \, dy \right|^2 x^{2\nu+1} \, dx.\]

For the remainder $I_4$:
\begin{align*}
I_4 &\le \int_\varepsilon^\infty \left( \int_Y C_{\nu,2} (xy)^{-\nu-3/2} |\mathcal{H}_\nu f(y)| y^{2\nu+1} dy \right)^2 x^{2\nu+1} dx \\
&\le C_{\nu,2}^2\int_\varepsilon^\infty x^{-2\nu-3} \left( \int_Y |\mathcal{H}_\nu f(y)| y^{\nu-1/2} dy \right)^2 x^{2\nu+1} dx \\
&\le \frac{C_{\nu,2}^2}{\varepsilon a^2} \|f\|_{L^2_\nu}^2.
\end{align*}

\textbf{Step 3: Fourier-type Integral \(I_3\).}
Substituting $y = a + s$ into the expression for $I_3$, we obtain
\begin{equation*}
I_3 = \sqrt{\frac{8}{\pi}} \int_\varepsilon^{+\infty} \left| \int_{Y'} \cos(x(a+s) - \theta) \, h(s) \, ds \right|^2  \, dx,
\end{equation*}
where $Y' = Y-a$, $\theta = \frac{\nu\pi}{2} + \frac{\pi}{4}$, and $h(s) = \mathcal{H}_{\nu} f(a+s) \, (a+s)^{\nu+\frac{1}{2}}$. We define the kernel function
\begin{equation*}
g(x) = \int_{Y'} \cos(x(a+s) - \theta) \, h(s) \, ds.
\end{equation*}
Applying the expansion $\cos(xa-\theta+xs) = \cos(xa-\theta)\cos(xs) - \sin(xa-\theta)\sin(xs)$, $g(x)$ can be represented as:
\begin{align*}
g(x) &= \cos(xa-\theta) \int_{Y'} h(s) \cos(xs) \, ds - \sin(xa-\theta) \int_{Y'} h(s) \sin(xs) \, ds \\
&= \frac{e^{i(xa-\theta)}}{2} H(x) + \frac{e^{-i(xa-\theta)}}{2}  H(-x),
\end{align*}
where $H(x) = \int_{Y'} h(s) e^{ixs} \, ds$ is the Fourier transform of $h(s)$. Given that $h(s)$ is supported on $Y'$, the Fourier support of $H(x)$ is $Y'$. By the translation properties of the Fourier transform, the support of $\hat{g}$ is contained in $(Y' + a) \cup (-Y' - a)$. Since $Y' + a = Y$, we conclude that
\begin{equation*}
\operatorname{supp} \hat{g} \subset Y \cup (-Y).
\end{equation*}
Thus by Proposition \ref{propositionBD}, we have
\begin{equation}
\|g\|_{L^2(\mathbb{R})}^2 \le C^*(\delta, C_R, \lambda) \|g\|_{L^2(S)}^2.
\end{equation}
Since $I_3=\sqrt{\frac{8}{\pi}}\int_\varepsilon^{+\infty} |g(x)|^2dx$, we get
\[
I_3 \le \sqrt{\frac{8}{\pi}}\|g\|_{L^2(\mathbb{R})}^2 \le\sqrt{\frac{8}{\pi}} C^{*} \|g\|_{L^2(S)}^2.
\]
\textbf{Step 4: Estimating \(\|g\|_{L^2(S)}\).}
We split $\|g\|_{L^2(S)}^2 = \int_{S \cap [-\varepsilon, \varepsilon]} |g|^2 dx + \int_{S \setminus [-\varepsilon, \varepsilon]} |g|^2 dx =: I_5 + I_6$.
$I_5 \le \varepsilon A^2 C_w \|f\|^2$ using the same technique as in $I_1$. For $I_6$, reversing the expansion and recalling the relation between the cosine term and the Bessel function $J_\nu$ via the remainder $R_\nu(xy)$, we write:
\begin{align*}
I_6 &= \frac{\pi}{2} \int_{E \setminus [-\varepsilon, \varepsilon]} \left| \int_0^{\infty} \sqrt{\frac{2}{\pi xy}} \cos(xy-\theta) (xy)^{-\nu} \mathcal{H}_{\nu} f(y) \, y^{2\nu+1} \, dy \right|^2 x^{2\nu+1} \, dx \\
&\leq \pi \int_{E \setminus [-\varepsilon, \varepsilon]} \left| \int_0^{\infty} J_\nu(xy) (xy)^{-\nu} \mathcal{H}_{\nu} f(y) \, y^{2\nu+1} \, dy \right|^2 x^{2\nu+1} \, dx \\
&\quad + \pi \int_{E \setminus [-\varepsilon, \varepsilon]} \left| \int_0^{\infty} (xy)^{-\nu} R_\nu(xy) \mathcal{H}_{\nu} f(y) \, y^{2\nu+1} \, dy \right|^2 x^{2\nu+1} \, dx \\
&=: \pi (I_7 + I_8).
\end{align*}

For the term $I_7$, the inverse formula of the Hankel transform $\mathcal{H}_\nu$ yields:
\[
I_7 \le \|\mathcal{H}_\nu(\mathcal{H}_\nu f)\|_{L_\nu^2(S)}^2 = \|f\|_{L_\nu^2(S)}^2.
\]

For $I_8$, by analogy with the bound derived for $I_4$, there exists a constant $C_{\nu,2}$ such that:
\[
I_8 \le \frac{C_{\nu,2}^2}{\varepsilon a^2} \|f\|_{L_\nu^2(\mathbb{R}_+)}^2,
\]
thus
\begin{equation}
I_6 \le \pi\|f\|_{L_\nu^2(S)}^2 + \frac{\pi C_{\nu,2}^2}{\varepsilon a^2} \|f\|_{L_\nu^2(\mathbb{R}_+)}^2.
\end{equation}

\textbf{Step 5: Final Result.}
Summing up the above inequalities and setting $\varepsilon = 1/a$, we obtain:
\[ \|f\|_{L_\nu^2}^2 \le \frac{1}{a}\left( A^2 C_w(C_{\nu,1}^2+\frac{4\sqrt{2}}{\sqrt{\pi}}C^{*}) + 8\sqrt{\pi}C_{\nu,2}^2C^*\right) \|f\|_{L_\nu^2}^2 + 4\sqrt{2\pi}C^{*} \|f\|_{L_\nu^2(S)}^2. \]
 By setting $C_0 = 2 \left( A^2 C_w \left( C_{\nu,1}^2 + \frac{4\sqrt{2}}{\sqrt{\pi}} C^* \right) + 8\sqrt{\pi} C_{\nu,2}^2C^* \right)$, it follows that for $a \ge C_0$, the first term on the right-hand side can be absorbed into the left-hand side, yielding:
 \[ 
 \|f\|_{L_\nu^2(S)}^2 \ge \frac{1}{8\sqrt{2\pi}C^*} \|f\|_{L_\nu^2(\mathbb{R}_+)}^2. 
 \]
\end{proof}
Now, with Lemma \ref{L5.2} in hand, we are ready to prove Proposition \ref{proposition4.1}.
\begin{proof}[Proof of Proposition \ref{proposition4.1}]
We follow the strategy of \cite[Theorem 5.2]{BJ} (see also \cite[Theorem 3.4]{CA1}). Consider the lattice \(\Lambda=c_2\mathbb{N}\). The factor is chosen so that the \(c_2\)-neighbourhood of \(\Lambda\) covers \(\mathbb{R}_+\). For each $\eta\in\Lambda$, Remark~\ref{rem:centered-damping} yields a damping function $\psi_\eta$ for $Y$ centered at $\eta$; namely,
\[
\begin{aligned}
&\operatorname{supp}\psi_\eta\subset B_{\lambda/4}(0),\\
&|\mathcal{H}_{\nu}\psi_\eta(\xi)|\ge c_2\quad\text{for }\xi\in B_{c_2}(\eta),\\
&|\mathcal{H}_{\nu}\psi_\eta(\xi)|\le\langle\xi-\eta\rangle^{-1}\quad\text{for all }\xi,\\
&|\mathcal{H}_{\nu}\psi_\eta(\xi)|\le 1/W(|\xi-\eta|)\quad\text{for }\xi\in Y.
\end{aligned}
\]
Define \(f_\eta = f *_\nu \psi_\eta\). By Young's inequality and the Plancherel theorem,
\begin{equation}\label{EEE6}
\sum_{\eta\in\Lambda}\|f_\eta\|_{L^2_\nu}^2
\ge\sum_{\eta\in\Lambda}\int |\mathcal{H}_{\nu} f(\xi)|^2|\mathcal{H}_{\nu}\psi_\eta(\xi)|^2 w(\xi)d\xi
\ge c_2^2\|f\|_{L^2_\nu}^2. 
\end{equation}

We call \(\eta\in\Lambda\) \emph{good} if
\begin{equation}\label{EGOOD}
\bigl\|W(|\eta-\cdot|)^{1/2}\mathcal{H}_{\nu} f_\eta\bigr\|_{L^2_\nu}\le A\|f_\eta\|_{L^2_\nu},
\end{equation}
and \emph{bad} otherwise; here \(A\) is a constant to be chosen later. For bad indices we use the pointwise bound \(|\mathcal{H}_{\nu}\psi_\eta(\xi)|\le1/W(|\xi-\eta|)\) to obtain
\[
\|f_\eta\|_{L^2_\nu}^2\le\frac1A^2\bigl\|W(|\eta-\cdot|)^{-1/2}\mathcal{H}_{\nu} f\bigr\|_{L^2_\nu}^2.
\]
Summing over bad \(\eta\) and using that the weight \(W\) grows superpolynomially, we get
\begin{equation}\label{EEE7}
\sum_{\text{bad }\eta}\|f_\eta\|_{L^2_\nu}^2\le\frac{1}{A^2}\,C(W,c_2)\,\|f\|_{L^2_\nu}^2.  
\end{equation}

Now let \(E\subset S\) be the union of the intervals of side \(\lambda/2\) obtained by shrinking each \(I'\) by a factor \(1/2\) around its centre. For a good index \(\eta\), since \(\operatorname{supp} \mathcal{H}_{\nu} f_\eta\subset Y\) whenever \(\operatorname{supp} \mathcal{H}_{\nu} f\subset Y\). By Lemma \ref{L5.2}, there exists a constant $a_0(\delta, W, A,\lambda,\nu)$ such that for any $a\geq a_0$ and any \(f\in L^2_\nu(\mathbb{R}_+)\) with \(\operatorname{supp} \mathcal{H}_{\nu} f\subset Y\), we have
\[
\|f_\eta\|_{L^2_\nu}\le C(\delta,\lambda)\,\|f_\eta\mathbf{1}_E\|_{L^2_\nu}.
\]

Since \(\operatorname{supp}\psi_\eta\subset B_{\lambda/4}(0)\), we have \(f_\eta\mathbf{1}_E = (f\mathbf{1}_S)*_\nu\psi_\eta\) on \(E\). In fact, we can write \[
f_\eta= (f\mathbf{1}_S)*_\nu\psi_\eta+ (f\mathbf{1}_{R_+\backslash S})*_\nu\psi_\eta,
\]
and
\begin{align*}
&(f\mathbf{1}_{R_+\backslash S})*_\nu\psi_\eta\\
&=c_\nu \int_{R_+}\int_{[0,\pi]^d}f(\sqrt{x^2+y^2+2xy\cos \theta})\mathbf{1}_{R_+\backslash S}(\sqrt{x^2+y^2+2xy\cos \theta})\psi_\eta(y)(\sin \theta)^{2\nu}d\theta d\mu_\nu(y).
\end{align*}
When \(x\in E\) and \(y\in B_{\lambda/4}\), then 
\[
|x-y|\leq \sqrt{x^2+y^2+2xy\cos \theta}\leq x+y
\]
so \(\sqrt{x^2+y^2+2xy\cos \theta}\in S\), implying that 
\[
(f\mathbf{1}_{R_+\backslash S})*_\nu\psi_\eta=0.
\]
Therefore
\[
\Vert f_\eta 1_E\Vert_{L_\nu^2}\leq \Vert (f1_S)*_\nu\psi_\eta\Vert_{L_\nu^2}
=\Vert \mathcal{H}_{\nu}(f1_S)\mathcal{H}_{\nu}\psi_\eta\Vert_{L_\nu^2}
\]

Sum over good indices and notice that \(|\mathcal{H}_{\nu}\psi_\eta(\xi)|\leq \langle \xi-\eta\rangle^{-1}\), then 
\[
\sum_{\text{good }\eta}\|f_\eta\|_{L^2_\nu}^2\leq C(\delta,\lambda)^2 \sum_{\text{good }\eta}\Vert \mathcal{H}_{\nu}(f1_S) \mathcal{H}_{\nu}\psi_\eta\Vert_{L_\nu^2}^2\leq C(\delta,\lambda)^2\Vert \mathcal{H}_{\nu}(f1_S)\Vert_{L_\nu^2}^2=C(\delta,\lambda)^2\Vert  f1_S\Vert_{L_\nu^2}^2.
\]

Using \eqref{EEE6} and \eqref{EEE7} we obtain
\[
\|f\|_{L^2_\nu}^2\le \frac{1}{c_2^2}\sum_{\text{good }\eta}\|f_\eta\|_{L^2_\nu}^2 +\frac{C(W,c_2)}{A^2}\|f\|_{L^2_\nu}^2
\le \frac{C(\delta,\lambda)^2}{c_2^2}\|f\mathbf{1}_S\|_{L^2_\nu}^2 +\frac{C(W,c_2)}{A^2}\|f\|_{L^2_\nu}^2.
\]
Setting \(A= \sqrt{2C(W,c_2)}\), we get
\[
\|f\|_{L^2_\nu}\le \frac{2C(\delta,\lambda)^2}{c_2^2}\|f\mathbf{1}_S\|_{L^2_\nu}.
\]
Since $c_2$ and $W$ depend only on $\delta$, $\lambda$ and $\nu$, we get the desired assertion.
\end{proof}
\subsection{Completion of the Proof of Theorem \ref{Th1.3} via an Iterative Approach}\label{sec:iteration}
First, we fix an even, nonnegative Schwartz function $\phi$ on $\mathbb{R}_+$ such that $\operatorname{supp} \mathcal{H}_{\nu} \phi \subset [0,1]$ and $\int_{\mathbb{R}_+} \phi(x) x^{2\nu+1} dx = 1$ (the existence of such a function is well-known). Let $T > 0$ be an integer to be determined later, and define the scaled function
\[
\psi(x) = 2^{2(\nu+1)T} \phi(2^T x).
\]
For each $j \ge 0$, let
\[
\psi_j(x) = 2^{2(\nu+1)j} \psi(2^j x) = 2^{2(\nu+1)(j+T)} \phi(2^{j+T} x).
\]
Now, we define the weight functions $\Psi_n$ for $n \ge 0$ as
\[
\Psi_n = \psi_n *_\nu \mathbf{1}_{X + [0, 2^{-n-T/2})}.
\]
There exists a constant $C_{\phi,\nu}$ depending only on $\phi$ and $\nu$ such that for all $n \ge 0$:
\begin{align}
    \Psi_n(x) &\ge 1 - C_{\phi,\nu} 2^{-T} && \text{for } x \in X, \label{E52} \\
    \Psi_n(x) &\le C_{\phi,\nu} 2^{-T} && \text{for } x \in \mathbb{R}_+ \text{ with } d(x, X) \ge 5 \cdot 2^{-n-T/2}. \label{E53}
\end{align}
In what follows, we provide a brief verification of \eqref{E53}; the argument for \eqref{E52} follows similarly. Using the definition of the Hankel convolution, we have
\begin{align*}
\Psi_n(x) &= \psi_n *_\nu \mathbf{1}_{X + [0, 2^{-n-T/2})} \\
&= c_\nu \int_{\mathbb{R}_+} \int_{0}^{\pi} \mathbf{1}_{X+[0,2^{-n-T/2})}\left(\sqrt{x^2+t^2-2xt\cos \theta}\right)\psi_n (t) (\sin \theta)^{2\nu} d\theta \, d\mu_\nu(t).
\end{align*}

When $x \in \mathbb{R}_+$ satisfies $d(x,X) \ge 5 \cdot 2^{-n-T/2}$ and $t \in  [0, 2^{-n-T/2})$, it follows that $|x-t| \ge 4 \cdot 2^{-n-T/2}$. Since $\sqrt{x^2+t^2-2xt\cos \theta} \ge |x-t|$,   we obtain
\[
\Psi_n(x) \le \int_{\{t\ge 2^{-n-T/2}\}} \psi_n(t)\,d\mu_\nu(t)
=\int_{\{t\ge 2^{T/2}\}} \phi(t)\,d\mu_\nu(t)\leq C_{\phi,\nu}2^{-T}.
\]
This completes the verification. Next, consider any function $f \in L^2_\nu(\mathbb{R}_+)$ satisfying $\operatorname{supp} \mathcal{H}_\nu f \subset Y$. We define the sequence of functions $\{f_m\}_{m \ge 0}$ as follows:
\[
f_m := \left( \prod_{j=0}^{m-1} \Psi_{jT} \right) f \quad \text{for } m \ge 1, \quad \text{and} \quad f_0 := f.
\]
By Lemma~\ref{L15} and a simple induction argument, it can be shown that
\[
\operatorname{supp} \mathcal{H}_\nu f_m \subset \left( Y + \left[ -2^{mT+1}, 2^{mT+1} \right] \right) \cap \mathbb{R}_+.
\]
Let $m_0$ be the largest integer such that $2^{-m_0 T} \ge h$, which is given by
\[
m_0 = \left\lfloor \frac{\log_2 h^{-1}}{T} \right\rfloor.
\]
\begin{lemma}\label{L544}
 There exists \(\gamma_0 = \gamma_0(\delta, C_R, \nu)\in(0,1)\) such that for every \(T\ge T_0(\delta, C_R, \nu)\) and sufficiently large $a(\delta, C_R, \nu)h^{-1}>0$,
\[
\|f_{m+1}\|_{L^2_\nu} \le (1-\gamma_0)\|f_m\|_{L^2_\nu},\qquad 0\le m\le m_0.
\]
\end{lemma}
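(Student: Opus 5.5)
This follows the Bourgain--Dyatlov iteration: rescale, apply Proposition~\ref{proposition4.1} at unit scale, then pull the bound back. Fix $m$ with $0\le m\le m_0$ and write $f_{m+1}=\Psi_{mT}f_m$. The weight $\Psi_{mT}$ is essentially $1$ on $X$ and essentially $0$ away from an $O(2^{-mT-T/2})$-neighbourhood of $X$. So it suffices to show that a fixed fraction of the $L^2_\nu$-mass of $f_m$ lies in a set $S_m$ on which $\Psi_{mT}\le C_{\phi,\nu}2^{-T}$. Since $0\le\Psi_{mT}\le 1$ (as $\phi\ge0$ with unit mass and $T^\nu_y$ is positivity preserving with $T^\nu_y1=1$), we then get
\[
\|f_{m+1}\|^2\le \|f_m\|^2-(1-C^2_{\phi,\nu}2^{-2T})\|f_m\mathbf 1_{S_m}\|^2 ,
\]
and a lower bound $\|f_m\mathbf 1_{S_m}\|\ge c\|f_m\|$ with $c$ independent of $m,h,a$ gives $\gamma_0$.

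\textbf{Constructing $S_m$ (porosity of $X$).} The set $X\subset[0,1]$ is $\delta$-regular on scales $h$ to $1$ with $\delta<1$, hence porous (\cite[Lemma 2.1]{BD}). Thus for $T\ge T_0$ every interval of length $2^{-mT}$ in $[0,1]$ with $2^{-mT}\ge h$ contains a subinterval of length $\lambda 2^{-mT}$, with $\lambda=\lambda(\delta,C_R)$, at distance $\ge 5\cdot2^{-mT-T/2}$ from $X$. Intervals outside $[0,1]$ are handled trivially, since $X\subset[0,1]$. Call the union of these subintervals $S_m$, so $\Psi_{mT}\le C_{\phi,\nu}2^{-T}$ on $S_m$ by \eqref{E53}.

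\textbf{Rescaling and applying Proposition~\ref{proposition4.1}.} Set $g(x)=f_m(2^{-mT}x)$. The dilation $x\mapsto 2^{-mT}x$ is, up to normalization, unitary on $L^2_\nu$ and dilates the Hankel transform inversely. Hence $\operatorname{supp}\mathcal H_\nu g\subset 2^{-mT}\bigl(Y+[-2^{mT+1},2^{mT+1}]\bigr)\cap\mathbb R_+$, and $2^{mT}S_m$ has the structure $S$ required in Proposition~\ref{proposition4.1}. The rescaled spectral set lies in $[2^{-mT}a-2,\,2^{-mT}(a+h^{-1})+2]$. It is regular at scales $1$ to $2^{-mT}h^{-1}$ with constant depending on $C_R$ only: the fattening by $O(2^{mT})$ merely replaces $Y$ at scale $2^{mT}$ by a neighbourhood, which preserves porosity up to constants. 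Porosity is really what Proposition~\ref{propositionBD} uses, and that proposition only needs regularity on scales $\ge1$. The proposition then applies provided $2^{-mT}a-2\ge a_0$. Since $2^{-mT}\ge h$ for $m\le m_0$, this holds once $a\ge (a_0+2)h^{-1}$, which explains the hypothesis $a\ge a(\delta,C_R,\nu)h^{-1}$. We obtain $\|g\|\le C\|g\mathbf 1_{2^{mT}S_m}\|$, and scaling back gives $\|f_m\mathbf 1_{S_m}\|\ge C^{-1}\|f_m\|$. Combined with the first step, $\gamma_0=1-\sqrt{1-C^{-2}/2}$ works once $T$ is large enough that $C^2_{\phi,\nu}2^{-2T}\le1/2$.

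\textbf{Main obstacle.} The delicate point is the uniformity of the constants in Proposition~\ref{proposition4.1} (through Lemma~\ref{L5.2} and the damping functions) for the fattened, rescaled spectral set. One must check that the $\delta$-regularity of $Y$, which is lost strictly after fattening, can be replaced by porosity with constants independent of $m$. One must also check that $T$ large does not interfere, since the fattening width after rescaling is the fixed constant $2$. I would handle this by restating Proposition~\ref{propositionBD} and Lemma~\ref{lem:damping} for $\delta$-porous sets, as in \cite{CA1}, and verifying that the neighbourhood of a porous set at scale $\ge1$ remains porous on scales $\ge C$.
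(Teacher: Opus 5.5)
Your proposal is correct and follows essentially the same route as the paper. Porosity of $X$ at scale $2^{-mT}$ produces a set $\mathscr S$ on which $\Psi_{mT}$ is small, and the bound $0\le\Psi_{mT}\le 1$ then gives $\|f_{m+1}\|_{L^2_\nu}^2\le\|f_m\|_{L^2_\nu}^2-\frac12\|f_m\mathbf 1_{\mathscr S}\|_{L^2_\nu}^2$. Rescaling by $2^{mT}$ and applying Proposition~\ref{proposition4.1} yields the uniform lower bound on $\|f_m\mathbf 1_{\mathscr S}\|_{L^2_\nu}$, where the threshold $2^{-mT}a-2\ge a_0$ forces $a$ to be a large multiple of $h^{-1}$.

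The uniformity issue you flag for the fattened set $2^{-mT}Y+[-2,2]$ is applied without comment in the paper, and it is milder than you fear. The $r$-neighbourhood of a $\delta$-regular set, with $r$ at least the bottom scale, is again $\delta$-regular from scales comparable to $r$ up to the top scale, with constant depending only on $\delta$ and $C_R$. So Proposition~\ref{proposition4.1} applies after adjusting constants, and no porous restatement is needed.
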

Assuming the validity of Lemma~\ref{L544} for the moment, we now complete the proof of Theorem~\ref{Th1.3}. By iteratively applying Lemma~\ref{L544} $m_0$ times, we obtain
\begin{equation}\label{eq:5.3}
\|f_{m_0}\|_{L^2_\nu} \le (1-\gamma_0)^{m_0} \|f\|_{L^2_\nu}. 
\end{equation}
Conversely, the lower bound \eqref{E52} implies that
\begin{equation}\label{eq:5.4}
\|f_{m_0}\|_{L^2_\nu} = \biggl\| \biggl( \prod_{j=0}^{m_0-1} \Psi_{jT} \biggr) f \biggr\|_{L^2_\nu} \ge (1 - C_{\phi,\nu}2^{-T})^{m_0} \|f\mathbf{1}_X\|_{L^2_\nu}. 
\end{equation}
Combining \eqref{eq:5.3} and \eqref{eq:5.4}, we find
\begin{equation*}
\|f\mathbf{1}_X\|_{L^2_\nu} \le \left( \frac{1-\gamma_0}{1-C_{\phi,\nu}2^{-T}} \right)^{m_0} \|f\|_{L^2_\nu}.
\end{equation*}
We now choose $T$ sufficiently large such that $1-C_{\phi,\nu}2^{-T} \ge 1-\gamma_0/2$. It then follows that
\begin{equation*}
\frac{1-\gamma_0}{1-C_{\phi,\nu}2^{-T}} \le \frac{1-\gamma_0}{1-\gamma_0/2} \le 1 - \frac{\gamma_0}{2}.
\end{equation*}
Recalling that $m_0 \ge \frac{1}{T}\log_2 h^{-1} - 1$, we finally arrive at the estimate
\begin{equation*}
\|f\mathbf{1}_X\|_{L^2_\nu} \le C h^{\beta} \|f\|_{L^2_\nu},
\end{equation*}
where $\beta = \frac{1}{T} \log_2 (1-\frac{\gamma_0}{2})^{-1}$ (or specifically, $\beta \approx \frac{\gamma_0}{2T \ln 2}$) and the constant $C$ depends on $\gamma_0, C_{\phi,\nu}$, and $T$. This concludes the proof of Theorem~\ref{Th1.3}.
\par It remains to prove Lemma~\ref{L544}.
\begin{proof}[Proof of Lemma \ref{L544}]
By \cite[Proposition~2.10]{DYAS}, the set \(X\) is
\(\delta'\)-porous on scales from \(Ch\) to \(1\), where
\(\delta'>0\) and \(C>0\) depend only on \(\delta\) and \(C_R\);
see \cite[Definition~2.7]{DYAS} for the precise definition of
porosity. For each relevant \(m\), let \(\mathscr I_{mT}\) denote
the collection of intervals of length \(C2^{-mT}\) partitioning
\(\mathbb R_+\). Since \(C2^{-mT}\in[Ch,1]\), for every
\(Q\in\mathscr I_{mT}\), there exists an interval \(J_Q\subset Q\)
such that
\[
|J_Q|=\delta'|Q|=\delta' C2^{-mT},
\qquad J_Q\cap X=\varnothing.
\]
Letting \(x_Q\) be the midpoint of \(J_Q\), we obtain
\[
\operatorname{dist}(x_Q,X)
   \geq \frac{\delta'}{2}|Q|
   =\frac{\delta'C}{2}\,2^{-mT}.
\]
\[
\operatorname{dist}(x_Q, X) \ge C2^{-mT}\delta'.
\]
Set \(\lambda = C2^{-mT}\delta'\) and define the smaller intervals $I_Q = (x_Q + [-\lambda/2, \lambda/2]) \cap \mathbb{R}_+$. Let \(\mathscr{S} = \bigcup_{Q \in \mathscr{I}_{mT}} I_Q\). It follows that \(\operatorname{dist}(\mathscr{S}, X) \ge C2^{-mT}\delta'/2\). 
 For sufficiently large $T$ (depending on $\delta, C_R, \nu$), we ensure by \eqref{E53} that
\begin{equation}\label{Estimate5.3_val}
\Psi_{mT}(x) \le \frac{1}{2} \quad \text{for } x \in \mathscr{S}.
\end{equation}
Consequently, by \eqref{Estimate5.3_val} and the triangle inequality, we have
\begin{equation}\label{E5.4}
\begin{aligned}
\|f_{m+1}\|_{L^2_\nu}^2 &= \|\Psi_{mT}f_m\|_{L^2_\nu}^2 \le \|\Psi_{mT}\|_{\infty}^2\|f_m\mathbf{1}_{\mathscr{S}^c}\|_{L^2_\nu}^2+ \|\Psi_{mT}\mathbf{1}_{\mathscr{S}}\|_{\infty}^2\|f_m\mathbf{1}_{\mathscr{S}}\|_{L^2_\nu}^2 \\
&\le \left( \|f_m\|_{L^2_\nu}^2 - \|f_m \mathbf{1}_{\mathscr{S}}\|_{L^2_\nu}^2\right) + \frac{1}{2} \|f_m \mathbf{1}_{\mathscr{S}}\|_{L^2_\nu}^2 \\
&= \|f_m\|_{L^2_\nu}^2 - \frac{1}{2} \|f_m \mathbf{1}_{\mathscr{S}}\|_{L^2_\nu}^2.
\end{aligned}
\end{equation}
Rescaling $f_m$, we set \(f_m^{\text{resc}}(x) = f_m(2^{-mT}x)\) and \(\mathscr{S}^{\text{resc}} = 2^{mT}\mathscr{S}\). We have
\[
\operatorname{supp} \mathcal{H}_{\nu} f_m^{\text{resc}} \subset \left(2^{-mT}Y + [-2, 2]\right)\cap \mathbb{R}_+.
\]
Since $2^{-mT}\geq h$, applying Proposition~\ref{proposition4.1} to \(f_m^{\text{resc}}\) for $a\geq C(\delta, C_R, \nu)h^{-1}$, we obtain a constant $c = c( \delta, C_R, \nu) > 0$ such that, after rescaling back,
\begin{equation}\label{E5.3}
\|f_m\mathbf{1}_{\mathscr{S}}\|_{L^2_\nu} \ge c \|f_m\|_{L^2_\nu},
\end{equation}
which implies 
\[ \|f_{m+1}\|_{L^2_\nu}^2\leq \left(1-\frac{c( \delta, C_R ,\nu)^2}{2}\right)\|f_m\|_{L^2_\nu}^2\]
as desired.
\end{proof}
\section*{Acknowledgments}
Z. Duan was supported by the National Natural Science Foundation of China under grants 61671009 and 12171178. ChatGPT Plus 5.6 Sol was also used for minor editing and proofreading throughout the paper.

\noindent \textbf{Authors' addresses:}

\medskip
\noindent Xingyu Zhao, School of Mathematics and Statistics, Huazhong University of Science and Technology, Wuhan 430074, P.R. China. \\
Email address: \texttt{zhaoxingyu@hust.edu.cn}

\medskip
\noindent Longben Wei (Corresponding author), School of Mathematical Sciences, Guizhou Normal University, Guiyang 550025, P.R. China. \\
Email address: \texttt{longbenwei51@gmail.com}

\medskip
\noindent Zhiwen Duan, School of Mathematics and Statistics, Huazhong University of Science and Technology, Wuhan 430074, P.R. China. \\
Email address: \texttt{duanzhw@hust.edu.cn}
\end{document}